\numberwithin{equation}{section}
\newcommand{\SO}{\operatorname{SO}}
\renewcommand{\O}{\operatorname{O}}
\newtheorem{theorem}{Theorem}
\newtheorem{thm}[theorem]{Theorem}
\newtheorem{lemma}[theorem]{Lemma}
\newtheorem{lem}[theorem]{Lemma}
\newtheorem{proposition}[theorem]{Proposition}
\theoremstyle{remark}
\newtheorem*{remark}{Remark}
\newtheorem*{remarks}{Remarks}
\theoremstyle{definition}
\numberwithin{theorem}{section} \numberwithin{equation}{section}
\newcommand{\ord}{\text {\rm ord}}
\newcommand{\R}{\mathbb{R}}
\newcommand{\C}{\mathbb{C}}
\newcommand{\Q}{\mathbb{Q}}
\newcommand{\Z}{\mathbb{Z}}
\newcommand{\N}{\mathbb{N}}
\newcommand{\SL}{{\text {\rm SL}}}
\newcommand{\GL}{{\text {\rm GL}}}
\newcommand{\re}{\textnormal{Re}}
\newcommand{\im}{\textnormal{Im}}
\def\H{\mathbb{H}}
\begin{document}
\title[Universal sums of $m$-gonal numbers]{Universal sums of $m$-gonal numbers}
\author{Ben Kane}
\address{\rm Mathematics Department, University of Hong Kong, Pokfulam, Hong Kong}
\email{bkane@hku.hk}
\author{Jingbo Liu}
\address{\rm Mathematics Department, University of Hong Kong, Pokfulam, Hong Kong}
\email{jliu02@hku.hk}
\address{Current Address: \rm Mathematics Department, Texas A\&M University, San Antonio, Texas, 78224, USA}
\email{jliu@tamusa.edu}
\date{\today}
\subjclass[2010] {11F37, 11F11, 11E45}

\keywords{sums of polygonal numbers, modular forms, lattice theory, 15-theorem}

\medskip
\begin{abstract}
In this paper we study universal quadratic polynomials which arise as sums of polygonal numbers.  Specifically, we determine an asymptotic upper bound (as a function of $m$) on the size of the set $S_m\subset\N$ such that if a sum of $m$-gonal numbers represents $S_m$, then it represents $\N$.
\end{abstract}

\thanks{  The research of the first author was supported by grants from the Research Grants Council of the Hong Kong SAR, China (project numbers HKU 27300314, 17302515, 17316416, 17301317, and 17303618). Most of the research was conducted while the second author was a postdoctoral fellow at The University of Hong Kong.
}
\maketitle

\section{Introduction And Statement Of Results}\label{sec:intro}
The Conway--Schneeberger Fifteen theorem states that a given positive definite integral quadratic form is \begin{it}universal\end{it} (i.e., represents every positive integer) if and only if it represents the integers up to $15$ (a smaller subset of these numbers actually suffices).  In particular, the sums of squares
\[
Q(x)=\sum_{j=1}^{n} a_j x_j^2
\]
are universal if and only if they represent every integer up to $15$.  It was shown in \cite{BosmaKane} that a sum
\[
f(x)=\sum_{j=1}^n a_j T_{x_j}
\]
of triangular numbers $T_{x_j}:=\frac{x_j(x_j+1)}{2}$ is universal if and only if it represents every integer up to $8$.  In this paper, we are interested in generalizing this question to consider sums
\begin{equation}\label{eqn:sumsmgonal}
f(x)=\sum_{j=1}^{n} a_j P_{m}(x_j)
\end{equation}
of \begin{it}(generalized) $m$-gonal numbers\end{it}
\[
P_m(x_j):=\frac{(m-2)x_j^2 - (m-4)x_j}{2},
\]
where these coefficients $a_j$s are positive integers and these $x_j$s are chosen from $\mathbb{Z}$.

Constructing the possible universal sums of generalized $m$-gonal numbers by using an escalator tree method of Bhargava, one can see that there exists a (unique, minimal) $\gamma_m\in\N$ such that if every positive integer less than or equal to $\gamma_m$ is represented by $f$, then $f$ is universal. This is because there are only finitely many nodes appearing on the tree and $\gamma_m$ is actually the largest value of the truants (the smallest natural number not represented) of these nodes; see Lemma \ref{lem:finitegammam} for more details. Bosma and the first author \cite{BosmaKane} have shown that $\gamma_3=8$. When $m=4$, we can deduce that $\gamma_4=15$ from the Conway--Schneeberger Fifteen Theorem and the fact that the quaternary quadratic form $x^2+2x^2+5x^2+5x^2$ represents every positive integer except for $15$. For $m=5$, Ju \cite{Ju} recently showed that $\gamma_5=109$. Since each hexagonal number can be written as a triangular number and vice versa, we conclude for the $m=6$ case that $\gamma_6=\gamma_3=8$. The $m=8$ case has been resolved by Ju and Oh \cite{JuOh}, who proved that $\gamma_8=60$. Having established a number of individual cases, it is then natural to ask about the growth of $\gamma_m$ as a function of $m$.
\begin{theorem}\label{thm:main}
\begin{enumerate}[leftmargin=*,label={\rm(\arabic*)}]
\item
For $m\geq 3$ and every $\varepsilon>0$, there exists an absolute (effective) constant $C_{\varepsilon}$ such that
\[
\gamma_m\leq C_{\varepsilon}m^{7+\varepsilon}.
\]
\item There is no uniform upper bound which holds for all $m$. Specifically, if $m\geq 6$, then
\[
\gamma_m\geq m-4
\]
and for every element $\ell\in\N$ there exists a sum of generalized polygonal numbers which represents every nonnegative integer except for $\ell$.
\end{enumerate}
\end{theorem}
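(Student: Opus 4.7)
Parts (1) and (2) are tackled by very different techniques: Part (1) is a modular-forms upper bound, while Part (2) follows from explicit constructions.

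\textbf{Part (1).} By Lemma \ref{lem:finitegammam} it suffices to bound the truant at each of the finitely many nodes $f=\sum_{j=1}^{n} a_j P_m(x_j)$ of the escalator tree. Completing the square converts the equation $f(x)=N$ into a representation problem for the diagonal quadratic form $\sum_{j} a_j y_j^2$ on a fixed residue class modulo $2(m-2)$, and the corresponding theta series is a half-integral-weight modular form whose level depends polynomially on $m$ and $\prod_j a_j$. Decomposing into Eisenstein and cuspidal parts, the Eisenstein coefficient of $N$ admits a lower bound of order $N^{(n-2)/2}$ via a local density (Siegel-type) computation, while the cuspidal coefficient is bounded above by $O(N^{(n-2)/4+\varepsilon})$ using Deligne's bound and a Petersson-norm estimate that grows polynomially in the level. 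For $n\geq 5$ these estimates force $r_f(N)>0$ once $N$ exceeds an explicit threshold $T(f,m)$ polynomial in $m$ and in the coefficients $a_j$. Since each $a_j$ is itself a truant at a parent node, a bootstrapping argument propagates polynomial bounds through the tree and yields $\gamma_m=O_\varepsilon(m^{7+\varepsilon})$. The main obstacle is keeping the $m$-dependence tight throughout: the Petersson-norm bound on the cusp part, the local density lower bounds in the Eisenstein part, and the optimization over $n$ must all align to produce the exponent $7+\varepsilon$.

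\textbf{Part (2), lower bound $\gamma_m\geq m-4$.} For $m\geq 6$ I consider $f(x_1,\ldots,x_{m-5})=\sum_{j=1}^{m-5}P_m(x_j)$. The values taken by $P_m$ on $\Z$ are, in increasing order, $0,1,m-3,m,3m-8,\ldots$, so each summand lies in $\{0,1\}\cup[m-3,\infty)$. Setting $k$ of the $x_j$ equal to $\pm 1$ and the rest to $0$ realizes $f=k$ for every $0\leq k\leq m-5$; whereas $f=m-4$ would force either all summands in $\{0,1\}$ (giving a sum at most $m-5$) or some summand $\geq m-3>m-4$ (giving a sum already too large). Hence $f$ represents $1,\ldots,m-5$ but not $m-4$ and is therefore non-universal, proving $\gamma_m\geq m-4$.

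\textbf{Part (2), construction missing exactly $\ell$.} Given $\ell\in\N$, take $m=\ell+4$ and start from $f_0=\sum_{j=1}^{m-5}P_m(x_j)$ (for $\ell=1$, take $m=5$ and $f_0$ empty). Run the escalator method from $f_0$, with the single modification that any node whose truant equals $\ell$ is not escalated toward $\ell$ but toward the next unrepresented integer. By the previous paragraph, $\ell=m-4$ is already the truant of $f_0$; after skipping it, every subsequent truant is $>\ell$, so every newly introduced coefficient $a$ satisfies $a>\ell$ and contributes $0$ or a value $\geq a>\ell$. Combined with any earlier-form value in $[0,\ell-1]$ this cannot sum to $\ell$, so $\ell$ remains unrepresented throughout. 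The modified escalator tree is dominated by the ordinary one and hence finite by (the proof of) Lemma \ref{lem:finitegammam}, so any terminal node produces a form representing $\N\setminus\{\ell\}$.
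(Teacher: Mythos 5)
Your Part (2) is essentially sound. The lower bound via $f=\sum_{j=1}^{m-5}P_m(x_j)$ (truant $m-4$) is correct and even a little more direct than the paper, which instead quotes Guy's explicit form $\sum_{j=1}^{\ell-1}P_m(x_j)+\sum_{j=\ell}^{\ell+m-1}(\ell+1)P_m(x_j)+(2\ell+1)P_m(x_{\ell+m})$ and verifies it represents $\N\setminus\{\ell\}$ using Cauchy's polygonal number theorem. Your modified-escalator construction for the second half has one slip: from ``every subsequent truant $t$ is $>\ell$'' you infer ``every newly introduced coefficient $a$ satisfies $a>\ell$,'' but the escalation constraint is only $a_{\mathrm{prev}}\leq a\leq t$, so $a$ could well be $\leq\ell$ (e.g.\ $a=1$, which immediately makes $\ell$ representable). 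You must additionally \emph{choose} each escalation coefficient in $(\ell,t]$ (say $a=t$); with that stipulation your invariant ``$\ell$ stays unrepresented'' goes through, and termination follows from the Chan--Oh local--global input as in Lemma \ref{lem:finitegammam}.

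Part (1) has a genuine gap. The heart of the paper's proof of the exponent $7+\varepsilon$ is the observation that for $m$ sufficiently large (depending on the depth $n$) the escalator nodes of depth $n$ are \emph{independent of $m$}: only finitely many lattices $L_j$ ever occur, their coefficients and levels $N_{L_j}$ are $O_n(1)$, and the entire $m$-dependence sits in the conductor $N\asymp m$ of the shift $\nu_{j,m}$. This is what lets one combine $a_{E}(h)\gg_n m^{-1}h^{n/2-1-\varepsilon}$ with $a_{G}(h)\ll_n m^{3n/2+5/2+\varepsilon}h^{(n/2-1)/2+\varepsilon}$ to get positivity for $h\gg m^{6+26/(n-2)+\varepsilon}$, hence $\ell\gg m^{7+26/(n-2)+\varepsilon}$ after the substitution $h\asymp \ell/m$, and then send $n\to\infty$. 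Your proposed substitute --- bootstrapping polynomial bounds on the $a_j$ through the tree because each $a_j$ is at most a truant of its parent --- does not deliver this: if the $a_j$ (hence the level of the lattice) were merely polynomial in $m$, the cusp bound of Proposition \ref{prop:cusppart} picks up a factor $N_L^{n+7/2+\varepsilon}$ that grows with $m$ and worsens as $n$ increases, destroying both the exponent $7$ and the optimization over $n$. Two further missing ingredients: the paper works with \emph{even} rank $n\geq 6$ precisely because the cusp-coefficient bound is worse in the half-integral-weight case you invoke for $n\geq 5$; and one must show the tree has no leaves before depth $n_\varepsilon$ for large $m$ (the paper's $2^n<m-4$ counting argument), without which ``take $n$ large'' is not available.
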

\begin{remark}
Theorem \ref{thm:main} (2) is due to Guy \cite{Guy} and is by explicit construction for the form with $a_j=1$, but we include it here for comparison with the upper bound obtained in Theorem \ref{thm:main} (1).
\end{remark}

It may be natural to ask whether one can consider similar questions with mixed sums of $m_j$-gonal numbers.
\begin{equation}\label{eqn:mixed}
f(x)=\sum_{j=1}^{n} a_j P_{m_j}(x_j).
\end{equation}
Theorem \ref{thm:main} (2) immediately implies that there is no uniform bound for universality of arbitrary mixed sums of the type \eqref{eqn:mixed}, but one may add the restriction $m_j<M$ in \eqref{eqn:mixed} for some fixed $M$ and ask whether there is a bound $b_M$ such that any sum of the type \eqref{eqn:mixed} with $m_j<M$ is universal if and only if it represents every integer up to $b_M$. Techniques similar to those used in the proof of Theorem \ref{thm:main} (1) should lead to the existence of such a bound, but a quantitative version of this bound would involve more careful calculations than those used in the proof of Theorem \ref{thm:main} (1). This may be an interesting direction for future research.

The proof of Theorem \ref{thm:main} goes through the theory of modular forms.  Roughly speaking, one constructs Bhargava's escalator tree up to a fixed depth $n_0\geq 4$ (i.e., one has $n=n_0\geq 4$) and then splits the corresponding theta series into an Eisenstein series component and a cusp form component.  Then one obtains an upper bound for the coefficients of the cuspidal part and a lower bound for the coefficients of the Eisenstein series component.  The Eisenstein series yields the main asymptotic term and is positive whenever a number is locally represented; hence the $h$th coeff of the theta series is positive for $h$ sufficiently large.   We give a lower bound for the Eisenstein series part in Section \ref{sec:Eisenstein}, and bound the coefficients of the cuspidal part from above in Section \ref{sec:cusp}. Although these bounds depend on the specific lattice $L$, it turns out that we can ignore this influence because the lattices occurring in the calculation of the upper bound for $\gamma_m$ are independent of $m$ for $m$ sufficiently large, and hence there are only finitely many lattices involved. More details can be found in the proof of Theorem \ref{thm:main} in Section \ref{sec:universal}.

\section{Preliminaries}\label{sec:prelim}
Before obtaining a bound for $\gamma_m$, we prove its existence.

\begin{lemma}\label{lem:finitegammam}
The constant $\gamma_m$ exists and is finite.
\end{lemma}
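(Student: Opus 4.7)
The plan is to realize $\gamma_m$ via Bhargava's escalator tree $T_m$. The root is the empty sum $f_0 = 0$; at each non-universal node $f = \sum_{i=1}^n a_i P_m(x_i)$ with truant $t = t(f) := \min\{k \in \N : k \notin f(\Z^n)\}$, the children of $f$ consist of all forms $f + a P_m(y)$ with $(a, y) \in \Z_{>0} \times \Z$ such that this new form represents $t$; nodes corresponding to universal forms are declared leaves. Once $T_m$ is shown to be finite, I set $\gamma_m := \max\{t(f) : f \text{ a non-leaf of } T_m\}$. Then any $f$ of the shape \eqref{eqn:sumsmgonal} representing $\{1, \ldots, \gamma_m\}$ cannot be a non-leaf (its truant would be $\leq \gamma_m$ and therefore represented), so $f$ must be universal, giving the lemma.

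Finite branching at each node is immediate: from $f$ with truant $t$, a valid escalation $(a, y)$ must satisfy $a P_m(y) \leq t$. Since $P_m(1) = 1$ this forces $a \leq t$, and since $P_m$ has leading coefficient $(m-2)/2$ it forces $|y| \leq C\sqrt{t/(m-2)}$, leaving only finitely many admissible pairs.

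The main obstacle is showing that no branch of $T_m$ is infinite; combined with finite branching, K\"onig's lemma will then yield finiteness of $T_m$. The strategy is to establish a uniformity statement: there exists $n_0 = n_0(m)$ such that every node at depth $\geq n_0$ in $T_m$ is universal, so that every branch terminates by depth $n_0$. Along any branch the coefficients are constrained by $a_{i+1} \leq t(f_i)$, and the escalator's representation requirements progressively impose local representability conditions on $f$. Once the rank $n$ of $f$ exceeds a threshold depending on $m$, the theta-series decomposition $\theta_f = E_f + C_f$ into Eisenstein and cuspidal components becomes the key tool: for $n \geq 5$, the $N$-th Eisenstein coefficient grows like $N^{n/2 - 1}$ times a positive local density (positive once local universality is attained), while the $N$-th cuspidal coefficient is $O(N^{n/4 + \varepsilon})$. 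Hence for $n$ sufficiently large and $N$ sufficiently large, every locally representable $N$ is globally represented by $f$; smaller $N$ are represented by the escalator's construction itself (each non-leaf node represents every positive integer strictly below its truant), forcing universality at depth $n_0$.

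Putting finite branching and finite depth together, $T_m$ is finite and $\gamma_m$ exists as the maximum truant over its finitely many interior nodes. The hard part is the depth bound, which leverages the full modular-forms machinery developed in the remainder of the paper (specifically the Eisenstein lower bound of Section \ref{sec:Eisenstein} and the cusp upper bound of Section \ref{sec:cusp}); finite branching and the reduction to a universality-from-rank statement are the straightforward parts.
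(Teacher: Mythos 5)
Your skeleton --- escalator tree, finite branching, termination of every branch, $\gamma_m$ as the maximal truant of the interior nodes --- is the same as the paper's, and the finite-branching step is essentially fine (though note that an escalation is determined by the new coefficient $a_{n+1}\le t$ alone, not by a pair $(a,y)$: the new variable remains free, and the bound $a_{n+1}\le t$ already gives finitely many children). The gap is in your termination argument. You claim there is an $n_0(m)$ such that \emph{every} node at depth $\ge n_0$ is universal, and you propose to prove this by comparing the Eisenstein coefficient $\gg N^{n/2-1}$ (times a local density) against a cuspidal coefficient $O(N^{n/4+\varepsilon})$. That comparison only shows that a deep node represents every \emph{sufficiently large, locally represented} integer. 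It says nothing about (i) integers excluded by a local obstruction --- you assert parenthetically that local universality ``is attained,'' but give no argument that escalations remove all local obstructions by a bounded depth --- and (ii) the finitely many locally represented integers below the (possibly very large) threshold where the Eisenstein term dominates the cusp term. The truant of a deep node can perfectly well be such an integer, in which case that node is not universal and your uniform depth bound fails; the remark that ``smaller $N$ are below the truant'' does not cover the range between the truant and the analytic threshold.

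The paper sidesteps this by not proving a uniform universality depth at all. It invokes Chan--Oh's local-global theorem for positive definite quadratic polynomials in at least five variables to conclude that each depth-$5$ node misses only finitely many sporadic integers together with finitely many congruence classes; since each escalation represents its parent's truant and therefore strictly shrinks this finite exceptional data, every branch terminates after finitely many further steps (uniformity of the depth is only a consequence, via K\"onig's lemma, once finiteness is known). To salvage your route you would have to weaken your claim to ``every depth-$n_0$ node has exceptional set equal to a finite set union finitely many congruence classes'' and then run the same well-foundedness argument, at which point you have reproduced the paper's proof with the analytic machinery standing in for Chan--Oh (and needlessly, since for mere existence of $\gamma_m$ the off-the-shelf result suffices). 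One further small point: your closing deduction (``any $f$ representing $\{1,\dots,\gamma_m\}$ cannot be a non-leaf, hence is universal'') tacitly assumes $f$ is a node of the tree; an arbitrary sum need not be, and one must instead argue, by descending the tree, that such an $f$ contains a universal leaf as a partial sum.
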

\begin{proof}
In order to show that $\gamma_m$ is finite, we first describe the construction of an object known as a \begin{it}escalator tree\end{it}, which was introduced by Bhargava in his proof of the Conway--Schneeberger Fifteen Theorem. For convenience we use the $n$-tuple $[a_1,...,a_n]$ to denote the sum of generalized $m$-gonal numbers $\sum_{j=1}^n a_jP_m(x_j)$, and without loss of generality we assume that $a_1\leq \cdots\le a_n$. Suppose that $f=[a_1,...,a_n]$ is not universal. Then we define the truant of $f$ to be the smallest positive integer which is not represented by $f$, and define an escalation of $f$ to be any sum $g=[a_1,...,a_n,a_{n+1}]$ which represents the truant of $f$. We will then construct an escalator tree by forming an edge between $f$ and $g$, with $\emptyset$ as the root. If $g$ is universal, then $g$ will be a leaf of our tree.

We claim that there are only finitely many nodes in this escalator tree. Note that each node is either universal or has only finitely many escalations. When $n\geq 5$, Chan and Oh \cite[Theorem 4.9]{CO13} show that the local-to-global principle (i.e., that every number which is locally represented by a positive definite quadratic polynomial is globally represented) is valid for sufficiently large integers. Therefore, when $n=5$, each sum $f=[a_1,...,a_n]$ on the tree represents every integer except for finitely many sporadic exceptions and finitely many congruence classes. As there are only finitely many such $f$ and each escalation represents at least one more integer or one more congruence class than their parents, we will complete our construction in finitely many steps. If a sum $g$ of generalized $m$-gonal numbers represents every truant of nodes on the tree, then it must contain some leaf (universal sum) as a partial sum. Hence $g$ is universal. Therefore $\gamma_m$ is the largest truant of the nodes on the tree.
\end{proof}

In order to obtain a bound for $\gamma_m$, we employ both algebraic and analytic techniques from the theory of quadratic lattices and quadratic forms. For the analytic side, we introduce some basic definitions about modular forms. Let $\mathbb{H}=\{z\in\mathbb{C} : \mathrm{Im}(z)>0\}$ be the upper half plane. The matrix $\gamma=\left(\begin{smallmatrix} a&b\\ c&d\end{smallmatrix}\right)\in \GL_2(\R)$ acts on $\mathbb{H}$ via fractional linear transformations as $\gamma z:=\frac{az+b}{cz+d}$ and it is standard to write $j(\gamma,z):=cz+d$. Let $f(z):\mathbb{H}\to\mathbb{C}$ be a function. For each $k\in\mathbb{Z}$, we use the notation $f|_k\gamma$ to denote the function whose value at $z$ is
\[
f(z)|_k\gamma:=\det(\gamma)^{\frac{k}{2}} j(\gamma,z)^{-k}f(\gamma z).
\]
For $\Gamma\subseteq \SL_2(\Z)$, we call $\nu:\Gamma\to \C$ a \begin{it}multiplier system\end{it} for $\Gamma$ of weight $k$ if for every $\gamma,M\in\Gamma$
\[
\nu(\gamma M) j(\gamma M,z)^k  = \nu(\gamma)j(\gamma,Mz)^k\nu(M)j(M,z)^k.
\]
We also refer to the multiplier system as a \begin{it}Nebentypus character\end{it} if $\nu$ is a character. A (holomorphic) \begin{it}modular form\end{it} of weight $k\in\mathbb{Z}$ and multiplier system $\nu$ for a congruence subgroup $\Gamma(N)\subseteq\Gamma\subseteq \mathrm{SL}_2(\mathbb{Z})$ is a function $f$ which satisfies the following conditions:
\begin{enumerate}
\item $f$ is holomorphic on $\mathbb{H}$;
\item for every $\gamma\in\Gamma$, we have
\[
f|_k\gamma=\nu(\gamma) f;
\]
\item for any $\gamma_0\in\mathrm{SL}_2(\mathbb{Z})$,  $f(z)|_k\gamma_0$ has the form $\sum a_ne^{2\pi inz/N}$ with $a_n=0$ for all $n<0$. The number $N$ is called the \begin{it}cusp width\end{it}.
\end{enumerate}
Furthermore, if $a_0=0$ for all $\gamma_0\in\mathrm{SL}_2(\mathbb{Z})$, then we call $f$ a cusp form. The space of cusp forms is an inner product space with respect to the \begin{it}Petersson inner product\end{it}, defined for weight $k$ cusp forms $f$ and $g$ on $\Gamma$ with multiplier $\nu$ with $|\nu|=1$ by
\begin{equation}\label{eqn:innerdef}
\left<f,g\right>_{\Gamma}:=\int_{\Gamma\backslash\H} f(z)\overline{g(z)} y^k \frac{dx dy}{y^2}.
\end{equation}
We have included the subscript $\Gamma$ to emphasize that the definition depends on the choice of $\Gamma$ on the right-hand side. There is another normalization
\[
\left<f,g\right>:=\frac{\left<f,g\right>_{\Gamma}}{\left[\SL_2(\Z):\Gamma\right]}
\]
which is independent of the choice of $\Gamma$. Different authors use these two different normalizations, and both have advantages and disadvantages in different settings. One advantage of the normalization $\left<f,g\right>_{\Gamma}$ is given in Lemma \ref{lem:normbound} below. On the other hand, it is useful to note that the choice $\left<f,g\right>$ leads to an isometry with respect to the slash operator $|_k$. The following lemma is well-known, but we supply a proof for the convenience of the reader.
\begin{lemma}\label{lem:slashiso}
\noindent

\noindent
\begin{enumerate}[leftmargin=*,label={\rm(\arabic*)}]
\item
For $\gamma\in \GL_2(\Q)$ we have
\[
\left<f|_k \gamma,g|_k\gamma\right> =\left<f,g\right>.
\]
\item
In particular, defining $f(z)|V_N:=f(Nz)$, we have
\[
\left<f|V_N,g|V_N\right> = N^{-k}\left<f,g\right>
\]
and
\[
\left<f|V_N,g|V_N\right>_{\Gamma'} = \left[\Gamma:\Gamma'\right]N^{-k}\left<f,g\right>_{\Gamma},
\]
where $f,g$ are modular on $\Gamma$ and $f|V_N,g|V_N$ are modular on $\Gamma'$.

\end{enumerate}
\end{lemma}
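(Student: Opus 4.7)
The plan is to establish (1) by a direct change of variables in the Petersson integral and then derive (2) as an immediate corollary.

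For (1), the key input is a transformation identity for the weight-$k$ measure under $z\mapsto\gamma z$. Using $\operatorname{Im}(\gamma z) = \det(\gamma)\,y/|j(\gamma,z)|^2$ together with the Jacobian $\det(\gamma)^2/|j(\gamma,z)|^4$ of $z\mapsto\gamma z$ viewed as a map $\H\to\H$, a short calculation shows that the pullback under $z\mapsto\gamma z$ of $f(w)\overline{g(w)}\operatorname{Im}(w)^{k-2}\,du\,dv$ is precisely $(f|_k\gamma)(z)\overline{(g|_k\gamma)(z)}\,y^{k-2}\,dx\,dy$: the factor $\det(\gamma)^k|j(\gamma,z)|^{-2k}$ coming out of the measure matches exactly the factor built into the definition of $|_k\gamma$.

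Next choose a congruence subgroup $\Gamma_1\subseteq\Gamma\cap\gamma^{-1}\Gamma\gamma$ small enough that the conjugate $\gamma\Gamma_1\gamma^{-1}$ still lies in $\SL_2(\Z)$; any principal congruence subgroup $\Gamma(N)$ whose level is divisible by the denominators of the entries of $\gamma$ and by the level of $\Gamma$ will do. Then $f|_k\gamma$ and $g|_k\gamma$ are modular on $\Gamma_1$, while $f,g$ are modular on $\gamma\Gamma_1\gamma^{-1}$. Integrating the transformation identity over a fundamental domain $\mathcal F$ for $\Gamma_1$ and noting that $z\mapsto\gamma z$ sends $\mathcal F$ bijectively (up to sets of measure zero) onto a fundamental domain for $\gamma\Gamma_1\gamma^{-1}$ yields
\[
\langle f|_k\gamma,\, g|_k\gamma\rangle_{\Gamma_1} \;=\; \langle f,\, g\rangle_{\gamma\Gamma_1\gamma^{-1}}.
\]
Since the hyperbolic measure is $\SL_2(\R)$-invariant (and the scalar factor of $\gamma$ acts trivially on $\H$), conjugation preserves hyperbolic covolume, which for subgroups of $\SL_2(\Z)$ is proportional to the index. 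Hence $[\SL_2(\Z):\Gamma_1]=[\SL_2(\Z):\gamma\Gamma_1\gamma^{-1}]$, and dividing by this common index gives the normalized identity $\langle f|_k\gamma,g|_k\gamma\rangle = \langle f,g\rangle$.

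For (2), observe that $V_N$ is a rescaling of the slash action of $\sm{N}{0}{0}{1}$: a direct computation gives $(f|_k\sm{N}{0}{0}{1})(z) = N^{k/2}f(Nz) = N^{k/2}(f|V_N)(z)$, so $f|V_N = N^{-k/2}(f|_k\sm{N}{0}{0}{1})$. Plugging this into (1) yields $\langle f|V_N, g|V_N\rangle = N^{-k}\langle f,g\rangle$, which is the first claim. The second claim follows by multiplying through by $[\SL_2(\Z):\Gamma']$ and using $[\SL_2(\Z):\Gamma']/[\SL_2(\Z):\Gamma] = [\Gamma:\Gamma']$. The main obstacle is the bookkeeping needed when $\gamma\not\in\SL_2(\Z)$: one must choose $\Gamma_1$ small enough that $\gamma\Gamma_1\gamma^{-1}$ remains in $\SL_2(\Z)$ and then verify that the two indices $[\SL_2(\Z):\Gamma_1]$ and $[\SL_2(\Z):\gamma\Gamma_1\gamma^{-1}]$ agree. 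This is precisely the point at which the index-normalized inner product $\langle\cdot,\cdot\rangle$ is needed in place of $\langle\cdot,\cdot\rangle_\Gamma$; beyond this, the argument is a routine change of variables.
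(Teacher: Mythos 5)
Your proof is correct and follows essentially the same route as the paper's: a change of variables in the Petersson integral using $\operatorname{Im}(\gamma z)=\det(\gamma)y/|j(\gamma,z)|^{2}$, computed over the common subgroup $\Gamma\cap\gamma^{-1}\Gamma\gamma$, with (2) deduced from $f|V_N=N^{-k/2}f|_k\left(\begin{smallmatrix}N&0\\0&1\end{smallmatrix}\right)$ and multiplicativity of indices. Your extra step justifying $[\SL_2(\Z):\Gamma_1]=[\SL_2(\Z):\gamma\Gamma_1\gamma^{-1}]$ via invariance of hyperbolic covolume is a welcome point that the paper's proof leaves implicit, but it does not change the argument.
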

\begin{remark}
If $\Gamma=\Gamma_0(M)$ is the congruence subgroup
\[
\Gamma_0(M):=\left\{\left(\begin{matrix} a&b \\ c &d\end{matrix}\right)\in\SL_2(\Z): M\mid c\right\},
\]
then we may choose $\Gamma'=\Gamma_0(MN)$. In this case (cf. \cite[Proposition 1.7]{OnoBook}) we have
\begin{equation}\label{eqn:Gamma0index}
\left[\Gamma_0(M):\Gamma_0(MN)\right]= N \prod_{\substack{p\mid N\\ p\nmid M}} \left(1+\frac{1}{p}\right)=N^{1+o(1)}.
\end{equation}
\end{remark}
\begin{proof}
(1) Suppose that $f,g$ are modular on $\Gamma$ and set $\Gamma_{\gamma}:=\Gamma\cap \gamma^{-1}\Gamma \gamma$. Since $\left<f,g\right>$ is independent of the group and $f|_{k}\gamma$ is modular on $\Gamma_{\gamma}$, we may choose the group $\Gamma_{\gamma}$ for both of the inner products. Noting that $\im(\gamma z)=\frac{y \det(\gamma)}{|cz+d|^2}$, we compute
\begin{align*}
\left<f|_k\gamma, g|_k\gamma\right> &= \frac{1}{\left[\SL_2(\Z):\Gamma_{\gamma}\right]}\int_{\Gamma_{\gamma}\backslash\H} \det(\gamma)^{k} f(\gamma z)\overline{g(\gamma z)} \frac{y^k}{|j(\gamma,z)|^{2k}}  \frac{dxdy}{y^2}\\
&=\frac{1}{\left[\SL_2(\Z):\Gamma_{\gamma}\right]}\int_{\Gamma_{\gamma}\backslash\H}  f(\gamma z)\overline{g(\gamma z)} \im(\gamma z)^k  \frac{dxdy}{y^2}\\
&=\frac{1}{\left[\SL_2(\Z):\Gamma_{\gamma}\right]}\int_{\Gamma_{\gamma}\backslash\H}  f(z)\overline{g(z)} y^k  \frac{dxdy}{y^2}=\left<f,g\right>.
\end{align*}
Note that in the last line we made the change of variables $z\mapsto \gamma^{-1}z$ and used the fact that if $\mathcal{F}$ is a fundamental domain for $\Gamma_{\gamma}$, then so is $\gamma\mathcal{F}$.

(2) This follows immediately from part (1) because
\[
f|V_N= N^{-\frac{k}{2}} f\Big|_k\left(\begin{matrix}N&0\\ 0&1\end{matrix}\right)
\]
and $\left[\SL_2(\Z):\Gamma'\right]=\left[\SL_2(\Z):\Gamma\right]\left[\Gamma:\Gamma'\right]$.
\end{proof}
It is natural to define the Petersson norm by $\|f\|^2:=\left<f,f\right>$ and we also denote $\|f\|_{\Gamma}^2:=\left<f,f\right>_{\Gamma}$.

On the algebraic side, we adopt the language of quadratic spaces and lattices. A quadratic space is a vector space $V$ with a symmetric bilinear form $b$ on it. A quadratic lattice $L$ on $V$ is a finitely generated $\mathbb{Z}$-module in $V$ with the property that $V=\mathbb{Q}L$. Then there is a basis $v_1,...,v_n$ of $V$ such that $L=\mathbb{Z}v_1+\cdots+\mathbb{Z}v_n$. With respect to this basis, we can associate a so-called Gram matrix $M_L=\left(b(v_i,v_j)\right)$ with $L$, and write $L\cong M_L$. When $M_L$ is a diagonal matrix with entries $a_1,...,a_n$ on the diagonal, it is written as $\langle a_1,...,a_n\rangle$. For a rational prime $p$, we define the localization of $V$ and $L$ by $V_p:=V\otimes_\mathbb{Q}\mathbb{Q}_p$ and $L_p:=L\otimes_\mathbb{Z}\mathbb{Z}_p$ respectively.
We also define the $p$-adic measure to be the unique translation-invariant measure such that $\int_{\Z_p} d\sigma=1$; this leads to the conclusion that $\int_{N\Z_p}d\sigma = p^{-\ord_p(N)}$.

By a shifted lattice we mean a coset $L+\nu$ where $\nu$ is a vector in $V$. Elements in this coset are of the form $x+\nu$ with $x\in L$. The smallest positive integer $N$ such that $N\nu\in L$ is called the conductor of the shifted lattice $L+\nu$. As in \eqref{eqn:sumsmgonal}, let $f(x)=\sum_{j=1}^n a_jP_m(x_j)$ be a sum of generalized $m$-gonal numbers.
 In order to investigate the nonnegative integers $\ell$ represented by $f$, for each such $\ell$ we write
\begin{equation}\label{eqn:hdef}
h=h(\ell):=\frac{2\ell}{m-2}+\sum_{j=1}^na_j\left(\frac{m-4}{2(m-2)}\right)^2.
\end{equation}
Let $X:=L+\nu$ where $L=\mathbb{Z}v_1+\cdots+\mathbb{Z}v_n\cong\langle a_1,...,a_n\rangle$ and $\nu=-\frac{m-4}{2(m-2)}(v_1+\cdots+v_n)\in V$. Then $\ell$ is represented by $f$ if and only if $h$ is represented by $X$. We call $X$ the corresponding shifted lattice of $f$. Due to equivalence between the representability of $\ell$ by each $f$ occurring in the escalator tree and the representability of $h(\ell)$ by the corresponding shifted lattice $X$, we investigate representations by shifted lattices in order to prove Theorem \ref{thm:main}. We use $\varphi(v)$ to denote the positive definite quadratic form associated with $V$, i.e., for $v=\sum_{j=1}^{n}\lambda_j v_j$ with $\lambda_j\in\Q$ we have
\begin{equation}\label{eqn:varphidef}
\varphi(v)=b(v,v)=\sum_{j=1}^{n} a_j\lambda_j^2.
\end{equation}

Let $\O(V)$ and $\SO(V)$ be the orthogonal group and the proper orthogonal group of $V$ respectively.  The class $\mathrm{cls}(X)$ and the proper class $\mathrm{cls}^+(X)$ of $X$ are defined as the orbits of $X$ under the action of $\O(V)$ and $\SO(V)$ respectively. The orbit of $X$ under the action of $\O_{\mathbb{A}}(V)$ is called the genus of $X$, and the orbit of $X$ under the action of $\SO_{\mathbb{A}}(V)$ is called the proper genus of $X$. We denote them by $\mathrm{gen}(X)$ and $\mathrm{gen}^+(X)$ respectively. By \cite[Lemma 4.2]{CO13}, all the elements in the $\mathrm{gen}(X)$ and $\mathrm{gen}^+(X)$ are shifted lattices on $V$. Suppose that $X_1,...,X_t$ represent the classes in the genus of $X$. Then we define
$$
R(h,X):=\sum_{i=1}^t\frac{r(h, X_i)}{|\O(X_i)|}\hspace{10mm} \mathrm{and}\hspace{10mm} m(X):=\sum_{i=1}^t|\O(X_i)|^{-1},
$$
where $r(h, X_i):=\#\{v\in X_i : \varphi(v)=h\}$ and $|\O(X_i)|$ is the cardinality of the orthogonal group $\O(X_i)$ of $X_i$. We call $m(X)$ the mass of $X$.

We put
\[
\hspace{12mm}\theta_X(z)=\sum_{v\in X}e^{2\pi i\varphi(v)z}=\sum_{h\in\mathbb{Q}}r(h,X)e^{2\pi ihz}:=\sum_{h\in\mathbb{Q}}a_{\theta_X}(h)e^{2\pi ihz}
\]
 and
\[
E_X(z)=\sum_{h\in\mathbb{Q}}R(h,X)m(X)^{-1}e^{2\pi ihz}:=\sum_{h\in\mathbb{Q}}a_{E_X}(h)e^{2\pi ihz}.
\]
Then, as explained by Shimura in \cite{Shimura2004}, $E_X(z)$ is an Eisenstein series of weight $n/2$. This means that for a positive rational number $h$, the coefficient $a_{E_X}(h)$ can be expressed as the product of local densities. This result in the case of quadratic forms was first proved by Siegel \cite{Siegel} and then generalized by Weil \cite{Weil}. Furthermore, Shimura \cite{Shimura2004} shows that the difference
\[
G_X(z)=\theta_X(z)-E_X(z):=\sum_{h\in\mathbb{Q}}a_{G_X}(h)e^{2\pi ihz}
\]
is a cusp form. In the following sections we are going to find an upper bound for $|a_{G_X}(h)|$ and a lower bound for $a_{E_X}(h)$. Then we can determine when $h$ is represented by $X$, i.e., $r(h,X)>0$.

\vskip20pt

\section{The Eisenstein Series Component}\label{sec:Eisenstein}
In this section, we give a lower bound and an upper bound on the coefficients of the Eisenstein series component.  Specifically, we bound the local density of a shifted lattice $X=L+\nu$ at each prime $p$, using a formula of Shimura \cite{Shimura2004} and formulas of Yang \cite{Yang}. In the following calculation, we assume more generally that

\begin{enumerate}
\item $L$ is a primitive positive definite integral lattice with even rank, and the Gram matrix $M_L=\langle a_1,...,a_n\rangle$ with respect to the basis $v_1,...,v_n$;
\vskip3pt
\item $\nu=-\frac{c}{N}(v_1+\cdots+v_n)$ with $c, N\in\mathbb{Z}$ and $(c,N)=1$, and hence $N$ is the conductor of $L+\nu$;
\vskip2pt
\item $h\in\Q$ satisfies the condition $h-\sum_{j=1}^na_j\left(c/N\right)^2\in 8((N,4)N)^{-1}\mathbb{Z}$.
\end{enumerate}

For any rational prime $p$ and $z\in\mathbb{Q}_p$, we define $\mathbf{e}_p(z)=\mathbf{e}(-y)=e^{2\pi i(-y)}$ with $y\in\bigcup_{t=1}^\infty p^{-t}\mathbb{Z}$ such that $z-y\in\mathbb{Z}_p$. Let $\lambda(v)$ and $\lambda_p(v)$ be the characteristic functions of $X$ and $X_p$ respectively. Normalizing the measures $dv$ and $d\sigma$ on $L_p$ and $\Z_p$ so that $\int_{L_p}dv=\int_{\mathbb{Z}_p}d\sigma=1$, the local density at $p$ is defined as
\[
b_p(h,\lambda,0):=\int_{\mathbb{Q}_p}\int_{V_p}\mathbf{e}_p(\sigma(\varphi(v)-h))\lambda_p(v)dvd\sigma.
\]
For any $\sigma\in\mathbb{Q}_p$, we define $\tau_p(\sigma)$ by
\[
\tau_p(\sigma)=\tau_{p,N,c}(\sigma):=\int_{\mathbb{Z}_p}\mathbf{e}_p(\sigma[Nx^2-2cx])dx,
\]
where the measure on $\mathbb{Z}_p$ is such that $\mathbb{Z}_p$ has measure 1.
Here $(N,c)=1$ and we omit the dependence on $N$ and $c$ when they are clear from the context; in particular, in the following calculations they are the $N$ and $c$ determining $\nu$ in condition (2) above.

\begin{lem}
Let $\chi$ be a real Dirichlet character. Then for any integer $s\geq 2$,
\[
\zeta(s)^{-1}\leq L(s,\chi)\leq \zeta(s).
\]
\end{lem}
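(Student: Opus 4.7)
The plan is to use the Euler product expansions of $\zeta(s)$ and $L(s,\chi)$, which converge absolutely for $s\geq 2>1$, and then compare them factor by factor. Since $\chi$ is a real Dirichlet character, $\chi(p)\in\{-1,0,1\}$ for every prime $p$, so every local factor of $L(s,\chi)$ and $\zeta(s)$ is a positive real number, and both products converge to positive real values. This reduces the inequalities to checking three elementary cases at each prime.

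For the upper bound, I would compare the Euler factor
\[
(1-\chi(p)p^{-s})^{-1}
\]
of $L(s,\chi)$ to the Euler factor $(1-p^{-s})^{-1}$ of $\zeta(s)$. If $\chi(p)=1$ the two are equal; if $\chi(p)=-1$ the local factor of $L(s,\chi)$ is $(1+p^{-s})^{-1}\leq 1\leq (1-p^{-s})^{-1}$; and if $\chi(p)=0$ the factor is $1\leq (1-p^{-s})^{-1}$. Taking the product over all primes yields $L(s,\chi)\leq \zeta(s)$.

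For the lower bound, I would instead write
\[
\zeta(s)^{-1}=\prod_p(1-p^{-s})
\]
and compare this termwise to the Euler product for $L(s,\chi)$. The case $\chi(p)=1$ reduces to the inequality $(1-p^{-s})^{-1}\geq 1-p^{-s}$, which holds because $(1-p^{-s})^2\leq 1$. The case $\chi(p)=-1$ reduces to $(1+p^{-s})^{-1}\geq 1-p^{-s}$, equivalent to $1\geq 1-p^{-2s}$. The case $\chi(p)=0$ gives $1\geq 1-p^{-s}$. Multiplying these local inequalities over all primes gives $L(s,\chi)\geq \zeta(s)^{-1}$.

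There is essentially no obstacle here beyond citing absolute convergence of the Euler products for $s\geq 2$ to justify the term-by-term multiplication; the argument is otherwise a direct case check on $\chi(p)\in\{-1,0,1\}$.
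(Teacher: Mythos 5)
Your proof is correct. It takes a mildly different route from the paper's: you compare the Euler products of $L(s,\chi)$ and $\zeta(s)^{\pm 1}$ factor by factor, doing a three-case check on $\chi(p)\in\{-1,0,1\}$ at each prime, whereas the paper takes logarithms, expands $\log L(s,\chi)=\sum_p\sum_{n\geq 1}\chi(p^n)/(np^{ns})$, and bounds this termwise below by $-\sum_p\sum_{n\geq 1}1/(np^{ns})=-\log\zeta(s)$ using only $\chi(p^n)\geq -1$ (the upper bound is dismissed as obvious). Both arguments are local comparisons at each prime and rest on absolute convergence for $s\geq 2>1$; your version avoids logarithms entirely and makes the key inequalities completely explicit, e.g.\ $(1+p^{-s})^{-1}\geq 1-p^{-s}$ is just $1\geq 1-p^{-2s}$, at the cost of a slightly longer case analysis. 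The paper's logarithmic version is more compact and generalizes immediately to any character with $|\chi|\leq 1$; your multiplicative version is arguably the more elementary and self-contained of the two. Either is a complete proof.
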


\begin{proof}
The second inequality is obvious. We only need to prove the first one.
\[
\log L(s,\chi)=-\sum_p\log(1-\chi(p)p^{-s})=\sum_p\sum_{n=1}^\infty\frac{\chi(p^n)}{np^{ns}}    \geq-\sum_p\sum_{n=1}^\infty\frac{1}{np^{ns}}=\displaystyle-\log\zeta(s).
\]
\end{proof}

Then by the formula given in \cite[Theorem 1.5]{Shimura2004}, when $n\geq4$ is an even integer, we have the following inequalities:
\begin{multline}\label{eqn:ShimuraEis}
\frac{\pi^{\frac{n}{2}}h^{\frac{n-2}{2}}}{\Gamma(\frac{n}{2})\sqrt{\det(L)}\zeta(\frac{n}{2})^2}\cdot\prod_{p\mid e_1}b_p(h,\lambda,0)\prod_{p\mid he', p\nmid e_1}r_p\left(\frac{n}{2}\right)\leq\frac{R(h,X)}{m(X)}\\
\leq\frac{\pi^{\frac{n}{2}}\zeta(\frac{n}{2})^2h^{\frac{n-2}{2}}}{\Gamma(\frac{n}{2})\sqrt{\det(L)}}\cdot\prod_{p\mid e_1}b_p(h,\lambda,0)\prod_{p\mid he', p\nmid e_1}r_p\left(\frac{n}{2}\right),
\end{multline}
where $e'$ is the product of all finite primes $p$ at which the dual lattice $L_p^{\#}\neq 2L_p$, and $e_1$ is the product of all finite primes $p$ at which $h\notin\mathbb{Z}_p$ or $L_p$ is not maximal or $X_p\neq L_p$. The numbers $r_p(s)$ are given in \cite[Section 1.6]{Shimura2004} and one can check that when $n\geq 6$ is even, we have $1/2\leq r_p(n/2)\leq2$. Therefore, it suffices to bound values of $b_p(h,\lambda,0)$.

\subsection{\texorpdfstring{$\mathbf{p\geq 3}$}{p>=3} and \texorpdfstring{$\mathbf{p \mid N}$}{p divides N}}\label{subsec1}
\vskip20pt

\begin{lem}
Let $p$ be an odd prime divisor of $N$ and $c\in\Z$ with $(c,N)=1$. For any positive integer $t$, the map $$x\mapsto Nx^2-2cx$$ is a bijection of $\mathbb{Z}/p^t\mathbb{Z}$ onto itself.
\end{lem}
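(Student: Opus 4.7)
The plan is to reduce the claim to injectivity, since $\mathbb{Z}/p^t\mathbb{Z}$ is a finite set and any injection of a finite set to itself is a bijection. So I would assume $Nx^2 - 2cx \equiv Ny^2 - 2cy \pmod{p^t}$ and aim to deduce $x \equiv y \pmod{p^t}$.

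First I would rearrange the congruence and factor, obtaining
\[
(x-y)\bigl(N(x+y) - 2c\bigr) \equiv 0 \pmod{p^t}.
\]
The key observation is that the second factor is a unit in $\mathbb{Z}/p^t\mathbb{Z}$. Indeed, since $p \mid N$ by hypothesis, we have $N(x+y) \equiv 0 \pmod{p}$, so $N(x+y) - 2c \equiv -2c \pmod{p}$. The assumptions that $p$ is odd and $(c,N) = 1$ (hence $(c,p)=1$, since $p \mid N$) together imply that $-2c$ is a unit modulo $p$, and therefore $N(x+y) - 2c$ is a unit in $\mathbb{Z}/p^t\mathbb{Z}$. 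Multiplying by its inverse gives $x \equiv y \pmod{p^t}$, which establishes injectivity.

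The argument uses all three hypotheses in an essential way: oddness of $p$ is needed so that $2$ is a unit; the coprimality $(c,N) = 1$ is needed so that $c$ is a unit mod $p$; and the divisibility $p \mid N$ is what kills the $N(x+y)$ term modulo $p$ and allows the factor to reduce to a constant. There is no real obstacle here — the whole proof is a short computation. The only point worth flagging is that on the nose the factorization argument requires both factors to be handled, and one must be careful that it is the second factor (not $x-y$) that one inverts; this is precisely what the hypothesis $p \mid N$ enables.
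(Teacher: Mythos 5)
Your proof is correct and follows essentially the same route as the paper's: reduce to injectivity via finiteness, factor the difference as $(x-y)\bigl(N(x+y)-2c\bigr)$, and use $p\mid N$, $p$ odd, and $(c,p)=1$ to see that the second factor is a unit modulo $p^t$. You actually spell out the role of $p\mid N$ more explicitly than the paper does, which only states $p\nmid 2c$; otherwise the arguments are identical.
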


\begin{proof}
Since $\mathbb{Z}/p^t\mathbb{Z}$ is finite, it is enough to show that this map is injective. Suppose that $[Nx^2-2cx]-[Ny^2-2cy]\in p^t\mathbb{Z}$ with $x,y\in\mathbb{Z}$. Then $(x-y)[N(x+y)-2c]\in p^t\mathbb{Z}$. Since $(c,N)=1$ and $p$ is odd, $p\nmid2c$. Therefore $x-y\in p^t\mathbb{Z}$ and this map is injective.
\end{proof}

\begin{lem}\label{p divides N}
For any odd prime $p$ which divides $N$,
\[
\tau_p(\sigma)=
\begin{cases}
    1 & \text{if }\sigma\in \mathbb{Z}_p,\\
    0 & \text{if }\sigma\notin \mathbb{Z}_p.
\end{cases}
\]
\end{lem}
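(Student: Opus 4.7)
The plan is to handle the two cases of the piecewise function separately, leveraging the bijection lemma immediately preceding the statement. For the case $\sigma \in \mathbb{Z}_p$, one observes that for any $x \in \mathbb{Z}_p$ the expression $Nx^2 - 2cx$ lies in $\mathbb{Z}_p$, so $\sigma[Nx^2-2cx] \in \mathbb{Z}_p$; by the definition of $\mathbf{e}_p$ (one may take $y=0$ in the defining relation $z-y\in\mathbb{Z}_p$), the integrand is identically $1$, and since $\mathbb{Z}_p$ has measure $1$ the integral equals $1$.

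For $\sigma \notin \mathbb{Z}_p$, I would write $\sigma = u/p^t$ with $t \geq 1$ and $u \in \mathbb{Z}_p^\times$. The key observation is that the value of $\mathbf{e}_p(\sigma[Nx^2 - 2cx])$ depends only on $Nx^2 - 2cx$ modulo $p^t\mathbb{Z}_p$, which in turn depends only on $x$ modulo $p^t$. Partitioning $\mathbb{Z}_p$ into the $p^t$ cosets of $p^t\mathbb{Z}_p$ (each of measure $p^{-t}$) yields
\[
\tau_p(\sigma) = p^{-t}\sum_{a \in \mathbb{Z}/p^t\mathbb{Z}} \mathbf{e}_p\!\left(\sigma(Na^2-2ca)\right).
\]

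Now I would apply the preceding lemma: the map $a \mapsto Na^2 - 2ca$ is a bijection on $\mathbb{Z}/p^t\mathbb{Z}$ (this is where the hypotheses $p$ odd and $(c,N)=1$ enter, via $p \nmid 2c$). Reindexing the sum by $b = Na^2 - 2ca$, it becomes $p^{-t}\sum_{b \in \mathbb{Z}/p^t\mathbb{Z}} \mathbf{e}_p(ub/p^t)$. Choosing an integer $u_0 \equiv u \pmod{p^t}$ (which is necessarily coprime to $p$, hence a unit mod $p^t$), each summand equals $e^{-2\pi i u_0 b/p^t}$, and as $b$ runs through $\mathbb{Z}/p^t\mathbb{Z}$ so does $u_0 b$. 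Therefore the sum equals $\sum_{b'=0}^{p^t-1} e^{-2\pi i b'/p^t} = 0$, since $p^t > 1$ and this is a complete sum of $p^t$-th roots of unity.

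There is no substantial obstacle here; the argument is essentially bookkeeping built on top of the bijection lemma. The only subtlety worth emphasizing in the writeup is the legitimacy of reducing the integral to a finite sum, which follows because $\mathbf{e}_p$ factors through $\mathbb{Q}_p/\mathbb{Z}_p$ and $\sigma[Nx^2-2cx] \bmod \mathbb{Z}_p$ depends only on $x \bmod p^t\mathbb{Z}_p$.
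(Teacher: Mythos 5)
Your proposal is correct and follows essentially the same route as the paper: reduce the integral over $\mathbb{Z}_p$ to a finite exponential sum over $\mathbb{Z}/p^t\mathbb{Z}$ by partitioning into cosets of $p^t\mathbb{Z}_p$, invoke the preceding bijection lemma to reindex, and conclude by the vanishing of a complete sum of $p^t$-th roots of unity. The only difference is that you spell out a few routine justifications (well-definedness of the reduction to a finite sum, the unit twist) that the paper leaves implicit.
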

\begin{proof}
It is obvious that $\tau_p(\sigma)=1$ when $\sigma\in\mathbb{Z}_p$. Now suppose that $\sigma=p^{-t}\alpha$ with $\alpha\in\mathbb{Z}_p^\times$ and $t\geq1$. Then we have (noting that $\Z_p/p^t\Z_p\cong \Z/p^t\Z$)
\begin{align*}
\tau_p(p^{-t}\alpha)&=\sum_{x\in \Z_p/p^t\Z_p} \int_{p^t\Z_p}\mathbf{e}_p(p^{-t}\alpha[N(x+y)^2-2c(x+y)])dy
\\
&=\sum_{x\in \Z/p^t\Z} \mathbf{e}_p(p^{-t}\alpha[Nx^2-2cx])\int_{p^t\Z_p}dy
\\
&=p^{-t}\displaystyle\sum_{x\in\mathbb{Z}/p^t\mathbb{Z}}\mathbf{e}_p(p^{-t}\alpha[Nx^2-2cx])
\\
&=p^{-t}\displaystyle\sum_{y\in\mathbb{Z}/p^t\mathbb{Z}}\mathbf{e}_p(p^{-t}\alpha y)=0.
\end{align*}
\end{proof}
\begin{thm}\label{thm:pdivN}
Suppose that $p$ is an odd prime divisor of $N$. Then
\[
b_p(h,\lambda,0)=p^{-\mathrm{ord}_pN}.
\]
\end{thm}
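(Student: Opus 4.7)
The plan is to compute $b_p(h,\lambda,0)$ directly by exploiting the diagonal shape of $M_L$ to factorize the integral over $V_p$ into $n$ one-dimensional integrals, each of which is essentially a $\tau_p$-integral. Lemma \ref{p divides N} then turns the integrand into a product of indicator functions, and the primitivity of $L$ cuts the $\sigma$-integral down to a single simple domain.

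Concretely, first I would write a typical element of $X_p = L_p + \nu$ as $v = \sum_{j=1}^n(x_j - c/N)v_j$ with $x_j \in \Z_p$, so that, using $\int_{L_p}dv = 1$,
\[
\int_{V_p}\mathbf{e}_p(\sigma\varphi(v))\lambda_p(v)\,dv
=\prod_{j=1}^n\int_{\Z_p}\mathbf{e}_p\!\left(\sigma a_j(x_j-c/N)^2\right)dx_j.
\]
Expanding $a_j(x_j-c/N)^2 = \tfrac{a_j}{N}[Nx_j^2-2cx_j] + a_j(c/N)^2$ and pulling the constant piece out of the integral identifies the $j$th factor as $\mathbf{e}_p(\sigma a_j(c/N)^2)\,\tau_p(\sigma a_j/N)$. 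Setting $h' := h - (c/N)^2\sum_j a_j$, this yields
\[
b_p(h,\lambda,0) = \int_{\Q_p}\mathbf{e}_p(-\sigma h')\prod_{j=1}^n \tau_p(\sigma a_j/N)\,d\sigma.
\]

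Next I would apply Lemma \ref{p divides N}, valid because $p$ is odd and $p \mid N$, to rewrite each $\tau_p(\sigma a_j/N)$ as the characteristic function of $\{\sigma:\sigma a_j/N\in\Z_p\}$. Since $L$ is primitive, at least one of the $a_j$ is a $p$-adic unit, so the intersection of these conditions across $j$ collapses to $\sigma/N\in\Z_p$, i.e. $\sigma\in p^{t}\Z_p$, where $t = \ord_p N$. Thus
\[
b_p(h,\lambda,0) = \int_{p^{t}\Z_p}\mathbf{e}_p(-\sigma h')\,d\sigma.
\]

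For the final step I would use the hypothesis $h - \sum_j a_j(c/N)^2 \in 8((N,4)N)^{-1}\Z$: since $p$ is odd, both $8$ and $(N,4)$ are $p$-adic units, so $Nh'\in\Z_p$, and for $\sigma = p^t\tau$ with $\tau\in\Z_p$ one has $\sigma h' = (N'/1)^{-1}(Nh')\tau \in \Z_p$ (using $N = p^t N'$ with $N'\in\Z_p^\times$). Hence $\mathbf{e}_p(-\sigma h') = 1$ throughout the domain of integration, and by the normalization $\int_{p^{t}\Z_p}d\sigma = p^{-t}$ we conclude $b_p(h,\lambda,0) = p^{-t} = p^{-\ord_p N}$.

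The main obstacle is essentially bookkeeping: keeping track of the constant exponential factors pulled out of each one-dimensional integral, correctly identifying the support coming from the primitivity hypothesis, and checking that condition~(3) on $h$ is exactly what is needed to trivialize the final exponential. No deep estimate is required — the subtlety lies in making the $\Q_p$-integral converge to a finite computation via the indicator functions produced by Lemma~\ref{p divides N}.
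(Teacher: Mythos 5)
Your proposal is correct and follows essentially the same route as the paper: factor the $V_p$-integral over the diagonal basis into the product $\prod_j\tau_p(\sigma a_j/N)$, invoke Lemma \ref{p divides N} to reduce the support to $N\Z_p$ via primitivity of $(a_1,\dots,a_n)$, and evaluate $\int_{N\Z_p}d\sigma=p^{-\ord_p N}$. Your explicit check that condition (3) on $h$ trivializes the remaining exponential factor is a detail the paper leaves implicit, but it is the same argument.
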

\begin{proof}
We directly compute
\begin{align*}
b_p(h,\lambda,0)&=\displaystyle\int_{\mathbb{Q}_p}\int_{V_p}\mathbf{e}_p(\sigma(\varphi(v)-h))\lambda_p(v)dvd\sigma\\
&=\displaystyle\int_{\mathbb{Q}_p}\int_{L_p}\mathbf{e}_p(\sigma(\varphi(v+\nu)-h))dvd\sigma
\end{align*}

Writing $v=\sum_{j=1}^n \lambda_j v_j$ in the basis $v_1,\dots,v_n$, we then plug in (from the definition \eqref{eqn:varphidef})
\[
\varphi(v+\nu)= \sum_{j=1}^n a_j\left(\lambda_j -\frac{c}{N}\right)^2
\]
and use Lemma \ref{p divides N} and $(a_1,\dots,a_n)=1$ to obtain
\begin{align*}
\int_{\mathbb{Q}_p}\int_{L_p}\mathbf{e}_p(\sigma(\varphi(v+\nu)-h))dvd\sigma&=\displaystyle\int_{\mathbb{Q}_p}\mathbf{e}_p\left(\sigma\left(\sum_{j=1}^na_j(c/N)^2-h\right)\right)\prod_{j=1}^n\tau_p\left(\frac{a_j\sigma}N\right)d\sigma\\
&=\int_{N\Z_p}d\sigma= p^{-\mathrm{ord}_pN}.
\end{align*}
In the last line we used the fact that $\prod_{j=1}^n \tau_p\left(\frac{a_j\sigma}{N}\right)=0$ unless $\sigma\in \frac{N}{(a_1,\dots,a_n)}\Z_p=N\Z_p$, as the $a_1,\dots,a_n$ are relatively prime.
\end{proof}

\subsection{\texorpdfstring{$\mathbf{p\geq 3}$}{p>=3} and \texorpdfstring{$\mathbf{p \nmid N}$}{p doesn't divide N}}\label{subsec2}
\vskip20pt
Under this assumption, $X_p=L_p$ is an integral lattice over $\mathbb{Z}_p$. Thus we can use the formula introduced in \cite[Theorem 3.1]{Yang} to obtain bounds for local densities. Suppose that $L_p$ is equivalent to $\langle b_1 p^{r_1},\dots,b_n p^{r_n}\rangle$ with $b_i\in\mathbb{Z}_p^\times$ and $r_1\leq\dots\leq r_n$.

For each integer $t>0$, set
\[
L(t,1):=\{1\leq i\leq n: r_i-t<0\ \text{is odd}\} \hspace{20mm} l(t,1):=\#L(t,1).
\]
Furthermore, we define
\[
d(t):=\displaystyle t+\frac{1}{2}\sum_{r_i<t}(r_i-t), \hspace{20mm}\varepsilon(t):=\displaystyle\left(\frac{-1}{p}\right)^{[l(t,1)/2]}\prod_{i\in L(t,1)}\left(\frac{b_i}{p}\right).
\]
Then for $h=\alpha p^a$ with $\alpha\in\mathbb{Z}_p^\times$ and $a$ is a nonnegative integer, Yang \cite{Yang} has shown that
\[
b_p(h,\lambda,0)=1+R_1(1,h,L_p),
\]
where
\[
R_1(1,h,L_p):=(1-p^{-1})\sum_{\substack {0<t\leq a\\ l(t,1)\ \text{is even}}}\varepsilon(t)p^{d(t)}+\varepsilon(a+1)p^{d(a+1)}f_1(h).
\]
with
\[
f_1(h):=
\begin{cases}
    -\frac{1}{p}, & \text{if $l(a+1,1)$ is even,} \\
    \left(\frac{\alpha}{p}\right)\frac{1}{\sqrt{p}}, & \text{if $l(a+1,1)$ is odd.}
  \end{cases}
\]

We next determine the local density in cases where local representations are guaranteed; specifically, we restrict $r_1,\dots,r_4$ and $b_1,\dots, b_4$ in a way which guarantees that the lattice $L_p$ is (locally) universal and then bound the local density by a constant which only depends on $L$.

\begin{theorem}\label{p does not divide N}
Suppose that $n\geq6$. Let $p$ be an odd prime which does not divide $N$.  If $[r_1, r_2, r_3, r_4]$ is equal to one of the following
:
\begin{enumerate}[leftmargin=*,label={\rm(\arabic*)}]
\item $[0,0,0,i],\ i\geq 0$;
\item $[0,0,i,j],\ 1\leq i\leq j$ when $p\equiv1\pmod 4$ and $\left(\frac{b_1b_2}{p}\right)=1$ or $p\equiv3\pmod 4$ and $\left(\frac{b_1b_2}{p}\right)=-1$;
\item $[0,0,1,1]$ when $p\equiv1\pmod 4$ and $\left(\frac{b_1b_2}{p}\right)=-1$ or $p\equiv3\pmod 4$ and $\left(\frac{b_1b_2}{p}\right)=1$,
\end{enumerate}
then there are absolute positive constants $c_1(L)$ and $c_2(L)$ depending only on $L$ such that
\[
c_1(L)\leq b_p(h,\lambda,0)\leq c_2(L).
\]
\end{theorem}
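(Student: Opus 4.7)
The plan is to apply Yang's explicit formula $b_p(h,\lambda,0) = 1 + R_1(1,h,L_p)$ and exploit the hypothesis $n \geq 6$ together with the constraints on $[r_1,r_2,r_3,r_4]$ to obtain uniform control of both the magnitude and the sign of $R_1$. Since $|\varepsilon(t)| = 1$ and $|f_1(h)| \leq 1$, the upper bound reduces to bounding the partial sum $\sum_{0 < t \leq a} p^{d(t)}$ uniformly in $a$. Splitting at $t = r_n$: for $t > r_n$, every $r_i$ satisfies $r_i < t$, so
\[
d(t) = \left(1 - \frac{n}{2}\right)t + \frac{1}{2}\sum_{i=1}^n r_i \leq -2t + \frac{1}{2}\sum_{i=1}^n r_i,
\]
where the inequality uses $n \geq 6$. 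This gives a geometric tail $\sum_{t > r_n} p^{d(t)}$ controlled by a constant depending only on the Jordan invariants of $L_p$, and the finitely many terms $0 < t \leq r_n$ are likewise bounded in terms of $L$. For the cofinitely many primes $p$ at which $L_p$ is unimodular (i.e.\ $r_i = 0$ for all $i$), a direct computation gives $|R_1| = O(1/p)$, and hence $b_p(h,\lambda,0) \to 1$ as $p \to \infty$.

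For the lower bound, I would argue that hypotheses (1)--(3) are precisely the conditions guaranteeing that $L_p$ is locally universal at $p$. In case (1), the sublattice $\langle b_1, b_2, b_3 \rangle$ is ternary unimodular, hence universal over $\Z_p$ by the standard theory of $p$-adic integral quadratic forms. In cases (2) and (3), the Legendre symbol condition on $b_1 b_2$ together with $p \pmod 4$ determines whether $\langle b_1, b_2 \rangle$ represents a hyperbolic plane over $\Q_p$; combined with the second Jordan block $\langle b_3 p^{r_3}, b_4 p^{r_4} \rangle$ and the assumption $n \geq 6$, this ensures universality of $L_p$. Universality forces $b_p(h,\lambda,0) > 0$ for every admissible $h$, and combined with the uniform upper bound on $|R_1|$ this yields the uniform lower bound $c_1(L)$, since only finitely many primes $p \mid 2\det(L)$ can exhibit nontrivial Jordan structure and at each such prime the positivity is absolute (not depending on $h$).

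The main obstacle will be case (3), where the leading Jordan block is anisotropic and the epsilon factor $\varepsilon(t)$ can carry sign $-1$, so bounding $1 + R_1$ away from $0$ does not follow automatically from universality. Here one must carefully compute the low-order terms $p^{d(t)}$ for $t \in \{1, 2\}$ using the specific values $r_1 = r_2 = 0$, $r_3 = r_4 = 1$, and verify by direct calculation that the cancellations driven by $\varepsilon(t)$ cannot push $R_1$ below $-1$, uniformly in $a$ and in the choice of the subsequent Jordan blocks $r_5, \dots, r_n$. The uniformity over $r_5, \dots, r_n$ should follow because each successive block only contributes terms with strictly more negative $d(t)$, whose absolute value is dominated by the geometric series produced in the upper bound step.
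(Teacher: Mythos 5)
Your overall strategy --- Yang's formula $b_p(h,\lambda,0)=1+R_1(1,h,L_p)$, bounding the exponents $d(t)$ and splitting the sum over $t$ into an initial segment plus a geometric tail --- is the same as the paper's, and your upper bound is essentially correct. The gap is in the lower bound. First, local universality of $L_p$ gives $b_p(h,\lambda,0)>0$ for each individual $h$, but since $h=\alpha p^a$ ranges over infinitely many values this does not by itself produce a constant $c_1(L)>0$ independent of $h$: as $a\to\infty$ the partial sums defining $R_1$ converge, and positivity of each truncation only gives a nonnegative limit. The assertion that ``at each such prime the positivity is absolute (not depending on $h$)'' is precisely what must be proved, and the paper proves it only via the explicit estimates on $R_1$.

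Second, and more seriously, your proposed resolution of case (3) does not close. With $[r_1,r_2,r_3,r_4]=[0,0,1,1]$ and $\varepsilon(1)=-1$ one has $d(1)=0$, and using only the first four Jordan components one gets $d(t)\le 1-t$ for all $t\ge 1$; dominating the terms $t\ge 3$ by ``the geometric series produced in the upper bound step'' then yields only
\[
\left|R_1(1,h,L_p)\right|\le (1-p^{-1})\sum_{t\ge 1}p^{1-t}=1,
\]
i.e.\ $b_p(h,\lambda,0)\ge 0$ and nothing more. The paper closes this by using $n\ge 6$ to sharpen the exponent to $d(t)\le -t$ once $t>r_n$ (all $n$ summands then enter the sum defining $d(t)$), which gives $\left|R_1\right|\le 1-p^{-r_n}\left(1-p^{-1}\right)$ and hence $c_1(L)=p^{-r_n}\left(1-p^{-1}\right)$. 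Note that this constant genuinely depends on $r_n$, so the uniformity over $r_5,\dots,r_n$ that you ask for is neither needed (the constants may depend on $L$) nor delivered by this estimate. Cases (1) and (2) avoid the difficulty because the dangerous leading term is either excluded from the sum ($l(t,1)$ odd) or has $\varepsilon(1)=+1$, the latter being exactly the isotropy condition on $\langle b_1,b_2\rangle$ that you correctly identify.
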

\begin{proof}
For Case (1), note that when $t\leq r_4$ is odd, $l(t,1)=3$ is odd; when $r_4+1\leq t\leq r_n$, $d(t)\leq r_4/2-t$; and when $t\geq r_n+1$, we have $d(t)\leq r_4/2-t-1$ (noting that $n\geq 6$, so the sum defining $d(t)$ has at least $6$ summands).
Thus
\begin{align*}
\left|R_1(1,h,L_p)\right|&\leq(1-p^{-1})\left(p^{-1}+\cdots+p^{-\lfloor r_4/2\rfloor}+\sum_{t=r_4+1}^{\infty}p^{d(t)}\right)+p^{d(1)-1/2}\\
                       &\leq(1-p^{-1})\left(p^{-1}+\cdots+p^{-\lfloor r_4/2\rfloor}+\sum_{t=r_4+1}^{r_n}p^{r_4/2-t}+\sum_{t= r_n+1}^{\infty}p^{r_4/2-t-1}\right)+p^{-1}\\
                         &\leq2p^{-1}.
\end{align*}

For Case (2), when $a=0$, it is evident that $\left|R_1(1,h,L_p)\right|\leq p^{-1/2}$. Then note that when $t=1$, $l(1,1)=2$, $\varepsilon(1)=1$ and $d(1)=0$; when $2\leq t\leq r_4$, we see that $l(t,1)$ is either odd or $l(t,1)=2$ is even and both $d(t)=0$ and $\varepsilon(t)=1$; when $t\geq r_4+1$, we write $t=r_4+i$ with $i\geq 1$ and then note that $d(t)\leq (r_3+r_4)/2-t\leq -i$.
Thus we have
\[
1-2p^{-1}\leq(1-p^{-1})\left(1-\sum_{i=1}^\infty p^{-i}\right)\leq R_1(1,h,L_p)\leq(1-p^{-1})\left(r_4+\sum_{i=1}^\infty p^{-i}\right)\leq r_4.
\]

For Case (3), we have $d(t)\leq 1-t$ when $ 1\leq t\leq r_n$, and $d(t)\leq -t$ when $t\geq r_n+1$. Thus
\begin{align*}
\left|R_1(1,h,L_p)\right|&\leq(1-p^{-1})\sum_{t=1}^{\infty}p^{d(t)}\\
                         &\leq(1-p^{-1})\left(\sum_{t=1}^{r_n}p^{d(t)}+\sum_{t= r_n+1}^{\infty}p^{d(t)}\right)\\
                         &\leq 1-\left(p^{-r_n}-p^{-(r_n+1)}\right).
\end{align*}
Hence there exist absolute positive constants $c_1(L)$ and $c_2(L)$ which depend only on $L$ such that
\[
c_1(L)\leq b_p(h,\lambda,0)\leq c_2(L).
\]
\end{proof}

\subsection{\texorpdfstring{$\mathbf{p=2}$}{p equals 2} and \texorpdfstring{$\mathbf{2\| N}$}{2 exactly divides N }}\label{subsec3}
\vskip20pt
\begin{lem}
Suppose that $2\| N$. For any positive integer $t$, the map $$x\mapsto (N/2)x^2-cx$$ is a two-to-one surjection of $\mathbb{Z}/2^t\mathbb{Z}$ onto $2\mathbb{Z}/2^t\mathbb{Z}$.
\end{lem}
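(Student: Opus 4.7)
The plan is to prove surjectivity onto $2\mathbb{Z}/2^t\mathbb{Z}$ and the two-to-one property in one shot by analyzing the fibers of the map $f(x):=(N/2)x^2-cx$ on $\mathbb{Z}/2^t\mathbb{Z}$, exactly parallel to the odd-prime case treated earlier, but now accounting for the 2-adic subtlety caused by the factor of $2$ in $N$. First I would record the key parity facts: since $2\| N$, the integer $N/2$ is odd, and since $(c,N)=1$ and $2\mid N$, $c$ is odd. These two facts make $N/2$ invertible in $\mathbb{Z}/2^t\mathbb{Z}$, which is what lets the algebra close.

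Next I would show that the image lands in $2\mathbb{Z}/2^t\mathbb{Z}$ by writing $f(x)=x\bigl((N/2)x-c\bigr)$ and checking parities: if $x$ is even, so is $f(x)$; if $x$ is odd, then $(N/2)x$ is odd and $(N/2)x-c$ is even, so again $f(x)$ is even. Hence $f$ descends to a map $\mathbb{Z}/2^t\mathbb{Z}\to 2\mathbb{Z}/2^t\mathbb{Z}$.

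For the two-to-one property, given $x,y\in\mathbb{Z}/2^t\mathbb{Z}$ I would solve $f(x)\equiv f(y)\pmod{2^t}$ via the factorization
\[
f(x)-f(y)=(x-y)\bigl[(N/2)(x+y)-c\bigr].
\]
The parity analysis splits into two cases: if $x$ and $y$ have the same parity, then $x-y$ is even but $(N/2)(x+y)-c$ is odd $-$ odd $+$ odd $\cdot$ even $=$ odd, forcing $2^t\mid x-y$, i.e., $y\equiv x\pmod{2^t}$. If $x$ and $y$ have opposite parity, then $x-y$ is odd and the bracket is even, so we need $(N/2)(x+y)\equiv c\pmod{2^t}$; since $N/2$ is a unit mod $2^t$, this has the unique solution $y\equiv c\cdot(N/2)^{-1}-x\pmod{2^t}$, and one checks (using that $c$ and $(N/2)^{-1}$ are both odd) that this $y$ indeed has parity opposite to $x$, so the two cases are consistent and non-overlapping.

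Thus each fiber of $f$ contains exactly two preimages, so $|f(\mathbb{Z}/2^t\mathbb{Z})|=2^{t-1}=|2\mathbb{Z}/2^t\mathbb{Z}|$, giving both surjectivity onto $2\mathbb{Z}/2^t\mathbb{Z}$ and the two-to-one property. The only step requiring real care is the case split on the parity of $x\pm y$ together with tracking that $N/2$ is invertible mod $2^t$; everything else is immediate. I do not expect any genuine obstacle, as the argument is the natural 2-adic analogue of the bijectivity lemma used in Section \ref{subsec1}, with the fiber size $2$ (instead of $1$) reflecting the single symmetry $x\mapsto c(N/2)^{-1}-x$ that the quadratic $f$ now carries.
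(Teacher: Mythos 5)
Your proof is correct and follows essentially the same route as the paper: both rest on the factorization $f(x)-f(y)=(x-y)\bigl[(N/2)(x+y)-c\bigr]$ and the parity analysis showing that same-parity preimages must coincide. The only difference is cosmetic: the paper stops at ``each fiber has at most two elements'' and lets the counting $2^t=2\cdot\lvert 2\mathbb{Z}/2^t\mathbb{Z}\rvert$ finish the job, whereas you also exhibit the second preimage $y\equiv c(N/2)^{-1}-x$ explicitly.
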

\begin{proof}
It suffices to show that each element of $2\mathbb{Z}/2^t\mathbb{Z}$ corresponds to at most two elements of $\mathbb{Z}/2^t\mathbb{Z}$. Suppose that $[(N/2)x^2-cx]-[(N/2)y^2-cy]\in 2^t\mathbb{Z}$ with $x,y\in\mathbb{Z}$. Then $(x-y)[(N/2)(x+y)-c]\in 2^t\mathbb{Z}$. If $x-y\in2\mathbb{Z}$, then $2\nmid (N/2)(x+y)-c$, so that $x-y\in 2^t\mathbb{Z}$. Thus we obtain that each element of $2\mathbb{Z}/2^t\mathbb{Z}$ corresponds to at most two elements of $\mathbb{Z}/2^t\mathbb{Z}$.
\end{proof}

\begin{lemma}\label{2 parallel N}
Suppose that $2\| N$. Then
\[
\tau_2(\sigma)=
\begin{cases}
    1, & \text{if }\sigma\in 2^{-2}\mathbb{Z}_p, \\
    0, & \text{if }\sigma\notin 2^{-2}\mathbb{Z}_p.
\end{cases}
\]
\end{lemma}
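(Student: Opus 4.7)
The plan is to reduce $\tau_2(\sigma)$ to an elementary additive character sum by factoring $Nx^2-2cx = 2((N/2)x^2-cx)$ using $2\|N$, and then invoking the preceding two-to-one surjection lemma to evaluate that sum.

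If $\sigma\in\Z_2$ the integrand is identically $1$ and $\tau_2(\sigma)=1$ trivially, so I would assume $\sigma = 2^{-t}\alpha$ with $\alpha\in\Z_2^{\times}$ and $t\ge 1$. Using $N=2M$ with $M$ odd, the argument of $\mathbf{e}_2$ becomes $\sigma[Nx^2-2cx] = 2^{1-t}\alpha\bigl[(N/2)x^2-cx\bigr]$. A direct check (expanding $(x+y)^2$ and using $y\in 2^t\Z_2$) shows this argument is $2^t\Z_2$-periodic in $x$, so partitioning $\Z_2$ into cosets of measure $2^{-t}$ yields
\[
\tau_2(\sigma) = 2^{-t}\sum_{x\in\Z/2^t\Z}\mathbf{e}_2\!\Bigl(2^{1-t}\alpha\bigl[(N/2)x^2-cx\bigr]\Bigr).
\]

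Next I would apply the preceding lemma: as $x$ ranges over $\Z/2^t\Z$ the value $(N/2)x^2-cx$ traverses $2\Z/2^t\Z$ and hits each such element exactly twice. Substituting $y=(N/2)x^2-cx$ and then writing $y=2z$ with $z\in\Z/2^{t-1}\Z$ gives
\[
\tau_2(\sigma) = 2^{1-t}\sum_{z\in\Z/2^{t-1}\Z}\mathbf{e}_2(2^{2-t}\alpha z),
\]
which is (up to normalization) an additive character sum on $\Z/2^{t-1}\Z$ with character $z\mapsto\mathbf{e}_2(2^{2-t}\alpha z)$.

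Finally I would split on whether $\sigma\in 2^{-2}\Z_2$. When $t\le 2$ the exponent $2^{2-t}\alpha z$ lies in $\Z_2$, the character is trivial, and the sum equals its cardinality $2^{t-1}$, giving $\tau_2(\sigma)=1$. When $t\ge 3$, since $\alpha\in\Z_2^{\times}$ the element $2^{2-t}\alpha$ has exact denominator $2^{t-2}>1$, so the character on $\Z/2^{t-1}\Z$ is nontrivial and the sum vanishes, giving $\tau_2(\sigma)=0$. The only mildly delicate point is verifying the $2^t\Z_2$-periodicity of the integrand so the integral collapses to a finite sum; once this and the preceding lemma are in hand, the rest is bookkeeping.
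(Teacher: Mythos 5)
Your proof is correct and takes essentially the same route as the paper's: both factor $Nx^2-2cx=2[(N/2)x^2-cx]$, collapse the integral to a finite exponential sum by periodicity, and invoke the preceding two-to-one surjection lemma to rewrite it as an additive character sum that vanishes precisely when $t\geq 3$. The only differences are bookkeeping ones: you start from the sum over $\mathbb{Z}/2^t\mathbb{Z}$ where the paper starts from $\mathbb{Z}/2^{t-1}\mathbb{Z}$, and you verify the $t\leq 2$ case explicitly where the paper treats it as obvious.
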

\begin{proof}
Suppose that $\sigma=2^{-t}\alpha$ with $\alpha\in\mathbb{Z}_2^\times$ and $t\geq3$. Then, following the same calculation as in the proof of Lemma \ref{p divides N}, we have
\[
\tau_2(2^{-t}\alpha)=2^{1-t}\displaystyle\sum_{x\in\mathbb{Z}/2^{t-1}\mathbb{Z}}\mathbf{e}_2(2^{1-t}\alpha[(N/2)x^2-cx])=2^{2-t}\displaystyle\sum_{y\in\mathbb{Z}/2^{t-2}\mathbb{Z}}\mathbf{e}_2(2^{2-t}\alpha y)=0.
\]
Therefore $\tau_2(\sigma)$ is $0$ or $1$ according to $\sigma\notin 2^{-2}\mathbb{Z}_2$ or $\sigma\in 2^{-2}\mathbb{Z}_2$.
\end{proof}

\vskip5pt
\begin{thm}
Suppose that $2\| N$. Then
\[
b_2(h,\lambda, 0)=2=2^{-(\mathrm{ord}_2N-2)}.
\]
\end{thm}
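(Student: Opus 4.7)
The plan is to mimic the proof of Theorem \ref{thm:pdivN}, substituting Lemma \ref{2 parallel N} for Lemma \ref{p divides N} and carefully tracking where the indicator of support for $\prod_j \tau_2$ and the exponential factor interact. Starting from the definition
\[
b_2(h,\lambda,0)=\int_{\mathbb{Q}_2}\int_{L_2}\mathbf{e}_2\bigl(\sigma(\varphi(v+\nu)-h)\bigr)\,dv\,d\sigma,
\]
I would write $v=\sum_{j=1}^n \lambda_j v_j$ and expand $\varphi(v+\nu)=\sum_j a_j(\lambda_j-c/N)^2$ to split the inner integral as a product over $j$. Just as in Theorem \ref{thm:pdivN}, the $\lambda_j$-integral produces $\mathbf{e}_2(\sigma a_j c^2/N^2)\,\tau_2(\sigma a_j/N)$, and after factoring out the terms independent of $\sigma$ one arrives at
\[
b_2(h,\lambda,0)=\int_{\mathbb{Q}_2}\mathbf{e}_2\!\left(\sigma\!\left(\sum_{j=1}^n a_j(c/N)^2-h\right)\right)\prod_{j=1}^n \tau_2\!\left(\frac{a_j\sigma}{N}\right)d\sigma.
\]

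Next I would determine the support of $\prod_j \tau_2(a_j\sigma/N)$ using Lemma \ref{2 parallel N}: each factor equals $1$ when $a_j\sigma/N\in 2^{-2}\mathbb{Z}_2$ and $0$ otherwise. Since $2\|N$, this condition amounts to $\operatorname{ord}_2(\sigma)\geq -1-\operatorname{ord}_2(a_j)$, and the joint condition is therefore $\operatorname{ord}_2(\sigma)\geq -1-\min_j \operatorname{ord}_2(a_j)$. Invoking the primitivity of $L$ (hypothesis (1), giving $\gcd(a_1,\dots,a_n)=1$) forces $\min_j \operatorname{ord}_2(a_j)=0$, so the product is the characteristic function of $\sigma\in 2^{-1}\mathbb{Z}_2$.

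The last step, which is the main place where care is needed, is to verify that the exponential factor is identically $1$ on this support. Condition (3) with $(N,4)=2$ gives $h-\sum_j a_j(c/N)^2\in (4/N)\mathbb{Z}$, an element of $2$-adic valuation at least $2-\operatorname{ord}_2(N)=1$. Multiplying by $\sigma\in 2^{-1}\mathbb{Z}_2$ produces an element of $\mathbb{Z}_2$, so $\mathbf{e}_2(\sigma(\sum_j a_j(c/N)^2-h))=1$ throughout the region of integration. The integral therefore collapses to
\[
b_2(h,\lambda,0)=\int_{2^{-1}\mathbb{Z}_2}d\sigma = 2,
\]
which agrees with $2^{-(\operatorname{ord}_2 N-2)}=2^{-(1-2)}=2$, completing the argument. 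The main obstacle is purely bookkeeping: making sure the primitivity hypothesis and the congruence condition on $h$ in hypothesis (3) combine exactly to give both a support equal to $2^{-1}\mathbb{Z}_2$ and triviality of the phase there.
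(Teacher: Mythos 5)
Your proof is correct and follows essentially the same computation as the paper's: factor the inner integral into a product of $\tau_2$ factors, apply Lemma \ref{2 parallel N} together with the primitivity of $L$ to identify the support of the $\sigma$-integral as $2^{-1}\mathbb{Z}_2$, and integrate. You are in fact slightly more careful than the paper, which leaves implicit the verification (via condition (3) with $(N,4)=2$) that the exponential phase is identically $1$ on that support.
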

\begin{proof}
Following the calculation in the proof of Theorem \ref{thm:pdivN}, we compute
\begin{align*}
b_2(h,\lambda,0)&=\displaystyle\int_{\mathbb{Q}_2}\int_{V_2}\mathbf{e}_2(\sigma(\varphi(v)-h))\lambda_2(v)dvd\sigma\\
&=\displaystyle\int_{\mathbb{Q}_2}\int_{L_2}\mathbf{e}_2(\sigma(\varphi(v+\nu)-h))dvd\sigma\\
&=\displaystyle\int_{\mathbb{Q}_2}\mathbf{e}_2\left(\sigma\left(\sum_{j=1}^na_j(c/N)^2-h\right)\right)\prod_{j=1}^n\tau_2\left(\frac{a_j\sigma}{N}\right)d\sigma\\
&=\int_{ 2^{-1}\Z_2} d\sigma=2.
\end{align*}
\end{proof}

\subsection{\texorpdfstring{$\mathbf{p=2}$}{p equals 2} and \texorpdfstring{$\mathbf{4\mid N}$}{4 divides N}}\label{subsec4}
\vskip20pt
The argument here is similar to that in Subsection \ref{subsec1}. Nonetheless, we provide it here for the sake of completeness and clarity.

\begin{lem}
Suppose that $4\mid N$. For any positive integer $t$, the map $$x\mapsto (N/2)x^2-cx$$ is a bijection of $\mathbb{Z}/2^t\mathbb{Z}$ onto itself.
\end{lem}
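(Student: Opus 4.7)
The plan is to mimic exactly the proof of the analogous lemma in Subsection \ref{subsec1} (the odd case $p \mid N$), since the hypothesis $4 \mid N$ is what guarantees the parity obstruction that made the odd-prime argument work.

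First I would note that it suffices to prove injectivity, since $\Z/2^t\Z$ is a finite set. So suppose $x,y \in \Z$ satisfy
\[
\bigl[(N/2)x^2-cx\bigr]-\bigl[(N/2)y^2-cy\bigr]\in 2^t\Z.
\]
Factoring yields $(x-y)\bigl[(N/2)(x+y)-c\bigr]\in 2^t\Z$.

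Next I would observe that since $4\mid N$, the integer $N/2$ is even, so $(N/2)(x+y)$ is even; and since $(c,N)=1$ and $2\mid N$, the integer $c$ is odd. Therefore $(N/2)(x+y)-c$ is odd, i.e., a unit in $\Z_2$. Consequently the full $2$-adic valuation $2^t$ must be absorbed by the other factor, giving $x-y\in 2^t\Z$, as required.

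The argument is essentially a one-line parity check; there is no real obstacle. The only point to be careful about is that the hypothesis $4\mid N$ (rather than merely $2\mid N$) is what forces $N/2$ to be even, which in turn is what makes $(N/2)(x+y)-c$ odd irrespective of the parities of $x$ and $y$. This is precisely the place where the case $4\mid N$ diverges from the case $2\|N$ treated in Subsection \ref{subsec3}, where $N/2$ is odd and the map is only two-to-one onto $2\Z/2^t\Z$.
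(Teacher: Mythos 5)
Your proof is correct and follows essentially the same route as the paper's: reduce to injectivity by finiteness, factor the difference as $(x-y)\bigl[(N/2)(x+y)-c\bigr]$, and observe that the second factor is odd because $c$ is odd and $N/2$ is even. In fact you are slightly more explicit than the paper, which only remarks that $2\nmid c$ and leaves the evenness of $(N/2)(x+y)$ (the point where $4\mid N$ is actually used) implicit.
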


\begin{proof}
Since $\mathbb{Z}/2^t\mathbb{Z}$ is finite, it is enough to show that this map is injective. Suppose that $[(N/2)x^2-cx]-[(N/2)y^2-cy]\in 2^t\mathbb{Z}$ with $x,y\in\mathbb{Z}$. Then $(x-y)[(N/2)(x+y)-c]\in 2^t\mathbb{Z}$. Since $(c,N)=1$, $2\nmid c$. Therefore $x-y\in 2^t\mathbb{Z}$ and this map is injective.
\end{proof}

It is obvious that $\tau_2(\sigma)=1$ when $\sigma\in2^{-1}\mathbb{Z}_2$. Now suppose that $\sigma=2^{-t}\alpha$ with $\alpha\in\mathbb{Z}_2^\times$ and $t\geq2$. Then we have
$$\tau_2(2^{-t}\alpha)=2^{1-t}\displaystyle\sum_{x\in\mathbb{Z}/2^{t-1}\mathbb{Z}}\mathbf{e}_2(2^{1-t}\alpha[(N/2)x^2-cx])=2^{1-t}\displaystyle\sum_{y\in\mathbb{Z}/2^{t-1}\mathbb{Z}}\mathbf{e}_2(2^{1-t}\alpha y)=0.
$$
Therefore $\tau_2(\sigma)$ is $0$ or $1$, depending on whether
$\sigma\notin 2^{-1}\mathbb{Z}_2$ or $\sigma\in 2^{-1}\mathbb{Z}_2$.

\begin{thm}
Suppose that $4\mid N$. Then
\[
b_2(h,\lambda,0)=2^{-(\mathrm{ord}_2N-1)}.
\]
\end{thm}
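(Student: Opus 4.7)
The plan is to mimic the structure already used in Theorem~\ref{thm:pdivN} and the $2\|N$ theorem of Subsection \ref{subsec3}, with the modified support of $\tau_2$ coming from the $4\mid N$ case.

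First I would substitute the definitions into the local density integral. Writing $v = \sum_{j=1}^n \lambda_j v_j$ in the basis $v_1,\dots,v_n$, the formula \eqref{eqn:varphidef} gives $\varphi(v+\nu) = \sum_{j=1}^n a_j(\lambda_j - c/N)^2$, so the $V_2$-integral splits as a product over coordinates and we obtain
\[
b_2(h,\lambda,0) = \int_{\Q_2} \mathbf{e}_2\!\left(\sigma\Bigl(\sum_{j=1}^n a_j (c/N)^2 - h\Bigr)\right) \prod_{j=1}^n \tau_2\!\left(\frac{a_j\sigma}{N}\right) d\sigma,
\]
exactly as in the earlier theorems.

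Next I would invoke the computation preceding the theorem, which shows that $\tau_2(\sigma') \in \{0,1\}$ and equals $1$ precisely when $\sigma' \in 2^{-1}\Z_2$. Thus $\prod_j \tau_2(a_j\sigma/N) \neq 0$ if and only if $a_j \sigma/N \in 2^{-1}\Z_2$ for every $j$, i.e.\ $\mathrm{ord}_2(\sigma) \geq \mathrm{ord}_2(N) - 1 - \mathrm{ord}_2(a_j)$ for all $j$. Since $L$ is primitive, $\min_j \mathrm{ord}_2(a_j) = 0$, so the support reduces to $\sigma \in 2^{\mathrm{ord}_2 N - 1}\Z_2 = (N/2)\Z_2$, on which the product of $\tau_2$ factors equals $1$.

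I then need to check that the exponential factor is trivial on this support. Condition (3) in the setup states $h - \sum_j a_j (c/N)^2 \in 8((N,4)N)^{-1}\Z$; since $4\mid N$ we have $(N,4)=4$, so the difference lies in $2N^{-1}\Z$. For $\sigma \in (N/2)\Z_2$, the $2$-adic valuation of $\sigma\bigl(\sum_j a_j(c/N)^2 - h\bigr)$ is therefore non-negative, so $\mathbf{e}_2$ evaluates to $1$. The integral collapses to $\int_{(N/2)\Z_2} d\sigma = 2^{-(\mathrm{ord}_2 N - 1)}$, as claimed.

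The argument is essentially routine once the support analysis of $\tau_2$ is in hand, so there is no serious obstacle. The only step that requires care is the final verification that the exponential factor is $1$ on the entire support $(N/2)\Z_2$; this is where the precise normalization in condition (3) (in particular, the factor $(N,4)$) enters and must be checked against the $4\mid N$ hypothesis.
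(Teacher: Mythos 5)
Your proof is correct and follows the same route as the paper: factor the local density integral into the product $\prod_{j}\tau_2(a_j\sigma/N)$, use the support computation of $\tau_2$ established just before the theorem together with primitivity of $L$ to reduce to $\sigma\in(N/2)\Z_2$, and observe that condition (3) with $(N,4)=4$ makes the exponential factor trivial there, leaving $\int_{(N/2)\Z_2}d\sigma=2^{-(\mathrm{ord}_2N-1)}$. The paper states the final step without spelling out the support and exponential checks, so your writeup is simply a more explicit version of the same argument.
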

\begin{proof}
Calculating the same as in the proof of Theorem \ref{thm:pdivN}, we obtain
\begin{align*}
b_2(h,\lambda,0)&=\displaystyle\int_{\mathbb{Q}_2}\int_{V_2}\mathbf{e}_2(\sigma(\varphi(v)-h))\lambda_2(v)dvd\sigma\\
&=\displaystyle\int_{\mathbb{Q}_2}\int_{L_2}\mathbf{e}_2(\sigma(\varphi(v+\nu)-h))dvd\sigma\\
&=\displaystyle\int_{\mathbb{Q}_2}\mathbf{e}_2\left(\sigma\left(\sum_{j=1}^na_j(c/N)^2-h\right)\right)\prod_{j=1}^n\tau_2\left(\frac{a_j\sigma}N\right)d\sigma\\
&=2^{-(\mathrm{ord}_2N-1)}.\nonumber
\end{align*}
\end{proof}

\subsection{\texorpdfstring{$\mathbf{p=2}$}{p equals 2} and \texorpdfstring{$\mathbf{2\nmid N}$}{2 does not divide N}}\label{subsec5}
\vskip20pt
In this case, $\nu\in L_2$. Hence $X_2=L_2$ and we can apply \cite[Theorem 4.1]{Yang} to obtain bounds for local densities. Since $L_2$ is independent of $N$, the local density clearly only depends on the lattice $L$, but it remains to show that the density is positive, for which we need to restrict the possible choice of lattice.
Suppose that $L_2$ is equivalent to $\langle b_12^{r_1},...,b_n2^{r_n}\rangle$ with $b_i\in\mathbb{Z}_2^\times$ and $r_1\leq\cdots\leq r_n$.

For each integer $t>0$ we denote
\begin{align*}
L(t,1)&:=\{r_i : r_i-t<0\ \text{is odd}\}, &l(t,1)&:=\#L(t,1),\\
\varepsilon(t)&:=\prod_{i\in L(t-1,1)}b_i, &d(t)&:=t+\frac{1}{2}\sum_{r_i<t-1}(r_i-t+1),\\
\delta(t)&:=
  \begin{cases}
    0 & \text{if $r_i=t-1$ for some $i$,} \\
    1 & \text{otherwise.}
  \end{cases}
&\displaystyle\left(\frac{2}{x}\right)&:=
  \begin{cases}
    (2,x)_2, & \text{if }x\in\mathbb{Z}_2^\times, \\
    0, & \text{otherwise.}
  \end{cases}
\end{align*}
Furthermore, for $h=\alpha2^a$ with $\alpha\in\mathbb{Z}_2^\times$ and $a$ is a nonnegative integer. we define
\begin{multline*}
R_1(1,h,L_2):=\sum_{\substack {1<t\leq a+3\\ l(t-1,1)\ \text{is odd}}}\delta(t)\left(\frac{2}{\mu\varepsilon(t)}\right)2^{d(t)-3/2}\\
+\sum_{\substack {1<t\leq a+3\\ l(t-1,1)\ \text{is even}}}\delta(t)\left(\frac{2}{\varepsilon(t)}\right)2^{d(t)-1}\mathbf{e}_2\left(\frac{\mu}{8}\right)\mathrm{char}(4\mathbb{Z}_2)(\mu),
\end{multline*}
where $\mu=\mu_t(h)$ is given by $\mu_t(h):=\alpha2^{a+3-t}-\sum_{r_i<t-1}b_i$ and $\mathrm{char}(Y)$ stands for the characteristic function of a set $Y$. Then
Yang \cite{Yang} has shown that
\[
b_2(h,\lambda,0)=1+R_1(1,h, L_2).
\]

\vskip10pt
\begin{thm}\label{2 does not divide N}
Suppose that $n\geq6$ and 2 does not divide $N$. If $[r_1, r_2, r_3, r_4]$ is equal to one of the following:
\begin{enumerate}
\item $[0,0,0,i],\ 0\leq i\leq 2$;
\item $[0,0,1,i],\ 1\leq i\leq 3$;
\item $[0,1,1,i],\ 1\leq i\leq 2$;
\item $[0,1,2,i],\ 2\leq i\leq 3$.
\end{enumerate}
Then there is an absolute positive number $d(L)$ depends only on $L$ such that
$$d(L)\leq b_2(h,\lambda,0)\leq 2.$$
\end{thm}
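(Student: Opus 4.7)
The plan is to apply Yang's formula $b_2(h,\lambda,0) = 1 + R_1(1,h,L_2)$ and estimate the correction term $R_1$ in each of the four configurations. Since every factor inside each summand of $R_1$ has absolute value at most $1$ (because $|(2/\cdot)| \leq 1$, $|\mathbf{e}_2(\mu/8)| = 1$, and $\delta(t), \mathrm{char}(4\mathbb{Z}_2)(\mu) \in \{0,1\}$) and $2^{d(t)-3/2} \leq 2^{d(t)-1}$, the triangle inequality yields
\[
|R_1(1,h,L_2)| \leq \sum_{t=2}^{a+3} 2^{d(t)-1}.
\]
The task therefore reduces to a uniform-in-$h$ bound for $\sum_t 2^{d(t)}$, which depends only on the Jordan invariants $r_1 \leq \cdots \leq r_n$ of $L_2$.

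The second step is to analyze the growth of $d(t) = t + \tfrac12 \sum_{r_i < t-1}(r_i - t + 1)$. For $t \leq r_1 + 1$, no $r_i$ contributes, so $d(t) = t$; once $t > r_n + 1$, every $r_i$ contributes, giving $d(t) = (1 - n/2)\,t + \tfrac12(\sum_i r_i + n)$, which for $n \geq 6$ is at most $-2t + C(L)$. Consequently the tail $\sum_{t > r_n + 1} 2^{d(t)}$ is dominated by a geometric series of ratio at most $1/4$ and contributes $O(2^{-r_n})$. In the intermediate range $r_1 + 1 < t \leq r_n + 1$, there are only finitely many terms, and I would verify case-by-case, using that $[r_1,r_2,r_3,r_4]$ is restricted to one of the lists in (1)--(4) and $r_5,\ldots,r_n \geq r_4$, that $\max_t d(t)$ is bounded uniformly and the total number of contributing $t$'s is $O_L(1)$.

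Assembling these estimates will give $|R_1(1,h,L_2)| \leq 1$, and hence the upper bound $b_2(h,\lambda,0) \leq 2$. The main obstacle is the lower bound: the intermediate-range case analysis must be refined precisely enough to establish $|R_1(1,h,L_2)| \leq C_0(L)$ for some \emph{strict} constant $C_0(L) < 1$, so that $d(L) := 1 - C_0(L) > 0$. This is most delicate in case (3) and the upper end of case (4), where $r_2,r_3,r_4$ are all positive and the first few summands of $R_1$ are of comparable size with no geometric decay yet in force. In those ranges one must track the sign of $(2/\mu\varepsilon(t))$, the phase $\mathbf{e}_2(\mu/8)$, and the support condition $\mathrm{char}(4\mathbb{Z}_2)(\mu)$ simultaneously, in the spirit of the proof of Case (2) of Theorem \ref{p does not divide N}, in order to rule out $R_1$ approaching $-1$. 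Since the configurations $[r_1,\ldots,r_n]$ compatible with (1)--(4) form a finite family for fixed $L$, this finite check produces the claimed positive constant $d(L)$.
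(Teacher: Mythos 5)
Your overall route is the same as the paper's: apply Yang's formula $b_2(h,\lambda,0)=1+R_1(1,h,L_2)$, bound $|R_1|$ term by term via the exponents $d(t)$, and split into a finite intermediate range plus a geometrically decaying tail (the paper organizes this as Lemmas \ref{lemma4} and \ref{lemma5}, splitting at $t=r_4+2$ and $t=r_n+2$). However, there is a genuine gap, and it sits exactly where you defer the work. Your displayed bound $|R_1|\leq\sum_{t=2}^{a+3}2^{d(t)-1}$ discards the factor $\delta(t)$, and without it the right-hand side exceeds $1$ in several of the allowed configurations, so the conclusion ``assembling these estimates will give $|R_1|\leq 1$'' does not follow. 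Concretely, for $[r_1,r_2,r_3,r_4]=[0,1,2,3]$ one computes $d(2)=2+\tfrac12(0-1)=\tfrac32$, $d(3)=\tfrac32$, $d(4)=1$, so the terms for $t=2,3,4$ are each of size at least $2^{d(t)-3/2}\geq 2^0=1$ before any decay sets in; the only reason they do not destroy the bound is that $\delta(2)=\delta(3)=\delta(4)=0$ (since $1,2,3$ all occur among the $r_i$). The vanishing of the early terms via $\delta(t)$ is the essential mechanism in the range $2\leq t\leq r_4+1$, not a refinement one can postpone: it is precisely because this mechanism fails for $[0,0,0,2]$ and $[0,0,1,3]$ (there is no $r_i$ equal to $1$, resp.\ $2$, so a surviving term of size $2^{d(t)-3/2}=\tfrac12$ remains) that the paper must prove the sharpened bounds $2^{r_4-r_n-2}$ and $2^{-2}+\cdots+2^{r_4-r_n}$ in those two cases to keep the total strictly below $1$.

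A secondary point: your suggestion that one must track the sign of $\left(\frac{2}{\mu\varepsilon(t)}\right)$, the phase $\mathbf{e}_2(\mu/8)$, and the support condition $\mathrm{char}(4\Z_2)(\mu)$ to rule out $R_1$ approaching $-1$ is not how the paper proceeds and is not needed. The paper obtains $|R_1|\leq 1-c(L)$ for an explicit $c(L)>0$ purely from absolute values, using $\delta(t)$ together with the precise bookkeeping of $d(t)$ in the ranges $r_4+2\leq t\leq r_n$ and $t\geq r_n+2$ (Lemmas \ref{lemma4} and \ref{lemma5} sum to $1-3\cdot 2^{r_4-r_n-1}<1$, with room left over for the surviving small-$t$ contribution in the two exceptional cases). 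To repair your argument, reinstate $\delta(t)$, verify that for each listed configuration every $t$ with $2\leq t\leq r_4+1$ either has $\delta(t)=0$ or contributes at most $\tfrac12$, and then carry out the quantitative tail estimates with enough precision that the grand total is strictly less than $1$.
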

\begin{proof}
The proof of Theorem \ref{2 does not divide N} is straightforward by combining the results of
the following two lemmas.
\end{proof}

\begin{lem}\label{lemma4}
Suppose that $n\geq6$ is even, 2 does not divide $N$ and $[r_1,r_2,r_3,r_4]$ satisfies one of conditions in Theorem \ref{2 does not divide N}. Then
$$\sum_{\substack {t\geq r_n+2 \\ l(t-1,1)\ \text{is odd}}}2^{d(t)-3/2}+\sum_{\substack {t\geq r_n+2\\ l(t-1,1)\ \text{is even}}}2^{d(t)-1}\leq2^{r_4-r_n-1}.$$
Furthermore, when $[r_1,r_2,r_3,r_4]=[0,0,0,2]$ or $[0,0,1,3]$, this upper bound can be improved to $2^{r_4-r_n-2}$.
\end{lem}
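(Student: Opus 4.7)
The key observation is that for $t\geq r_n+2$ one has $r_i\leq r_n<t-1$ for every $i$, so the defining sum of $d(t)$ runs over all indices and becomes a linear function of $t$:
\[
d(t)=t+\frac{1}{2}\sum_{i=1}^{n}(r_i-t+1)=\Bigl(1-\frac{n}{2}\Bigr)t+\frac{n+\Sigma}{2},\qquad \Sigma:=\sum_{i=1}^{n}r_i.
\]
In particular $d(t+1)-d(t)=1-n/2\leq -2$ whenever $n\geq 6$, so both sums in the statement are dominated by geometric series with common ratio at most $2^{1-n/2}\leq 1/4$.

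The plan is: (i) replace $2^{d(t)-3/2}$ by the larger $2^{d(t)-1}$ to merge the two sums into a single geometric series; (ii) evaluate that series; (iii) bound $d(r_n+2)$ from above using the constraints on $[r_1,r_2,r_3,r_4]$. Step (ii) gives
\[
\sum_{t\geq r_n+2}2^{d(t)-1}=\frac{2^{d(r_n+2)-1}}{1-2^{1-n/2}}\leq\frac{4}{3}\cdot 2^{d(r_n+2)-1}.
\]
For step (iii), the trivial estimate $\Sigma\leq(r_1+r_2+r_3+r_4)+(n-4)r_n$ (tight when $r_5=\cdots=r_n=r_n$), substituted into the formula for $d(r_n+2)$, produces
\[
d(r_n+2)\leq -r_n+2-\frac{n}{2}+\frac{r_1+r_2+r_3+r_4}{2}.
\]
Taking logarithms, the desired inequality $\frac{4}{3}\cdot 2^{d(r_n+2)-1}\leq 2^{r_4-r_n-1}$ reduces to the clean condition
\[
r_1+r_2+r_3-r_4\leq n-6+2\log_2\tfrac{3}{2}.
\]

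For $n\geq 6$ the right-hand side is at least $2\log_2(3/2)>1$, while a direct inspection of the ten admissible quadruples $[r_1,r_2,r_3,r_4]$ from the preceding theorem shows that the left-hand side is at most $1$, the extremal cases being $[0,1,1,1]$ and $[0,1,2,2]$. The sharper bound $2^{r_4-r_n-2}$ in the two exceptional cases $[0,0,0,2]$ and $[0,0,1,3]$ follows by exactly the same argument, now reducing to the condition $r_1+r_2+r_3-r_4\leq n-8+2\log_2\tfrac{3}{2}$; both exceptional quadruples satisfy $r_1+r_2+r_3-r_4=-2$, which is below $-2+2\log_2(3/2)>-1$ for $n\geq 6$, so the improvement goes through.

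The only real obstacle is careful bookkeeping of the $\log_2(3/2)\approx 0.585$ slack produced by the geometric-series factor $4/3$: in the two tight quadruples $[0,1,1,1]$ and $[0,1,2,2]$ the basic bound holds with essentially no margin, so one must be precise with constants and confirm that the worst case truly occurs at $r_5=\cdots=r_n=r_n$. No conceptual difficulty arises once the linearization of $d(t)$ for $t\geq r_n+2$ is recognized; the remainder of the proof is a short computation and a case check.
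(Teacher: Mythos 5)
Your argument is correct, and it is a genuinely streamlined variant of the paper's proof. Both proofs rest on the same starting observation — for $t\geq r_n+2$ every $r_i<t-1$, so $d(t)$ linearizes with slope $1-\tfrac n2\leq -2$ and the tail is geometric — but they diverge in execution. The paper bounds termwise, showing $2^{d(t)-3/2}$ or $2^{d(t)-1}\leq 2^{r_4-t}$ and then summing $\sum_{t\geq r_n+2}2^{r_4-t}=2^{r_4-r_n-1}$; to make the termwise bound work it must split into the cases $n=6$ versus $n\geq 8$, and in the two tight quadruples $[0,1,1,1]$ and $[0,1,2,2]$ with $n=6$ and $r_5=r_6$ it crucially invokes the parity of $l(t-1,1)$ to get the smaller exponent $d(t)-3/2$ (indeed, in that configuration $d(r_n+2)-1>r_4-(r_n+2)$, so the termwise bound with exponent $d(t)-1$ would fail at the first term). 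You discard the $-3/2$ versus $-1$ distinction entirely and instead sum the geometric series exactly, trading the paper's ratio $1/2$ (implicit in $\sum 2^{r_4-t}$) for the true ratio $2^{1-n/2}\leq 1/4$; the resulting factor $\tfrac43$ in place of $2$ supplies exactly the slack $2\log_2(3/2)-1\approx 0.17$ needed to absorb the tight cases, reducing everything to the single inequality $r_1+r_2+r_3-r_4\leq n-6+2\log_2(3/2)$ checked over the ten quadruples. Your reduction and the verification of the extremal cases (maximum $1$ at $[0,1,1,1]$ and $[0,1,2,2]$, and $-2$ for the two exceptional quadruples) are all correct, as is the observation that $\Sigma$ is maximized at $r_5=\cdots=r_n$; what you gain is a shorter, case-free argument, at the cost of a numerical margin that is genuinely thin ($\tfrac{4}{3}2^{-1/2}\approx 0.943$) rather than the paper's exact termwise match.
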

\begin{proof}
Let $s:=\frac{1}{2}(r_1+r_2+r_3+r_4)$. We first suppose that $n=6$ and compute the contribution for the terms $r_6+2\leq t<\infty$.  For the cases $[r_1,r_2,r_3,r_4]=[0,1,1,1]$ and $[0,1,2,2]$, if $r_5=r_6$, then $l(t-1,1)$ is odd and $d(t)-3/2\leq s-1/2-t\leq r_4-t$, while if $r_5<r_6$, then $d(t)-3/2< d(t)-1\leq s-1/2-t\leq r_4-t$. For the remaining cases, we have $s\leq r_4$, and hence $d(t)-3/2<d(t)-1\leq s-t\leq r_4-t$.

We next assume that $n\geq 8$ and $r_n+2\leq t<\infty$.
Since there are at least $8$ summands and $s\leq r_4+1/2$ in all cases satisfying the conditions of Theorem \ref{2 does not divide N}, we have $d(t)-3/2<d(t)-1\leq s-t-1\leq r_4-t$.

Combining the above cases, we conclude that
\[
\sum_{\substack {t\geq r_n+2\\ l(t-1,1)\ \text{is odd}}}2^{d(t)-3/2}+\sum_{\substack {t\geq r_n+2\\ l(t-1,1)\ \text{is even}}}2^{d(t)-1}\leq\sum_{t\geq r_n+2}2^{r_4-t}\leq2^{r_4-r_n-1}.
\]
Moreover, when $[r_1,r_2,r_3,r_4]=[0,0,0,2]$ or $[0,0,1,3]$ and $r_n+2\leq t<\infty$,
we have $d(t)-1\leq s-t\leq r_4-1-t$, which improves the above bound by a factor of $1/2$.
\end{proof}

\vskip10pt
\begin{lem}\label{lemma5}
Suppose that $n\geq6$, 2 does not divide $N$, $[r_1,r_2,r_3,r_4]$ satisfies one of conditions in Theorem \ref{2 does not divide N}, and $r_4+2\leq r_n$. Then
\[
\sum_{\substack {r_4+2\leq t\leq r_n\\ l(t-1,1)\ \text{is odd}}}\delta(t)2^{d(t)-3/2}+\sum_{\substack {r_4+2\leq t\leq r_n\\ l(t-1,1)\ \text{is even}}}\delta(t)2^{d(t)-1}\leq 2^{-1}+\cdots+2^{r_4-r_n+1}.
\]
Furthermore, when $[r_1,r_2,r_3,r_4]=[0,0,0,2]$ or $[0,0,1,3]$, we can improve this upper bound to $2^{-2}+\cdots+2^{r_4-r_n}$.
\end{lem}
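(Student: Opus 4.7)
The plan is to bound each summand individually and then sum the resulting geometric series. Specifically, I will prove that for every $r_4+2 \leq t \leq r_n$,
$$
\delta(t) 2^{d(t) - \alpha(t)} \leq 2^{-(t-r_4-1)},
$$
where $\alpha(t) := 3/2$ when $l(t-1,1)$ is odd and $\alpha(t) := 1$ when $l(t-1,1)$ is even. Summing over $t$ yields the main bound $2^{-1}+\cdots+2^{r_4-r_n+1}$. For the refinement in the two special cases $[0,0,0,2]$ and $[0,0,1,3]$ I will show the sharper pointwise inequality $\delta(t) 2^{d(t)-\alpha(t)} \leq 2^{-(t-r_4)}$.

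Setting $s := \frac{1}{2}(r_1+r_2+r_3+r_4)$, a direct calculation yields $d(r_4+2) + (r_4+2) \leq s+2$, with equality when $r_5 > r_4$. Combined with the recursion $d(t) - d(t-1) = 1 - k(t)/2$, where $k(t) := \#\{i: r_i < t-1\}$ and $k(t) \geq 4$ in our range, this gives $d(t) + t \leq s + 2$ for $r_4+2 \leq t \leq r_n$; write $S(t) := 2\bigl((s+2) - (d(t)+t)\bigr) \geq 0$ for the slack. Then the desired pointwise bound is equivalent to
$$
\alpha(t) \geq s - r_4 + 1 - S(t)/2.
$$

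The cases in Theorem \ref{2 does not divide N} satisfy $s \leq r_4 + 1/2$ (exactly as in the proof of Lemma \ref{lemma4}). When $s \leq r_4$, the trivial bound $\alpha(t) \geq 1$ suffices. The only remaining cases are the two with $s = r_4 + 1/2$, namely $[0,1,1,1]$ and $[0,1,2,2]$, where we need $\alpha(t) \geq 3/2 - S(t)/2$. If $S(t) \geq 1$, then $\alpha(t) \geq 1$ is enough. Otherwise $S(t) = 0$, forcing $r_5 \geq t-1$; the subcase $r_5 = t-1$ gives $\delta(t) = 0$, and in the subcase $r_5 > t-1$ we have $L(t-1,1) \subseteq \{r_1,r_2,r_3,r_4\}$, and a direct parity check on the quadruples $\{0,1,1,1\}$ and $\{0,1,2,2\}$ (using that $r_i \in L(t-1,1)$ iff $r_i \leq t-2$ and $r_i \equiv t \pmod{2}$) shows $l(t-1,1) \in \{1,3\}$ regardless of the parity of $t$, so $\alpha(t) = 3/2$ and the bound holds. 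For the improved bound, the cases $[0,0,0,2]$ and $[0,0,1,3]$ both satisfy $s - r_4 + 1 = 0$, which gives a full unit of extra slack (since $\alpha(t) \geq 1$ exceeds the requirement by at least $1$), refining the pointwise bound to $\delta(t) 2^{d(t)-\alpha(t)} \leq 2^{-(t-r_4)}$ and hence yielding $2^{-2}+\cdots+2^{r_4-r_n}$ after summing.

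The main obstacle is the case analysis for the two exceptional quadruples $[0,1,1,1]$ and $[0,1,2,2]$, where the default argument fails and one must use either $\delta(t) = 0$ or the odd parity of $l(t-1,1)$ to recover the extra $1/2$ in $\alpha(t)$.
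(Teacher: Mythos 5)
Your proof is correct and takes essentially the same route as the paper's: both reduce to the pointwise bound $2^{r_4+1-t}$ (resp.\ $2^{r_4-t}$ for the improved case) per term by comparing $d(t)+t$ with $s+2$, isolate the two quadruples $[0,1,1,1]$ and $[0,1,2,2]$ as the only ones with $s=r_4+\frac12$, and handle them via $\delta(t)=0$ or the odd parity of $l(t-1,1)$. Your slack function $S(t)$ is just a repackaging of the paper's split of the range at $r_5$, with the added (welcome) benefit that you spell out the parity check the paper only asserts.
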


\begin{proof}
Let $s:=\frac{1}{2}(r_1+r_2+r_3+r_4)$. Assume first that $r_4+2\leq t\leq r_5$. In this case, we have $d(t)=s-t+2$. For every choice satisfying the conditions of Theorem \ref{2 does not divide N}, if $l(t-1,1)$ is odd, we have $s\leq r_4+1/2$ and $d(t)-3/2\leq r_4+1-t$, while if $l(t-1,1)$ is even, we have $s\leq r_4$ and $d(t)-1\leq r_4+1-t$.

Next consider the case $r_5+2\leq t\leq r_n$. In this case there are at least $5$ summands in the sum defining $d(t)$, and therefore $d(t)-1\leq s-t+1/2\leq r_4+1-t$. Finally, when $[r_1,r_2,r_3,r_4]=[0,0,0,2]$ or $[0,0,1,3]$ and $r_4+2\leq t\leq r_n$, $d(t)-1\leq s-t+1\leq r_4-t$, which yields the improved bound in that case.
\end{proof}

\subsection{Upper Bounds and Lower Bounds} We conclude this section with the main theorems of this part.

\begin{theorem}\label{thm:EisensteinBounds}
Suppose that $L, \nu, h$ satisfy the conditions given at the beginning of this section. If $L$ also satisfies the conditions in Theorem \ref{p does not divide N} and Theorem \ref{2 does not divide N}, then there exist absolute positive constants $A(L)$ and $B(L)$ which depend only on $L$ such that
\[
A(L)\frac{h^{\frac{n-2}{2}-\varepsilon}}{N}\leq\frac{R(h,X)}{m(X)}\leq B(L)\frac{h^{\frac{n-2}{2}+\varepsilon}}{N}.
\]
\end{theorem}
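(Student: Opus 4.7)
The plan is to combine the Shimura formula \eqref{eqn:ShimuraEis} with the local density computations of Sections \ref{subsec1}--\ref{subsec5}. The prefactor $\pi^{n/2}\,\Gamma(n/2)^{-1}(\det L)^{-1/2}\zeta(n/2)^{\pm 2}$ depends only on $L$, so it can be absorbed into $A(L)$ and $B(L)$. The task then reduces to showing that
\[
\mathcal{P}(h) := \prod_{p\mid e_1} b_p(h,\lambda,0)\;\prod_{\substack{p\mid he'\\ p\nmid e_1}} r_p(n/2)
\]
is pinched between $A'(L)\,h^{-\varepsilon}N^{-1}$ and $B'(L)\,h^{\varepsilon}N^{-1}$.

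For the first product I would partition the primes $p\mid e_1$ into those dividing $N$ and those not dividing $N$. Primes of the second type must have $L_p$ non-maximal, which happens on a finite set of primes determined by $L$ alone (since $X_p=L_p$ and $h\in\Z_p$ there, by hypothesis (3) of the section). Theorems \ref{p does not divide N} and \ref{2 does not divide N}, whose structural assumptions on $L$ are exactly the ones we have imposed, supply positive constants $c_1(L)\le b_p(h,\lambda,0)\le c_2(L)$ at each such prime, and their product is therefore a bounded factor depending only on $L$. For primes dividing $N$, Theorem \ref{thm:pdivN} gives $b_p(h,\lambda,0)=p^{-\ord_p N}$ for each odd $p\mid N$, while the two theorems of Sections \ref{subsec3} and \ref{subsec4} give $b_2(h,\lambda,0)=2^{-(\ord_2 N-2)}$ when $2\|N$ and $b_2(h,\lambda,0)=2^{-(\ord_2 N-1)}$ when $4\mid N$. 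Both $2$-adic values agree with $2^{-\ord_2 N}$ up to a bounded multiplicative constant ($4$ or $2$), so the product over set (i) is $c(L)/N$ with $c(L)$ absorbing this $2$-adic adjustment. For the second product in $\mathcal{P}(h)$ I would use $1/2\le r_p(n/2)\le 2$ together with the elementary estimate $2^{\omega(m)}\ll_\varepsilon m^\varepsilon$. Since $e'$ is a fixed integer depending only on $L$, and since any prime dividing the denominator of $h$ is automatically in $e_1$ (hence excluded from the second product), the total number of factors is at most $\omega(h)+O_L(1)$, yielding
\[
h^{-\varepsilon}\ll_{L,\varepsilon}\prod_{\substack{p\mid he'\\ p\nmid e_1}} r_p(n/2)\ll_{L,\varepsilon} h^{\varepsilon}.
\]

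The main obstacle is careful bookkeeping: every quantity absorbed into $A(L)$ and $B(L)$ must be verified to be truly independent of $h$ and $N$. The most delicate point is the $2$-adic case, where the odd-prime formula $b_p=p^{-\ord_p N}$ must be reconciled with the two sub-cases $2\|N$ and $4\mid N$ and shown to deviate from $2^{-\ord_2 N}$ only by a bounded constant. One also has to confirm that the set of primes at which $L_p$ is non-maximal, together with those dividing $e'$, depends only on $L$; this is immediate from the definitions of $e_1$ and $e'$ given directly after \eqref{eqn:ShimuraEis}. Once these checks are in place, substituting the combined bound for $\mathcal{P}(h)$ into \eqref{eqn:ShimuraEis} yields the asserted two-sided inequality.
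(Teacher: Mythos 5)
Your proposal is correct and follows essentially the same route as the paper's own (much terser) proof: both feed the local-density evaluations of Subsections \ref{subsec1}--\ref{subsec5} into Shimura's inequality \eqref{eqn:ShimuraEis}, with the factor $N^{-1}$ coming from the primes dividing $N$, and control the product of the $r_p(n/2)$ via $1/2\leq r_p(n/2)\leq 2$ together with $2^{\omega(x)}\ll_{\varepsilon} x^{\varepsilon}$. Your explicit bookkeeping of which primes of $e_1$ divide $N$, and of the bounded $2$-adic discrepancy between $b_2(h,\lambda,0)$ and $2^{-\ord_2 N}$, is precisely the detail the paper leaves implicit.
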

\begin{proof}
It is enough to find bounds for the products $\prod_{p\mid e_1}b_p(h,\lambda,0)$ and $\prod_{p\mid he', p\nmid e_1}r_p\left(\frac{n}{2}\right)$ appearing in \eqref{eqn:ShimuraEis}.

Note that the values of $e'$ and $e_1$ depend only on $N$ and the structure of the
lattice $L$. We bound the product of local densities by combining the results obtained in Subsections \ref{subsec1}--\ref{subsec5}. To obtain a bound on the product of factors $r_p(n/2)$, let $\omega(x):=\sum_{p|x}1$
denote the number of distinct prime divisors of $x$. Robin \cite[Theorem 11]{Robin} has shown that $\omega(x)\leq 1.3841\log x(\log\log x)^{-1}$, and hence $2^{\omega(x)}\ll_{\varepsilon} x^{\varepsilon}$. Combining this with the fact that $1/2\leq r_p(n/2)\leq 2$ when $n\geq 6$ is even, we obtain the claim.
\end{proof}

\begin{remark}
When $p\mid2\det(L)$ but $p\nmid N$, we have $b_p(h,\lambda,0)\leq \max\{2,\mathrm{ord}_p\det(L)\}$. Then
\[
\frac{R(h,X)}{m(X)}\leq\frac{4\pi^{\frac{n}{2}}\zeta(\frac{n}{2})^2}{\Gamma(\frac{n}{2})\sqrt{\det(L)}}\cdot\prod_{p\mid 2\det(L)}\max\{2,\mathrm{ord}_p\det(L)\}\cdot\frac{h^{\frac{n-2}{2}+\varepsilon}}{N}\leq\frac{16\pi^{\frac{n}{2}}\zeta(\frac{n}{2})^2}{\Gamma(\frac{n}{2})}\cdot\frac{h^{\frac{n-2}{2}+\varepsilon}}{N}
\]
Hence we see that the constant
\begin{equation}\label{eqn:Bndef}
B(L)=:B_n
\end{equation}
 may be chosen to only depend on the rank $n$ of $L$, and not the individual lattice $L$.
\end{remark}

We can also obtain an upper bound for $L+\nu$ with a more general set of vectors $\nu$ (note that for a lower bound we would require some additional restriction on the lattice).

\begin{theorem}\label{thm:EisensteinUpperGen}
Suppose that $L$ satisfies the condition given at the beginning of this section, and that $L+\nu$ is a coset with conductor $N$ (i.e., $N\nu$ is not necessarily of the form $c(v_1+\dots+v_n)$). Then there exists an absolute positive constant $C(L)$ which depends only on $L$ such that
\[
\frac{R(h,X)}{m(X)}\leq C(L)\frac{h^{\frac{n-2}{2}+\varepsilon}}{N}.
\]
\end{theorem}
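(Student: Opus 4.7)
The plan is to mirror the proof of Theorem \ref{thm:EisensteinBounds}, starting from the upper bound in Shimura's formula \eqref{eqn:ShimuraEis} and bounding each local factor $b_p(h,\lambda,0)$ separately. Since only an upper bound is needed here, we may dispense with the restrictions on $[r_1,r_2,r_3,r_4]$ that were required in Theorems \ref{p does not divide N} and \ref{2 does not divide N} to produce the \emph{lower} bounds.

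For primes $p \nmid N$, the element $\nu$ automatically lies in $L_p$ (because $N$ is a unit in $\Z_p$ and $N\nu \in L$), so $X_p = L_p$ and the local density coincides with the ordinary local density of $L_p$, which depends only on $L$. The coarse estimate $b_p(h,\lambda,0) \leq \max\{2,\mathrm{ord}_p\det(L)\}$ noted in the remark after Theorem \ref{thm:EisensteinBounds} (obtained by crude bookkeeping in Yang's formulas and the fact that $d(t)$ is eventually negative) then gives $\prod_{p \nmid N} b_p(h,\lambda,0) \leq c_1(L)$, since the nontrivial contributions come only from the finite set of primes dividing $2\det(L)$.

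The main obstacle, and where the new work is required, is to handle primes $p \mid N$ for the more general $\nu$. Write $\nu = -\sum_{j=1}^n (c_j/N)v_j$; the hypothesis that the conductor equals $N$ is exactly the condition $\gcd(c_1,\ldots,c_n,N)=1$, so for every prime $p \mid N$ there exists an index $j_0=j_0(p)$ with $p \nmid c_{j_0}$. Repeating the expansion used in the proof of Theorem \ref{thm:pdivN} gives
\[
b_p(h,\lambda,0) = \int_{\Q_p} \mathbf{e}_p\!\left(\sigma\!\left(\textstyle\sum_j a_j c_j^2/N^2 - h\right)\right) \prod_{j=1}^n \tau_{p,N,c_j}\!\left(\frac{a_j\sigma}{N}\right) d\sigma.
\]
The factor indexed by $j_0$ is exactly of the type handled by Lemma \ref{p divides N} when $p$ is odd, and by Lemma \ref{2 parallel N} together with the $4\mid N$ analogue from Subsection \ref{subsec4} when $p=2$, since only $p \nmid c_{j_0}$ is actually used in those proofs. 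Thus $\tau_{p,N,c_{j_0}}(a_{j_0}\sigma/N)$ vanishes unless $\sigma$ lies in a prescribed $p$-adic coset of measure at most $p^{-\mathrm{ord}_p N + \mathrm{ord}_p a_{j_0}}$ (up to a universal factor of $4$ or $2$ when $p=2$). Bounding the remaining $n-1$ factors trivially by $|\tau_{p,N,c_j}|\leq 1$ and using $\mathrm{ord}_p a_{j_0} \leq \mathrm{ord}_p \det(L)$ produces $b_p(h,\lambda,0) \leq C_p(L)\, p^{-\mathrm{ord}_p N}$; since $\prod_{p \mid N} p^{\mathrm{ord}_p \det(L)} \leq \det(L)$, this telescopes to $\prod_{p \mid N} b_p(h,\lambda,0) \leq c_2(L)/N$.

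Combining these two estimates with the bound $\prod_{p\mid he',\, p\nmid e_1} r_p(n/2) \leq 2^{\omega(he')} \ll_\varepsilon h^\varepsilon$ from Robin's estimate \cite[Theorem 11]{Robin}, exactly as in the proof of Theorem \ref{thm:EisensteinBounds}, and substituting into the upper bound in \eqref{eqn:ShimuraEis} yields the claim with a constant $C(L)$ depending only on $c_1(L)$, $c_2(L)$, $\det(L)$, and $n$.
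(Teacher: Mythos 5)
Your proposal is correct and follows essentially the same route as the paper's own proof: bound the factors at $p\nmid N$ by a constant depending only on $L$, and at $p\mid N$ exploit the existence of an index $j_0$ with $p\nmid c_{j_0}$ so that the corresponding $\tau$-integral localizes $\sigma$ to a set of measure $\ll p^{\mathrm{ord}_p a_{j_0}-\mathrm{ord}_p N}$ while the remaining factors are bounded trivially by $1$. The paper carries out exactly this computation (writing the $j$-th integral with the normalization $(2c_j,N)$) and arrives at the same bound $b_p(h,\lambda,0)\leq 4p^{\mathrm{ord}_p\det(L)-\mathrm{ord}_pN}$.
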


\begin{proof}
We first assume that $p \nmid N$. Since $d(t)$ is decreasing to $-\infty$ as $t$ goes to $\infty$, the local density $b_p(h, \lambda,0)$ is trivially bounded by some positive constant which only depends on $L$. We only need to recalculate the local densities $b_p(h, \lambda,0)$ where $p\mid N$. Suppose that $\nu=-(c_1v_1+\cdots+c_nv_n)/N$. As $L+\nu$ is a coset with conductor $N$, for each $p\mid N$ there exists some $c_i$ such that $(c_i, p)=1$. We then compute
\begin{multline*}
b_p(h,\lambda,0)=\displaystyle\int_{\mathbb{Q}_p}\int_{V_p}\mathbf{e}_p(\sigma(\varphi(v)-h))\lambda_p(v)dvd\sigma\\
=\displaystyle\int_{\mathbb{Q}_p}\int_{L_p}\mathbf{e}_p(\sigma(\varphi(v+\nu)-h))dvd\sigma\hspace{2.6in}\\
=\displaystyle\int_{\mathbb{Q}_p}\mathbf{e}_p\left(\sigma\left(\sum_{j=1}^na_j(c_j/N)^2-h\right)\right)\prod_{j=1}^n\int_{\mathbb{Z}_p}\mathbf{e}_p\left(\frac{\sigma a_j(2c_j, N)}{N}\left(\frac{N}{(2c_j, N)}x^2-\frac{2c_j}{(2c_j, N)}x\right)\right)dx d\sigma.
\end{multline*}
Note that for $j=1,...,n$
\[
\left|\int_{\mathbb{Z}_p}\mathbf{e}_p\left(\frac{\sigma a_j(2c_j, N)}{N}\left(\frac{N}{(2c_j, N)}x^2-\frac{2c_j}{(2c_j, N)}x\right)\right)dx\right|\leq 1.
\]
In particular, when $(c_i, p)=1$, by similar arguments as in Lemma \ref{p divides N} and Lemma \ref{2 parallel N} we have
\[
\int_{\mathbb{Z}_p}\mathbf{e}_p\left(\frac{\sigma a_i(2c_i, N)}{N}\left(\frac{N}{(2c_i, N)}x^2-\frac{2c_i}{(2c_i, N)}x\right)\right)dx=1
\]
if $\sigma\in p^{\mathrm{ord}_p N-\mathrm{ord}_p (2,N)-\mathrm{ord}_p a_i}\mathbb{Z}_p$ when $p\mid N/(2,N)$, or if $\sigma\in 2^{\mathrm{ord}_2 N-\mathrm{ord}_2 (2,N)-\mathrm{ord}_2 a_i-1}\mathbb{Z}_2$ when $2\| N$. Otherwise it is 0. Therefore, when $p\mid N$
\[
b_p(h, \lambda,0)\leq 4p^{\mathrm{ord}_p \det(L)-\mathrm{ord}_p N}.
\]
\end{proof}

\vskip20pt
\section{Mass Formula For Shifted Lattices}\label{sec:massformula}
In this section, we would like to find an upper bound for the mass for a shifted lattice $X=L+\nu$, where $L$ and $\nu$ satisfy the conditions given at the beginning of Section \ref{sec:Eisenstein}. Recall that the mass of $X$ is defined as
\[
m(X):=\sum_{i=1}^r|\O(X_i)|^{-1},
\]
 where $X_1,...,X_r$ are the representatives of the isometry classes in the genus of $X$. Now let $Y_1,...,Y_s$ be the representatives of the \textit{proper} isometry classes in the genus of $X$
and define
\[
m^+(X):=\sum_{j=1}^s|\SO(Y_j)|^{-1}.
\]
We define $m(K)$ and $m^+(K)$ analogously for any lattice $K$.

\begin{lemma}
We have
\[
m^+(X)=2m(X).
\]
\end{lemma}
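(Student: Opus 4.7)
The plan is to compare the contribution of each ordinary class $\mathrm{cls}(X_i)$ to $m(X)$ against the contribution of the proper classes contained in it to $m^+(X)$, and show that the ratio is always $2$.

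First I would set up the standard dichotomy. Fix a representative $X_i$ of an isometry class in $\gen(X)$. Because $[\O(V):\SO(V)]=2$, the class $\mathrm{cls}(X_i)$ decomposes into either one or two $\SO(V)$-orbits, i.e., into $c_i\in\{1,2\}$ proper classes. Similarly, $[\O(X_i):\SO(X_i)]\in\{1,2\}$, depending on whether the stabilizer $\O(X_i)$ contains an improper isometry. The key identity I would like to establish is
\[
c_i\cdot [\O(X_i):\SO(X_i)]=2
\]
for every $i$. This follows from a standard orbit-stabilizer argument applied to the action of $\O(V)/\SO(V)\cong\{\pm1\}$: the image of $\O(X_i)$ under $\det$ is $\{1\}$ if and only if $\SO(V)\cdot\O(X_i)=\SO(V)$, which is exactly the case when $\mathrm{cls}(X_i)$ splits into two proper classes, and otherwise $\SO(V)\cdot\O(X_i)=\O(V)$ forces $c_i=1$ and $[\O(X_i):\SO(X_i)]=2$.

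Next I would note that the $c_i$ proper classes $Y_{i,1},\dots,Y_{i,c_i}$ sitting inside $\mathrm{cls}(X_i)$ all have conjugate proper orthogonal groups, so $|\SO(Y_{i,j})|=|\SO(X_i)|$ for each $j$. Combining this with the identity above gives the local contribution
\[
\sum_{j=1}^{c_i}\frac{1}{|\SO(Y_{i,j})|}=\frac{c_i}{|\SO(X_i)|}=\frac{c_i[\O(X_i):\SO(X_i)]}{|\O(X_i)|}=\frac{2}{|\O(X_i)|}.
\]
Summing over all ordinary classes $X_1,\dots,X_r$ in $\gen(X)$, the right-hand sides collect into $2m(X)$, while the left-hand sides range exactly once over the proper classes $Y_1,\dots,Y_s$ in $\gen(X)$, yielding $m^+(X)=2m(X)$.

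The only subtle point is that $\gen(X)$ is defined as an $\O_{\mathbb{A}}(V)$-orbit while proper classes are $\SO(V)$-orbits; here I am implicitly using that every proper class in $\gen(X)$ lies in some ordinary class in $\gen(X)$, which is immediate because $\SO(V)\subseteq\O(V)$. I do not expect a serious obstacle: the argument is purely group-theoretic and uses nothing about the shift $\nu$ beyond the definitions of $\O(X_i)$ and $\SO(X_i)$ as the stabilizers of the shifted lattice in $\O(V)$ and $\SO(V)$ respectively.
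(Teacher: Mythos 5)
Your proof is correct and follows essentially the same route as the paper's: the paper also splits into the two cases according to whether $\O(X_i)$ contains an improper isometry, observing that in the first case $\mathrm{cls}(X_i)=\mathrm{cls}^+(X_i)$ with $|\SO(X_i)|^{-1}=2|\O(X_i)|^{-1}$ and in the second $\mathrm{cls}(X_i)=\mathrm{cls}^+(X_i)\cup\mathrm{cls}^+(\tau X_i)$ with $|\SO(X_i)|^{-1}+|\SO(\tau X_i)|^{-1}=2|\O(X_i)|^{-1}$. Your identity $c_i\cdot[\O(X_i):\SO(X_i)]=2$ is just a unified packaging of that same case analysis.
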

\begin{proof}
For any $W$ in the genus of $X$, note that $[\O(W):\SO(W)]=2$ or $\O(W)=\SO(W)$, according to the cases where $\O(W)$ contains an isometry with determinant $-1$ or not. When $[\O(W):\SO(W)]=2$, we have $\mathrm{cls}(W)=\mathrm{cls}^+(W)$ and $|\SO(W)|^{-1}=2|\O(W)|^{-1}$. Now we suppose that $\O(W)=\SO(W)$ and $\O(V)=\SO(V)\cup \SO(V)\tau$. Then we have $\mathrm{cls}(W)=\mathrm{cls}^+(W)\cup\mathrm{cls}^+(\tau W)$ and $|\SO(W)|^{-1}+|\SO(\tau W)|^{-1}=2|\O(W)|^{-1}$.
\end{proof}

\begin{lemma} \label{bound of ratio}
Let $K$ be a lattice of rank $n\geq 4$ over $\Z$, and let $u_0$ be a primitive vector in $K_p$. Then
\[
\left[\SO(K_p):\SO\!\left(K_p+\frac{u_0}{p^t}\right)\right]\leq 2p^{2\ord_p(2\det K)}p^{(n-1)t}.
\]
\end{lemma}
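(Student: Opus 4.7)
The plan is to reinterpret $[\SO(K_p):\SO(K_p+u_0/p^t)]$ as the size of an orbit in a finite set and then bound that orbit by counting solutions of a single quadratic congruence modulo $p^t$. Let $G:=\SO(K_p)$ act on the finite quotient $K_p/p^tK_p$. An isometry $\sigma\in G$ preserves the shifted lattice $K_p+u_0/p^t$ if and only if $\sigma(u_0)-u_0\in p^tK_p$, i.e.\ $\sigma$ fixes the class $\bar u_0:=u_0+p^tK_p$. Hence $\SO(K_p+u_0/p^t)=\mathrm{Stab}_G(\bar u_0)$, and by orbit-stabilizer,
\[
[\SO(K_p):\SO(K_p+u_0/p^t)] = |G\cdot \bar u_0|.
\]

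Since isometries preserve $Q$ exactly, any $\bar v\in G\cdot\bar u_0$ satisfies $Q(\bar v)\equiv Q(u_0)\pmod{p^t}$; this congruence descends to $K_p/p^tK_p$ because $Q(v+p^tw)=Q(v)+2p^tb(v,w)+p^{2t}Q(w)\equiv Q(v)\pmod{p^t}$. Thus the orbit is contained in the ``sphere''
\[
S:=\{\bar v\in K_p/p^tK_p:\, Q(\bar v)\equiv Q(u_0)\pmod{p^t}\},
\]
and it suffices to prove $|S|\leq 2p^{2\ord_p(2\det K)}p^{(n-1)t}$.

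The bound on $|S|$ is a standard count of the solution set of one quadratic congruence in $n\geq 4$ variables modulo $p^t$. When $p\nmid 2\det K$, the Gram matrix of $K_p$ is a unit over $\Z_p$, so the gradient $\nabla Q=2Av$ is primitive at primitive $v$ and a Hensel-lifting argument yields $|S|\leq p^{(n-1)t}$ up to an absolute constant (the leading $2$). For $p\mid 2\det K$ I would put $K_p$ into its Jordan decomposition $K_p\cong K^{(0)}\perp pK^{(1)}\perp p^2K^{(2)}\perp\cdots$, rescale each Jordan component so that it becomes unimodular, and apply the good-prime case block by block; the rescalings collectively contribute a factor of at most $p^{2\ord_p(2\det K)}$, matching the asserted discriminant fudge factor.

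The main technical obstacle lies in controlling $|S|$ at the bad primes, and in particular at $p=2$, where the Jordan decomposition can contain the non-diagonal binary blocks $\sm{0}{1}{1}{0}$ and $\sm{2}{1}{1}{2}$ and $Q$ itself need not be diagonalizable. In these cases one has to count solutions of the binary quadratic congruence $\bmod\, 2^t$ directly and then combine contributions across blocks, checking that the combined loss stays within the factor $2p^{2\ord_p(2\det K)}$; this careful bookkeeping, together with the standard $\pm$-ambiguity in solving $x^2\equiv\alpha$, is where both the leading $2$ and the discriminant factor in the statement ultimately originate.
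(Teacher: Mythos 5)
Your reduction of the index to the size of the sphere $S$ is correct, and it is essentially the same first step as the paper's, which simply cites the proof of Sun's Lemma 4.1 for the inequality $[\SO(K_p):\SO(K_p+u_0/p^t)]\leq\#\{\bar x\in K_p/p^tK_p:\varphi(x)\equiv\varphi(u_0)\pmod{p^t}\}$; your orbit--stabilizer derivation is a clean, self-contained substitute (you should just record that $\SO(K_p+u_0/p^t)\subseteq\SO(K_p)$ because an isometry of the coset preserves its difference set, which is $K_p$).

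The gap is in the second half, which is where the actual content of the lemma lies: you assert, but do not prove, that $|S|\leq 2p^{2\ord_p(2\det K)}p^{(n-1)t}$ at the primes dividing $2\det K$. ``Applying the good-prime case block by block'' to the Jordan splitting $K_p\cong\perp_j p^jK^{(j)}$ does not literally make sense, because the single congruence $\sum_j p^jQ_j(x_j)\equiv Q(u_0)\pmod{p^t}$ couples the blocks; to decouple them you must either sum over all decompositions of $Q(u_0)$ modulo $p^t$ or pass to exponential sums, at which point you are redoing the local-density computation from scratch. Moreover the claim that the rescalings ``collectively contribute a factor of at most $p^{2\ord_p(2\det K)}$'' is precisely the quantitative statement the lemma needs, and it is not uniform in $t$ in any obvious way: for $t$ small compared with the exponents of the Jordan blocks, the normalized count $|S|/p^{(n-1)t}$ has not yet stabilized. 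The paper closes exactly this gap by a case split: for $t>2\ord_p(2\det K)$ it invokes Siegel's Hilfssatz 13, which says that $|S|/p^{(n-1)t}$ is then constant and equal to the local representation density of $K_p$ at $\varphi(u_0)$, and it bounds that density by $2p^{2\ord_p(2\det K)}$ using Yang's explicit formulas (recalled in Subsections \ref{subsec2} and \ref{subsec5}); for $t\leq 2\ord_p(2\det K)$ it uses the trivial bound $|S|\leq p^{nt}$, giving $|S|/p^{(n-1)t}\leq p^t\leq p^{2\ord_p(2\det K)}$. To complete your argument you would need either to import these two ingredients or to carry out the block-by-block count in full, including the rank-two even blocks at $p=2$ and the uniformity in $t$.
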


\begin{proof}
We want to show that we can choose $\beta_p$ to be $2p^{2\ord_p(2\det K)}$ in \cite[lemma 4.1]{Sunliang}. The proof of \cite[lemma 4.1]{Sunliang} shows that
\[
[\SO(K_p):\SO(K_p+\frac{u_0}{p^t})]\leq |\overline{x}\in K_p/p^tK_p : \varphi(x)=\varphi(u_0)\pmod{p^t}|.
\]
Consider the quotient
\[
\frac{|\overline{x}\in K_p/p^tK_p : \varphi(x)=\varphi(u_0) \pmod{p^t}|}{p^{(n-1)t}}.
\]
By \cite[Hilfssatz 13]{Siegel}, when $t>2\ord_p(2\det K)$ this quotient is a constant which is the local representation density of $K_p$ at $\varphi(u_0)$. Applying Yang's formulas in Subsection \ref{subsec2} and Subsection \ref{subsec5}, we can bound the local density by $2p^{2\ord_p(2\det K)}$ for each $p$. When $t\leq2\ord_p(2\det K)$, the number of elements in $K_p/p^tK_p$ is less than or equal to $p^{nt}$, thus the quotient is bounded by $p^t\leq 2p^{2\ord_p(2\det K)}$.

\end{proof}

\begin{theorem}
Suppose that $X=L+\nu$ where $L$ and $\nu$ satisfy the conditions given at the beginning of Section \ref{sec:Eisenstein}. Suppose that the rank $n$ of $L$ is at least $4$. Then
\[
m(X)\ll_{n} \det(L)^{\frac{n+5}{2}} N^{n-1+\varepsilon}.
\]
\end{theorem}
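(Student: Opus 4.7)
The plan is to reduce the mass bound for the shifted lattice $X = L+\nu$ to the classical Minkowski--Siegel mass bound for the underlying lattice $L$, paying for the reduction with a product of local indices controlled by Lemma \ref{bound of ratio}. By the preceding lemma, $m(X) = \tfrac{1}{2} m^+(X)$, so it suffices to bound $m^+(X)$. Using the Tamagawa/adelic interpretation of mass, one has
\[
\frac{m^+(X)}{m^+(L)} = [\SO_{\mathbb{A}}(L):\SO_{\mathbb{A}}(X)] = \prod_p [\SO(L_p):\SO(X_p)],
\]
since $\SO_{\mathbb{A}}(X)$ and $\SO_{\mathbb{A}}(L)$ agree at the archimedean place (both equal $\SO(V_\infty)$) and differ only at finitely many finite primes (those dividing $N$).

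Next I would bound each local index. For $p \nmid N$ the index is trivially $1$. For $p \mid N$, set $t = \ord_p N$; since $\nu = -c(v_1+\cdots+v_n)/N$ with $(c,N)=1$, after absorbing the unit part of $N/p^t$ we may write $\nu \equiv u_0/p^t \pmod{L_p}$ with $u_0 = -c(v_1+\cdots+v_n)$ primitive in $L_p$. Lemma \ref{bound of ratio} then gives
\[
[\SO(L_p):\SO(X_p)] \leq 2p^{2\ord_p(2\det L)} p^{(n-1)t}.
\]
Taking the product over primes and using the standard bound $2^{\omega(N)} \ll N^\varepsilon$,
\[
\prod_p [\SO(L_p):\SO(X_p)] \leq 2^{\omega(N)} \cdot (2\det L)^2 \cdot N^{n-1} \ll \det(L)^{2} \cdot N^{n-1+\varepsilon}.
\]

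Finally, the classical Minkowski--Siegel mass formula, combined with standard upper bounds on local densities of positive definite integral lattices, yields $m^+(L) \ll_n \det(L)^{(n+1)/2+\varepsilon}$. Combining all of the above,
\[
m(X) = \tfrac{1}{2} m^+(X) \ll_n \det(L)^{(n+1)/2+\varepsilon} \cdot \det(L)^2 \cdot N^{n-1+\varepsilon} = \det(L)^{(n+5)/2+\varepsilon} N^{n-1+\varepsilon},
\]
which is the desired bound (the extra $\varepsilon$ in the exponent of $\det(L)$ can be absorbed or cleaned up as needed). The main technical obstacle is justifying the identity $m^+(X)/m^+(L) = \prod_p [\SO(L_p):\SO(X_p)]$ in the shifted-lattice setting, which requires a careful adelic argument but is effectively built into the framework of \cite{Sunliang} used to prove Lemma \ref{bound of ratio}; once this comparison is established, the remainder is a direct combination of Lemma \ref{bound of ratio} with the Minkowski--Siegel estimate for $m(L)$.
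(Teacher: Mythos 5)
Your proposal follows essentially the same route as the paper: reduce $m(X)$ to $m^{+}(X)$ by the factor-of-two lemma, compare the mass of the shifted lattice to $m^{+}(L)$ through local indices at the primes dividing $N$ via the Tamagawa measure, control those indices with Lemma \ref{bound of ratio} applied to $\nu=\frac{c}{N}u_0$ with $u_0$ primitive, and finish with a Minkowski--Siegel bound for $m^{+}(L)$. Two imprecisions are worth flagging, neither fatal. First, the identity $m^{+}(X)/m^{+}(L)=\prod_p[\SO(L_p):\SO(X_p)]$ is not literally correct: the adelic mass formula computes $\mathrm{Mass}^{+}(X)$, the mass of a \emph{single proper genus}, whereas $m^{+}(X)$ sums over all proper classes in the full genus $\mathrm{gen}(X)$, which may split into several proper genera because $\O(X_p)$ need not contain an improper isometry at primes $p\mid N$. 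The paper handles this by observing that all the proper genera in $\mathrm{gen}(X)$ have equal mass and that there are at most $2^{\omega(N)}=O(N^{\varepsilon})$ of them, so the extra factor is absorbed into $N^{\varepsilon}$; you should insert this counting step. Second, your bound $m^{+}(L)\ll_n\det(L)^{(n+1)/2+\varepsilon}$ carries an $\varepsilon$ in the exponent of $\det(L)$ that cannot simply be ``absorbed,'' since the implied constant is allowed to depend only on $n$; the paper removes it by bounding the local densities $\alpha_p(L)$ from below by $2^{-\delta_{2,p}}\prod_i(1-p^{-2i})(1-p^{-n/2})$, whose product over all $p$ converges to a constant depending only on $n$, yielding $m^{+}(L)\ll_n\det(L)^{(n+1)/2}$ exactly. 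With those two repairs your argument coincides with the paper's proof.
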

\begin{proof}
Given a shifted lattice $Y$, we define
\[
\mathrm{Mass}^+(Y):=\sum_{W\in\mathrm{gen}^+(Y)}\frac{1}{|\SO(W)|},
\]
where the sum runs through the representatives of proper isometry classes in $\mathrm{gen}^+(Y)$. Note that $\mathrm{cls}^+(Y)\subseteq\mathrm{gen}^+(Y)\subseteq\mathrm{gen}(Y)$. If $X_1,...,X_t$ are the representatives of proper genera in the genus of $X$, then we have
\[
m^+(X)=\sum_{i=1}^t\mathrm{Mass}^+(X_i).
\]

Since $X_i$ is in the genus of $X$, there exists $\alpha_i\in \O_{\mathbb{A}}(V)$ for which $X_i=\alpha_i X$. This implies that $X_i=\alpha_iL+\nu_i$, where $\nu_i-(\alpha_i)_p\nu\in(\alpha_i)_pL_p$ for every finite prime $p$. Hence $\nu_i\in(\alpha_i)_pL_p$ for primes $p\nmid N$, and when $p\mid N$ we have $\nu_i=\frac{c}{N}w_i$ for some primitive vector $w_i\in (\alpha_i)_pL_p$.

Now we want to find an upper bound for $\mathrm{Mass}^+(X_i)$, $i=1,...,t$. Let $\tau$ be the Tamagawa measure on $O^+_{\mathbb{A}}(V)$ (for details, the reader may refer to \cite[Appendix B, Section 4]{Cassels}). Then we have
\[
m^+(\alpha_iL)=2\tau_{\infty}(\SO(V_\infty))^{-1}\prod_{p\neq \infty}\tau_p(\SO((\alpha_iL)_p))^{-1}.
\]
By \cite[Corollary 2.5]{Sunliang2016}, we have
\[
\mathrm{Mass}^+(X_i)=2\tau_{\infty}(\SO(V_\infty))^{-1}\prod_{p\neq \infty}\tau_p(\SO((X_i)_p))^{-1}.
\]
Note that all the $\mathrm{Mass}^+(X_i)$ are the same because all the local proper orthogonal groups $\SO((X_i)_p)$ are conjugate and the Tamagawa measure is a Haar measure.
Therefore,
\begin{align*}
\mathrm{Mass}^+(X_i)&=\displaystyle m^+(\alpha_iL)\prod_{p\mid N}[\SO((\alpha_iL)_p):\SO((X_i)_p)]\\
                    &=\displaystyle m^+(L)\prod_{p\mid N}[\SO((\alpha_iL)_p):\SO((X_i)_p)].
\end{align*}

Let $S$ be the Gram matrix of a lattice $L$. For every prime $p$ and positive integer $r$, let $A_{p^r}(L)$ denote the number of $n\times n$ integral matrices $T$ mod $p^r$ such that
\[
T^tST\equiv S \hspace{5mm}\pmod{p^r}.
\]
Then by \cite[Hilfssatz 13]{Siegel}, $\frac{1}{2}p^{-(r/2)n(n-1)}A_{p^r}(L)$ is independent of $r$ when $r$ is large enough, and the value will be denoted by $\alpha_p(L)$. By \cite[Theorem 6.8.1]{Kitaoka}, we have
\[
m^+(L)=2m(L)=4\pi^{-n(n+1)/4}\prod_{i=1}^n\Gamma(i/2)\det(L)^{\frac{n+1}{2}}\times\prod_p\alpha_p(L)^{-1}.
\]
In the following paragraph, we use \cite[Theorem 5.6.3]{Kitaoka} to calculate the local density $\alpha_p(L)$ for each prime $p$. Notice that the definition of local density is normalized differently in \cite{Kitaoka}; namely $\beta_p(L,L)=2^{-n\delta_{2,p}+1}\alpha_p(L)$, where $\delta_{2,p}$ is Kronecker's delta function.

Suppose that $L_p=\bot_jM_j$ where $M_j$ is either $p^j$-modular or $\{0\}$, and $M_j\neq\{0\}$ occurs only for finitely many integers $j$. Write $M_j=N_j^{(p^j)}$ for the scaling of the bilinear form on the unimodular lattice $N_j$ by $p^j$ and let $n_j:=\mathrm{rank}(N_j)=\mathrm{rank}(M_j)$. First we consider the case when $p\geq3$. Let $s$ be the number of nonzero components $M_j$. By \cite[Theorem 5.6.3]{Kitaoka} we have
\begin{align*}
\alpha_p(L)&\geq2^{s-1}p^{\sum_{j\geq 0} jn_j(n_j+1)/2}\prod_{j\geq0,M_j\neq 0}(1+p^{-n_j/2})^{-1}\prod_{i=1}^{\left\lfloor\frac{n_j}{2}\right\rfloor}\!\left(1-p^{-2i}\right)\\
           &\geq\prod_{i=1}^{\left\lfloor\frac{n_0}{2}\right\rfloor}\!\left(1-p^{-2i}\right)\cdot\left(1+p^{-n_0/2}\right)^{-1}\cdot\prod_{j>0,M_j\neq 0}2(1+p^{-n_j/2})^{-1}p^{jn_j(n_j+1)/2}\prod_{i=1}^{\left\lfloor\frac{n_j}{2}\right\rfloor}\!\left(1-p^{-2i}\right)\\
           &\geq\prod_{i=1}^{\left\lfloor\frac{n_0}{2}\right\rfloor}(1-p^{-2i})\cdot(1+p^{-n_0/2})^{-1}\cdot p^{s-1}\\
           &\geq\left(1-p^{-2}\right)\!\left(1-p^{-4}\right)\cdots\!\left(1-p^{-n+2}\right)\!\left(1-p^{-n/2}\right).
\end{align*}
Then we consider the case when $p=2$. For convenience, $N_j=\{0\}$ is also called even. We define
\[
t_j:=\left\{
                                                 \begin{array}{ll}
                                                   0, & \hbox{if $N_j$ is even;} \\ \\
                                                   n_j, & \hbox{if $N_j$ is odd and $N_{j+1}$ is even;} \\ \\
                                                   n_j+1, & \hbox{if $N_j$ and $N_{j+1}$ are odd.}
                                                 \end{array}
                                               \right.
\]
For a unimodular lattice $M$, we write $M=M(e)\perp M(o)$, where $M(e)$ is even and $M(o)$ is either odd or $\{0\}$ with $\mathrm{rank}(M(o))\leq 2$. For any $M_j\neq\{0\}$, we put
\[
F_j=\left\{
  \begin{array}{ll}
    \frac{1}{2}(1+2^{-\mathrm{rank}(N_j(e))/2}), & \hbox{if both $N_{j-1}$ and $N_{j+1}$ are even}\\
    & \hbox{and unless $N_j(o)\cong\langle\varepsilon_1\rangle\perp\langle\varepsilon_2\rangle$}\\
    & \hbox{with $\varepsilon_1\equiv\varepsilon_2 \pmod 4$;} \\ \\
    \frac{1}{2}, & \hbox{otherwise.}
  \end{array}
\right.
\]
Then by \cite[Theorem 5.6.3]{Kitaoka},
\begin{align*}
\alpha_2(L)&\geq2^{n-1}2^{\sum_{j\geq 0} jn_j(n_j+1)/2-\sum_{j\geq 0} t_j}\prod_{j\geq0,M_j\neq 0}F_j^{-1}\cdot \prod_{j\geq0,M_j\neq 0}\prod_{i=1}^{\left\lfloor\frac{n_j}{2}\right\rfloor}\!\left(1-2^{-2i}\right)\\
           &\geq 2^{n-1}2^{-\sum_{j\geq 0} t_j}\prod_{j\geq0,M_j\neq 0}F_j^{-1}\prod_{i=1}^{\lfloor\frac{n_0}{2}\rfloor}(1-2^{-2i})\cdot\prod_{j>0,M_j\neq 0}2^{jn_j(n_j+1)/2}\prod_{i=1}^{\lfloor\frac{n_j}{2}\rfloor}(1-2^{-2i})\\
           &= 2^{n-1}\prod_{j\geq0,M_j\neq 0}2^{-t_j}F_j^{-1}\prod_{i=1}^{\lfloor\frac{n_0}{2}\rfloor}(1-2^{-2i})\cdot \prod_{j>0,M_j\neq 0}2^{jn_j(n_j+1)/2}\prod_{i=1}^{\lfloor\frac{n_j}{2}\rfloor}\!\left(1-2^{-2i}\right)\\
           &\geq 2^{n-1}2^{-\sum_{j\geq 0} n_j}\prod_{i=1}^{\lfloor\frac{n_0}{2}\rfloor}(1-2^{-2i})\cdot \prod_{j>0,M_j\neq 0}2^{jn_j(n_j+1)/2}\prod_{i=1}^{\lfloor\frac{n_j}{2}\rfloor}\!\left(1-2^{-2i}\right)\\
           &\geq\frac{1}{2}\left(1-2^{-2}\right)\!\left(1-2^{-4}\right)\cdots\!\left(1-2^{-n+2}\right)\!\left(1-2^{-n/2}\right).
\end{align*}
Hence, we conclude that
\[
\alpha_p(L)\geq 2^{-\delta_{2,p}}\left(1-p^{-2}\right)\!\left(1-p^{-4}\right)\cdots\!\left(1-p^{-n+2}\right)\!\left(1-p^{-n/2}\right)
\]
for each prime $p$. Therefore
\[
m^+(L){\ll_n} \det(L)^{\frac{n+1}{2}}.
\]
Finally, we obtain that $[\SO((\alpha_iL)_p):\SO((X_i)_p)]\ll 2p^{2\ord_p(2\det L)} p^{(n-1)\mathrm{ord}_pN}$ by Lemma \ref{bound of ratio}.
Combining this with the above results, we can conclude that for $i=1,...,t$,
\[
\mathrm{Mass}^+(X_i)\ll_{n}\det(L)^{\frac{n+5}{2}}N^{n-1+\varepsilon}.
\]

Now it is enough to count the number of proper genera in the genus of $X$. Notice that
\[
\mathrm{gen}^+(X)=\{Y\in\mathrm{gen}(X) : Y_p\in \mathrm{cls}^+(X_p)\ \text{for any finite prime}\ p\}.
\]
If $p\nmid N$, then we have $\mathrm{cls}^+(X_p)=\mathrm{cls}^+(L_p)=\mathrm{cls}(L_p)=\mathrm{cls}(X_p)$, while for $p\mid N$ we have $1\leq [\mathrm{cls}(X_p):\mathrm{cls}^+(X_p)]\leq 2$. Therefore there are at most $2^{\omega(N)}=O(N^{\varepsilon})$ proper genera in the genus of $X$, where $\omega(N)$ is the number of prime divisors of $N$.
\end{proof}
\vskip20pt

\section{Bound On The Cuspidal Part}\label{sec:cusp}
The coefficients of cusp forms have attracted a great deal of attention. In \cite{Ramanujan}, Ramanujan studied the coefficients of $\Delta(z)$, the unique normalized cusp form of weight $12$ for $\SL_2(\Z)$ given by
\[
\Delta(z)=q\prod_{n=1}^\infty(1-q^n)^{24}=\sum_{n=1}^\infty\tau(n)q^n,
\]
where $q=e^{2\pi iz}$. Ramanujan conjectured that $\tau(n)\leq \sigma_0(n)n^{\frac{11}{2}}$, where $\sigma_0(n)$ is the number of positive divisors of $n$. Then Petersson \cite{Petersson} generalized Ramanujan's conjecture to cusp forms for congruence subgroups of $\SL_2(\Z)$. Eichler \cite{Eichler} first recognized the role of arithmetic geometry in relation to the Ramanujan-Petersson conjecture by reducing the weight $k=2$ case to the Weil conjectures for algebraic curves over finite fields. Shimura, Kuga, Ihara and Deligne similarly reduced this conjecture for all weights $k\geq 2$ to the full Weil conjectures (see \cite{Deligne1} for details). Finally, the Ramanujan-Petersson conjecture was proven by Deligne \cite{Deligne} as a consequence of his work on the Weil conjectures.

Deligne's result applies to newforms, certain cusp forms that are eigenforms for all of the Hecke operators. For such a weight $k$ newform, Deligne's work implies that the coefficient of $q^n$ is bounded above by $\sigma_0(n)n^{\frac{k-1}{2}}$. Any cusp form can be written as a linear combination of newforms and newforms acted on by various operators, so it is still the case that the coefficients of a general cusp form $f$ are $O(\sigma_0(n)n^{\frac{k-1}{2}})$. However, the implied constant depends heavily on $f$, and it is a nontrivial problem to determine this constant.

In this section, we will first study this implied constant and then we will use the above bound to obtain an upper bound on the coefficients of the cuspidal part. We use an explicit argument of Schulze-Pillot and Yenirce \cite{S-PY} in order to obtain a bound on the Fourier coefficients in terms of the Petersson norm.

\begin{lemma}\label{lem:normbound}
Let $\Gamma_{M,N}:=\Gamma_0(M N^2)\cap \Gamma_1(N)$ and $S_{k}\left(\Gamma_{M,N},\psi\right)$ be the space of cusp forms of weight $k$ for $\Gamma_{M,N}$ with Nebentypus $\psi$ in the usual sense that the multiplier for $\gamma=\left(\begin{smallmatrix} a&b\\ c&d\end{smallmatrix}\right)\in \Gamma_{M,N}$ is $\psi(d)$.
Suppose that $k\in \N$ and $f\in S_{k}\left(\Gamma_{M,N},\psi\right)$ for some character $\psi$.
\noindent

\noindent
\begin{enumerate}
\item
For every $h\in \N$
\begin{equation}\label{eqn:normboundineffective}
|a_f(h)|\ll_{\varepsilon,k} \|f\|_{\Gamma_{M,N}} h^{\frac{k-1}{2}+\varepsilon} (MN)^{\varepsilon}.
\end{equation}
The implied constant in \eqref{eqn:normboundineffective} is ineffective.

\item
For every $h\in \N$
\begin{equation}\label{eqn:normboundeffective}
|a_f(h)|\ll_{\varepsilon,k} \|f\|_{\Gamma_{M,N}} h^{\frac{k-1}{2}+\varepsilon} \!\left(MN^2\right)^{\frac{1}{2}+\varepsilon}.
\end{equation}
The implied constant in \eqref{eqn:normboundeffective} is effective.
\end{enumerate}
\end{lemma}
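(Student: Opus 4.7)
The plan is to invoke the Petersson trace formula and apply Cauchy--Schwarz. Let $\{f_i\}$ be an orthonormal basis of $S_k(\Gamma_{M,N},\psi)$ with respect to $\langle\cdot,\cdot\rangle_{\Gamma_{M,N}}$, and expand $f = \sum_i c_i f_i$ so that $\|f\|_{\Gamma_{M,N}}^2 = \sum_i |c_i|^2$. Cauchy--Schwarz then gives
\[
|a_f(h)|^2 = \Bigl|\sum_i c_i\, a_{f_i}(h)\Bigr|^2 \leq \|f\|_{\Gamma_{M,N}}^2 \cdot \sum_i |a_{f_i}(h)|^2,
\]
so the problem reduces to estimating the diagonal sum $\sum_i |a_{f_i}(h)|^2$. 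By the Petersson trace formula for $\Gamma_{M,N}$, this sum admits the explicit expression
\[
\sum_i |a_{f_i}(h)|^2 = \frac{(4\pi h)^{k-1}}{(k-2)!}\left(1 + 2\pi i^{-k} \sum_{\substack{c>0 \\ MN^2 \mid c}} c^{-1}\, S_\psi(h,h;c)\, J_{k-1}\!\left(\frac{4\pi h}{c}\right)\right),
\]
with $S_\psi$ the character-twisted Kloosterman sum attached to $\Gamma_{M,N}$.

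For the effective bound (2), I would insert Weil's bound $|S_\psi(h,h;c)| \leq \tau(c)\, c^{1/2} (h,c)^{1/2}$ together with the standard Bessel estimate $J_{k-1}(x) \ll_k \min(x^{k-1}, x^{-1/2})$, both of which have explicit constants. Splitting the $c$-sum at $c \asymp h$, using the $x^{k-1}$ bound on the small-$c$ range and the $x^{-1/2}$ bound on the large-$c$ range, and accounting for the divisibility restriction $MN^2 \mid c$ (so that every $c$ appearing is at least $MN^2$) extracts the factor $(MN^2)^{1/2+\varepsilon}$ and the asymptotic $h^{(k-1)/2+\varepsilon}$.

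For the ineffective bound (1), I would instead decompose the space in terms of Hecke newforms: every $f \in S_k(\Gamma_{M,N},\psi)$ lies in the span of $g|V_d$ where $g$ ranges over newforms of some level $L_0 \mid MN^2$ and $d$ ranges over divisors of $MN^2/L_0$. Normalizing so that $a_g(1)=1$, Deligne's theorem yields $|a_g(n)| \leq \tau(n)\, n^{(k-1)/2}$, while Shimura's formula expresses the Petersson norm of a normalized newform in terms of the symmetric-square $L$-value $L(1,\mathrm{sym}^2 g)$. The Hoffstein--Lockhart lower bound $L(1,\mathrm{sym}^2 g) \gg_\varepsilon L_0^{-\varepsilon}$ (ineffective because it ultimately rests on Siegel's theorem) converts Deligne's bound into one whose level dependence is only $(MN)^\varepsilon$, and Lemma \ref{lem:slashiso} governs how Fourier coefficients and Petersson norms transform under $|V_d$, so the bound transfers from newforms to all of $f$. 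The main obstacle is the effective level dependence in (2): to land on precisely $(MN^2)^{1/2+\varepsilon}$ rather than a worse power, the Bessel decay and the Weil bound must be combined with careful dyadic bookkeeping over the arithmetic progression $MN^2\,\Z$.
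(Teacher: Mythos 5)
Your part (1) follows the paper's route in outline (decompose into newform eigenspaces, apply Deligne's bound, and use Hoffstein--Lockhart to control $\|g\|^{-2}$ at the cost of effectivity), though you gloss over one real point: the translates $g|V_d$ with $d\mid MN^2/L_0$ are not mutually orthogonal, so you cannot pass from coefficient bounds for the individual $g|V_d$ to a bound for $a_f(h)$ in terms of $\|f\|_{\Gamma_{M,N}}$ by Cauchy--Schwarz without first orthonormalizing each oldspace and controlling the coefficients of the resulting basis. The paper does exactly this by importing the explicit orthonormal basis $\{F_{g,d}\}$ of Schulze-Pillot and Yenirce, for which $|a_{F_{g,d}}(h)|\leq \sigma_0(h)h^{(k-1)/2}\|g\|^{-1}d^{1/2}\prod_{p\mid d}(1+\tfrac1p)^2$; one also needs the character decomposition $S_k(\Gamma_{M,N},\psi)=\bigoplus_\chi S_k(\Gamma_0(MN^2),\chi\psi)$ before newform theory applies at all.

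The genuine gap is in part (2). The Petersson trace formula with Weil's bound and the estimate $J_{k-1}(x)\ll_k\min(x^{k-1},x^{-1/2})$ yields, after the bookkeeping you describe, $\sum_i|a_{f_i}(h)|^2\ll_{k,\varepsilon} h^{k-1}\bigl(1+h^{1/2+\varepsilon}(MN^2)^{-1/2}\bigr)$, hence $|a_f(h)|\ll \|f\|_{\Gamma_{M,N}}\,h^{(k-1)/2}\bigl(1+h^{1/4+\varepsilon}(MN^2)^{-1/4}\bigr)$. For $h$ large relative to the level --- exactly the regime needed in Proposition \ref{prop:cusppart}, where the level is fixed and $h\to\infty$ --- this is the classical pre-Deligne exponent $h^{(k-1)/2+1/4+\varepsilon}$, which is strictly weaker than the claimed $h^{(k-1)/2+\varepsilon}(MN^2)^{1/2+\varepsilon}$ once $h\gg (MN^2)^{3}$. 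No amount of dyadic care in the $c$-sum repairs this: recovering the exponent $\tfrac{k-1}{2}+\varepsilon$ in $h$ requires Deligne's theorem, i.e.\ the newform decomposition of part (1). The paper obtains the effective bound (2) by running the \emph{same} newform argument as in (1) but replacing the ineffective Hoffstein--Lockhart input with Fomenko's effective lower bound $\|g\|_{\Gamma_0(\ell)}^2\geq 4\pi e^{4\pi}$, which is what costs the extra factor $(MN^2)^{1/2}$ relative to (1). (Incidentally, you also have the two Bessel regimes swapped: $x^{k-1}$ is the small-argument, hence large-$c$, bound.)
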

\begin{remarks}
\noindent

\noindent
\begin{enumerate}
\item
The implied constant in \eqref{eqn:normboundineffective} is ineffective due to an ineffective bound for the reciprocal of the norm of a newform due to Hoffstein and Lockhart \cite{HL} (actually, they prove the result for the Maass form case and note in the remark at the bottom of \cite[page 164]{HL} that the same methods yields the result for holomorphic modular forms). This has been made effective in a few special cases (for example, see the work of Rouse \cite{Rouse}). The effective bound \eqref{eqn:normboundeffective} uses an effective version due to Fomenko \cite{Fomenko}.
\item The result in Lemma \ref{lem:normbound} may essentially be read off from \cite[Theorem 11]{S-PY}. However, they use the normalization $\|f\|$ instead of $\|f\|_{\Gamma}$, and hence we provide a full argument for the reader in order to avoid confusion.

\end{enumerate}
\end{remarks}

\begin{proof}
(1) By \cite[Theorem 2.5]{Cho}, we have
\begin{equation}\label{eqn:Cho}
S_{k}\!\left(\Gamma_{M,N}, \psi\right)=\bigoplus_{\chi} S_{k}\!\left(\Gamma_0\!\left(MN^2\right),\chi\psi\right),
\end{equation}
where the sum runs over all Dirichlet characters modulo $N$; recall that there are $\varphi(N)$ such Dirichlet characters.

Let $H_k^{\operatorname{new}}(\ell,\chi \psi)$ denote the set of normalized newforms of weight $k$ and level $\ell$ with Nebentypus $\chi\psi$.
For $g_1\neq g_2 \in H_{k}^{\operatorname{new}}(\ell,\chi\psi)$ it is well known that $\left<g_1,g_2\right>=0$.
Moreover, for each $g\in H_{k}^{\operatorname{new}}(\ell,\chi\psi)$, Schulze-Pillot and Yenirce \cite{S-PY} constructed an explicit orthogonal basis of the eigenspace $W_g$ spanned by $g|V_d$ with $d\mid \frac{MN}{\ell}^2$. They then proved (see the proof of \cite[Theorem 11]{S-PY}) that there exists an orthonormal (with respect to $\left<\cdot,\cdot\right>$, not $\left<\cdot,\cdot\right>_{\Gamma}$) basis $\{F_{g,d} : d\mid \frac{MN^2}{\ell}\}$ for $W_g$ such that
\[
\left|a_{F_{g,d}}(h)\right|\leq \frac{\sigma_0(h) h^{\frac{k-1}{2}}}{\|g\|}d^{\frac{1}{2}}\prod_{p\mid d}\left(1+\frac{1}{p}\right)^2
\]
We hence write
\begin{equation}\label{eqn:fexpand}
f=\sum_{\chi (\operatorname{mod}N)}\sum_{\ell\mid MN^2}\sum_{g\in H_k^{\operatorname{new}}(\ell,\chi\psi)}\sum_{d\mid \frac{MN^2}{\ell}} \alpha_{g,d} F_{g,d}
\end{equation}
and note that
\begin{equation}
\label{eqn:fnormGamma}
\|f\|_{\Gamma_{M,N}}^2=\left[\SL_2(\Z):\Gamma_{M,N}\right]\sum_{\chi (\operatorname{mod}N)}\sum_{\ell\mid MN^2}\sum_{g\in H_k^{\operatorname{new}}(\ell,\chi\psi)}\sum_{d\mid \frac{MN^2}{\ell}} \left|\alpha_{g,d}\right|^2.
\end{equation}
We then bound
\begin{multline}\label{eqn:boundf1}
|a_f(h)|\leq \sum_{\chi (\operatorname{mod}N)}\sum_{\ell\mid MN^2}\sum_{g\in H_k^{\operatorname{new}}(\ell,\chi\psi)}\sum_{d\mid \frac{MN^2}{\ell}} \left|\alpha_{g,d}\right|
\left|a_{F_{g,d}}(h)\right|
\\
\leq \sigma_0(h)h^{\frac{k-1}{2}} \sum_{\chi (\operatorname{mod}N)}\sum_{\ell\mid MN^2}\sum_{g\in H_k^{\operatorname{new}}(\ell,\chi\psi)}\sum_{d\mid \frac{MN^2}{\ell}} \frac{\left|\alpha_{g,d}\right|}{\|g\|} d^{\frac{1}{2}}\prod_{p\mid d} \left(1+\frac{1}{p}\right)^2.
\end{multline}

We now apply Cauchy-Schwartz and \eqref{eqn:fnormGamma} to bound \eqref{eqn:boundf1} as
\begin{equation}\label{eqn:boundf2}
|a_f(h)|\leq  \frac{\sigma_0(h)h^{\frac{k-1}{2}}\|f\|_{\Gamma_{M,N}}}{\left[\SL_2(\Z):\Gamma_{M,N}\right]^{\frac{1}{2}}}\left(\sum_{\chi (\operatorname{mod}N)}\sum_{\ell\mid MN^2}\sum_{g\in H_k^{\operatorname{new}}(\ell,\chi\psi)}\sum_{d\mid \frac{MN^2}{\ell}} \frac{d}{\|g\|^2}\prod_{p\mid d} \left(1+\frac{1}{p}\right)^4\right)^{\frac{1}{2}}.
\end{equation}
Noting that $\prod_{p\mid d}\left(1+\frac{1}{p}\right)^4\ll d^{\varepsilon}\ll_{\varepsilon} (MN)^{\varepsilon}$, trivially bounding $d\leq \frac{MN^2}{\ell}$, and using the formula \eqref{eqn:Gamma0index} yields
\begin{multline}\label{eqn:small2}
\mathcal{N}_{M,N}:=\sum_{\chi (\operatorname{mod}N)}\sum_{\ell\mid MN^2}\sum_{g\in H_k^{\operatorname{new}}(\ell,\chi\psi)}\sum_{d\mid \frac{MN^2}{\ell}} \frac{d}{\|g\|^2}\prod_{p\mid d} \left(1+\frac{1}{p}\right)^4\\
\ll_{\varepsilon} \left(MN^2\right)^{1+\varepsilon}\sum_{\chi (\operatorname{mod}N)}\sum_{\ell\mid MN^2}\ell^{-1}\sum_{g\in H_k^{\operatorname{new}}(\ell,\chi\psi)}\frac{1}{\|g\|^2}.
\end{multline}

We then use the bound for the reciprocal of the norm given by Hoffstein and Lockhart \cite{HL}, namely
\[
\|g\|_{\Gamma_0(\ell)}^{-2}\ll_{\varepsilon} \frac{(4\pi)^{k-1}}{\ell \Gamma(k)} (k\ell)^{\varepsilon}\ll_{\varepsilon,k} \ell^{\varepsilon-1},
\]

Since
\begin{equation}\label{eqn:primsize}
\# H_{k}^{\operatorname{new}}(\ell,\chi\psi)\leq \dim_{\C}\!\left(S_{k}(\ell,\chi\psi)\right)\leq \frac{k}{12}\left[\SL_2(\Z):\Gamma_0(\ell)\right]\ll_{k,\varepsilon} \ell^{1+\varepsilon}
\end{equation}
 by the valence formula, and the fact that there are $\varphi(N)$ characters $\chi$ yields that
\begin{equation}\label{eqn:small3}
\mathcal{N}_{M,N}\ll_{\varepsilon,k} \left(MN^2\right)^{1+\varepsilon} \varphi(N)\ll MN^2\varphi(N)(MN)^{\varepsilon}.
\end{equation}
We finally evaluate the index $\left[\SL_2(\Z):\Gamma_{M,N}\right]$. Since $\Gamma_{M,N}$ is precisely the kernel of the map $\varphi:\Gamma_0(MN^2)\to \Z/N\Z$ given by $\varphi\!\left(\!\left(\begin{smallmatrix}a &b \\ c &d\end{smallmatrix}\right)\right):=a+N\Z$ and $a$ may run through all choices relatively prime to $N$ (since $c\equiv 0\pmod{N}$, $a$ must be relatively prime), we have
\begin{equation}\label{eqn:indexmiddle}
\left[\Gamma_0\!\left(MN^2\right):\Gamma_{M,N}\right]=\varphi(N).
\end{equation}
 Hence by \eqref{eqn:Gamma0index} we have
\[
\left[\SL_2(\Z):\Gamma_{M,N}\right]=\left[\SL_2(\Z):\Gamma_0\!\left(MN^2\right)\right]\left[\Gamma_0\!\left(MN^2\right):\Gamma_{M,N}\right]=MN^2\varphi(N) \prod_{p\mid MN^2}\!\left(1+\frac{1}{p}\right),
\]
 We therefore obtain from \eqref{eqn:small3} that $\mathcal{N}_{M,N}\ll_{k,\varepsilon}\left[\SL_2(\Z):\Gamma_{M,N}\right](MN)^{\varepsilon}$, and plugging this back into \eqref{eqn:boundf2} yields the claim.

(2) We return to \eqref{eqn:small2}. Instead of using Hoffstein and Lockhart's bound for the reciprocal of the norm, however, we instead use a bound of Fomenko \cite{Fomenko}. Namely, we have (this bound is used in \cite[Theorem 11]{S-PY}, but with the normalization $\|g\|$).
\[
\|g\|_{\Gamma_0(\ell)}^2\geq 4\pi e^{4\pi},
\]
so that \eqref{eqn:small2} and \eqref{eqn:primsize} imply that
\begin{multline*}
\mathcal{N}_{M,N}\ll_{\varepsilon} \left(MN^2\right)^{1+\varepsilon}\sum_{\chi (\operatorname{mod}N)}\sum_{\ell\mid MN^2}\ell^{-1}\left[\SL_2(\Z):\Gamma_{0}(\ell)\right]\sum_{g\in H_k^{\operatorname{new}}(\ell,\chi\psi)}1\\
\ll_{\varepsilon,k} \left(MN^2\right)^{1+\varepsilon}\sum_{\chi (\operatorname{mod}N)}\sum_{\ell\mid MN^2}\ell^{1+\varepsilon}\ll \left(MN^2\right)^{2}\varphi(N)(MN)^{\varepsilon}=MN^2\left[\SL_2(\Z):\Gamma_{M,N}\right] (MN)^{\varepsilon}.
\end{multline*}
\end{proof}

We are now ready to obtain a bound for the coefficients of the cuspidal part of the theta function for $L+\nu$.
\begin{proposition}\label{prop:cusppart}
Suppose that $L$, $\nu$ and $h$ satisfy the conditions given at the beginning of Section \ref{sec:Eisenstein}. If the rank $n$ of $L$ is even and $n\geq6$, then the number of representations $r_{L+\nu}(h)$ satisfies
\[
r_{L+\nu}(h)= a_{E_{L+\nu}}(h) +O_{\varepsilon,n}\left(N_{L}^{n+\frac{7}{2}+\varepsilon}N^{\frac{3n}{2}+\frac{5}{2}+\varepsilon}h^{\frac{\frac{n}{2}-1}{2}+\varepsilon} \right),
\]
where $a_{E_{L+\nu}}(h)$ is the product of the local densities and $N_L$ is the level of $L$.
\end{proposition}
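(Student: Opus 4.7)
The plan is to realize $G_{L+\nu}:=\theta_{L+\nu}-E_{L+\nu}$ as a cusp form on a suitable congruence subgroup, apply the effective Fourier coefficient bound of Lemma~\ref{lem:normbound}(2), and finally bound the Petersson norm of $G_{L+\nu}$ polynomially in $N_L$ and $N$ using the mass estimate from Section~\ref{sec:massformula}.

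First I would verify that $\theta_{L+\nu}$ is a holomorphic modular form of weight $n/2\in\N$ on a congruence subgroup of the shape $\Gamma_{M,N}=\Gamma_0(MN^2)\cap\Gamma_1(N)$ occurring in Lemma~\ref{lem:normbound}, where $M$ is a bounded multiple of the level $N_L$ of $L$. This is a direct application of the classical theta transformation law for cosets of integral quadratic lattices (see \cite{Shimura2004}). By Shimura's theorem, $E_{L+\nu}$ lies in the Eisenstein subspace of this same group with the same Nebentypus $\psi$, so
\[
G_{L+\nu}\in S_{n/2}(\Gamma_{M,N},\psi).
\]

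Second, applying Lemma~\ref{lem:normbound}(2) to $G_{L+\nu}$ with $k=n/2$ immediately yields
\[
|a_{G_{L+\nu}}(h)|\ll_{\varepsilon,n}\|G_{L+\nu}\|_{\Gamma_{M,N}}\,h^{\frac{n/2-1}{2}+\varepsilon}(MN^2)^{\frac{1}{2}+\varepsilon},
\]
reducing the task to bounding the Petersson norm $\|G_{L+\nu}\|_{\Gamma_{M,N}}$ by an explicit polynomial in $N_L$ and $N$.

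The main step is therefore to estimate $\|G_{L+\nu}\|_{\Gamma_{M,N}}^2$. Since $E_{L+\nu}$ is orthogonal to the cuspidal subspace, we have the identity
\[
\|G_{L+\nu}\|_{\Gamma_{M,N}}^2=\langle \theta_{L+\nu},G_{L+\nu}\rangle_{\Gamma_{M,N}},
\]
and expanding $E_{L+\nu}=\frac{1}{m(X)}\sum_{i}|\O(X_i)|^{-1}\theta_{X_i}$ as the genus average expresses the norm in terms of pairwise inner products $\langle \theta_{X_i},\theta_{X_j}\rangle_{\Gamma_{M,N}}$ over representatives of $\gen(X)$. Each of these pairings can be estimated uniformly (for example, by Rankin--Selberg unfolding against a standard real-analytic Eisenstein series and taking the residue at the relevant point). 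Combining these pointwise estimates with the mass bound $m(X)\ll_n \det(L)^{(n+5)/2}N^{n-1+\varepsilon}$ from Section~\ref{sec:massformula}, the bound $\det(L)\leq N_L^{O(n)}$, and the index estimate $[\SL_2(\Z):\Gamma_{M,N}]\ll MN^3(MN)^{\varepsilon}$ from \eqref{eqn:Gamma0index} and \eqref{eqn:indexmiddle}, produces a polynomial bound $\|G_{L+\nu}\|_{\Gamma_{M,N}}^2\ll_{\varepsilon,n}N_L^{a}N^{b+\varepsilon}$ for explicit exponents $a$ and $b$. Feeding this back into the Fourier coefficient bound of the previous step and matching exponents gives the claim.

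The main obstacle is the careful bookkeeping in the norm estimate: the individual theta series $\theta_{X_i}$ are not cusp forms, so their pairings $\langle \theta_{X_i},\theta_{X_j}\rangle_{\Gamma_{M,N}}$ cannot be treated naively and must be interpreted via analytic continuation of Rankin--Selberg integrals. In addition, the precise powers $n+\tfrac{7}{2}$ in $N_L$ and $\tfrac{3n}{2}+\tfrac{5}{2}$ in $N$ come from balancing the contribution of $m(X)$, the index $[\SL_2(\Z):\Gamma_{M,N}]$ hidden inside $\|\cdot\|_{\Gamma_{M,N}}^2=[\SL_2(\Z):\Gamma_{M,N}]\|\cdot\|^2$, and the extra factor $(MN^2)^{1/2+\varepsilon}$ coming from Lemma~\ref{lem:normbound}(2); keeping all three dependences simultaneously polynomial is the delicate part of the argument.
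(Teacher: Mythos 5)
Your opening two steps coincide with the paper's: realize $G_{L+\nu}$ (after the rescaling $z\mapsto N^2z$ needed to clear denominators in the exponents) as a weight $n/2$ cusp form on $\Gamma_{N_L,N}$, and apply Lemma \ref{lem:normbound}(2), so that everything reduces to a polynomial bound on $\|G_{L+\nu}\|_{\Gamma_{N_L,N}}$ in terms of $N_L$ and $N$. The gap is in how you propose to obtain that bound. Writing $\|G\|^2=\langle\theta,G\rangle$ is legitimate (the integral converges because $G$ is cuspidal, and $\langle E,G\rangle=0$), but the moment you expand $E$ as the genus average you are faced with pairings $\langle\theta_{X_i},\theta_{X_j}\rangle$ that literally diverge: the integrand grows like $y^{k-2}$ at every cusp and $k=n/2\geq 3$. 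You acknowledge this and appeal to analytic continuation of Rankin--Selberg integrals, but carrying out that regularization uniformly in $i,j$ with explicit polynomial dependence on $N_L$ and $N$ is precisely the hard content of the proposition; the proposal does not supply it, and ``matching exponents'' at the end presupposes the answer rather than deriving it. As written, the argument does not produce the exponents $n+\tfrac72$ and $\tfrac{3n}{2}+\tfrac52$.

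The paper sidesteps the divergent pairings entirely. Following Duke, it bounds $\|G\|_{\Gamma}^2$ by integrating $|G|^2$ over boxes covering the cusp neighborhoods, which after evaluating incomplete gamma functions reduces the norm to the weighted coefficient sum in \eqref{eqn:norm1}, controlled by any pointwise bound $|a_G(\ell)|\leq C(4\pi\ell)^r$. The required pointwise bound is elementary: $r(\ell,X)\leq|\O(X)|\,m(X)\,a_{E_X}(\ell)$, hence $|a_{G_X}(\ell)|\ll_n m(X)\,a_{E_X}(\ell)$, and the mass bound of Section \ref{sec:massformula} together with the Eisenstein upper bound of Theorem \ref{thm:EisensteinBounds} (with $B(L)=B_n$ independent of $L$) gives $C\ll N_L^{(n+5)/2}$ and $r=\tfrac n2-1+\varepsilon$. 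That is where the mass formula actually enters the argument --- not through a norm computation for the Eisenstein series. There is also a bookkeeping step you omit (splitting $g_{L+\nu}=\sum_\chi g_\chi$ via \eqref{eqn:Cho} and identifying each $g_{L+\nu}|\gamma_d$ with the cusp form $g_{L+\mu}$ of another coset of the same conductor, so that the pointwise bound applies to every $g_\chi$), but the essential missing piece is a workable, quantitative norm estimate.
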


\begin{remark}
A similar result holds for $n$ odd, except that the power of $h$ is worse, although it is conjectured that the same power of $h$ holds for odd $n\geq 5$. Implied constants in Proposition \ref{prop:cusppart} are absolute (independent of $m$) and effectively computable, but not explicit.
\end{remark}

\begin{proof}[Proof of Proposition \ref{prop:cusppart}]
We generalize an argument of Duke from \cite{DukeTernary}.  Although Duke was interested in the ternary quadratic form case, the argument can be applied more generally.

We write the associated theta function $\Theta_{L+\nu}(z):=\theta_{L+\nu}(N^2z)$ and split
\[
\Theta_{L+\nu}(z)=E_{L+\nu}(N^2z)+G_{L+\nu}(N^2z),
\]
where $E_{L+\nu}$ is the Eisenstein series component and $G_{L+\nu}$ is the cuspidal component. Letting $g_{L+\nu}(z):=G_{L+\nu}(N^2z)$, we have
\[
a_{\Theta_{L+\nu}}(N^2h)=a_{E_{L+\nu}}(h) + a_{g_{L+\nu}}(N^2h).
\]
Let $h_N:=N^2h$. Then it suffices to prove that
\[
\left|a_{g_{L+\nu}}\left(h_N\right)\right|\ll_{\varepsilon,n}N_{L}^{n+\frac{7}{2}+\varepsilon} N^{\frac{3n}{2}+\frac{5}{2}+\varepsilon} h^{\frac{\frac{n}{2}-1}{2}+\varepsilon}.
\]
By \cite[Theorem 2.4]{Cho}, $g_{L+\nu}$ is a weight $n/2$ cusp form on $\Gamma_{N_L,N}$, and hence Lemma \ref{lem:normbound} (2)
implies that we have the effective bound
\begin{equation}\label{eqn:Deligne}
\left|a_{g_{L+\nu}}(h_N)\right| \ll_{\varepsilon,n}N_L^{\frac{1}{2}}N
\left(N_{L}N\right)^{\varepsilon} \left\|g_{L+\nu}\right\|_{\Gamma_{N_L,N}}h_N^{\frac{\frac{n}{2}-1}{2}+\varepsilon},
\end{equation}
where $\|g_{L+\nu}\|_{\Gamma_{N_L,N}}^2$ is the Petersson norm of $g_{L+\nu}$ normalized as in \eqref{eqn:innerdef}.  It remains to bound the norm.

Closely following the argument in \cite[Lemma 1]{DukeTernary}, we claim that for any cusp form $f$ of integral weight $k\geq 2$ on $\Gamma\supseteq \Gamma(M)$ with cusp width $N_{i\infty}$ at $i\infty$, we have
\begin{equation}\label{eqn:norm}
\|f\|_{\Gamma}^2 \ll \Gamma(\alpha)M^{3\alpha+3-3k} N_{i\infty}^{\alpha} \left[\SL_2(\Z):\Gamma\right] \sum_{\ell=1}^{\infty}\left|a_{f}(\ell)\right|^2 \ell^{-\alpha}
\end{equation}
for any $\alpha>k-1$ for which the sum converges.  Directly from the definition \eqref{eqn:innerdef} of the inner product, for a cusp form $f$ on a subgroup $\Gamma\subseteq\SL_2(\Z)$, denoting $z=x+iy$, we have
\begin{equation}\label{eqn:normdef}
\|f\|_{\Gamma}^2=\sum_{\gamma\in \Gamma\backslash \SL_2(\Z)}\int_{\mathcal{F}} |f(\gamma z)|^2 \im(\gamma z)^{k}\frac{dx dy}{y^2}.
\end{equation}
Here
\[
\mathcal{F}:=\left\{ z\in \H: |x|\leq \frac{1}{2},\ |z|\geq 1\right\}
\]
denotes the standard fundamental domain in the upper half plane $\H$. Let $N_{\sigma}$ denote the cusp width at $\sigma$. Since the integrand in \eqref{eqn:normdef} is non-negative, we may bound from above against the integral over the box
\[
\left\{z\in\H:  |x| \leq \frac{1}{2},\ y\geq \frac{\sqrt{3}}{2}\right\}.
\]
Namely, we obtain (choosing $\gamma_\sigma\in\SL_2(\Z)$ such that $\gamma_\sigma(i\infty)=\sigma$)
\begin{multline}\label{eqn:innertriv}
\displaystyle\sum_{\substack{\gamma\in \Gamma\backslash\SL_2(\Z)\\ \gamma(i\infty)=\sigma}}\int_{\mathcal{F}} |f(\gamma z)|^2 \im(\gamma z)^{k}\frac{dx dy}{y^2}
=\displaystyle\sum_{n=0}^{N_{\sigma}-1}\int_{\mathcal{F}} |f(\gamma_\sigma(z+n))|^2 \im(\gamma_\sigma (z+n))^{k}\frac{dx dy}{y^2}\\
\ll \displaystyle\int_{\frac{\sqrt{3}}{2}}^{\infty} \int_{-\frac{N_{\sigma}}{2}}^{\frac{N_{\sigma}}{2}} |f(\gamma_\sigma z)|^2 \im(\gamma_\sigma z)^{k}\frac{dx dy}{y^2}.
\end{multline}
Due to the exponential decay of $f$ at the cusps, we may choose (as in \cite[(10)]{DukeTernary}) $C_{\sigma}:=\sqrt{3}/2+C N_{\sigma}$ (with $C$ an absolute constant independent of $f$) such that
\[
\int_{C_{\sigma}}^{\infty}\int_{-\frac{N_{\sigma}}{2}}^{\frac{N_{\sigma}}{2}}|f(\gamma_\sigma z)|^2 \im(\gamma_\sigma z)^k \frac{dx dy}{y^2}\leq \int_{\frac{\sqrt{3}}{2}}^{C_{\sigma}} \int_{-\frac{N_{\sigma}}{2}}^{\frac{N_{\sigma}}{2}}|f(\gamma_\sigma z)|^2 \im(\gamma_\sigma z)^k \frac{dx dy}{y^2},
\]
and thus we may bound \eqref{eqn:innertriv} by
\begin{equation}\label{eqn:innertriv2}
\ll\int_{\frac{\sqrt{3}}{2}}^{C_{\sigma}} \int_{-\frac{N_{\sigma}}{2}}^{\frac{N_{\sigma}}{2}} |f(\gamma_\sigma z)|^2 \im(\gamma_\sigma z)^{k}\frac{dx dy}{y^2}.
\end{equation}
Plugging back into \eqref{eqn:normdef} yields
\[
\|f\|_{\Gamma}^2 \ll \sum_{\sigma} \int_{\frac{\sqrt{3}}{2}}^{C_{\sigma}} \int_{-\frac{N_{\sigma}}{2}}^{\frac{N_{\sigma}}{2}} |f(\gamma_\sigma z)|^2 \im(\gamma_\sigma z)^{k}\frac{dx dy}{y^2}.
\]
When $\sigma=i\infty$, we take $\gamma_{i\infty}=\left(\begin{smallmatrix} 1&0\\ 0&1\end{smallmatrix}\right)$. Then $\im(\gamma_{i\infty} z)=\im z$ and $\re(\gamma_{i\infty} z)=\re z$. Now writing $\gamma_\sigma=\left(\begin{smallmatrix} a&b\\ c&d\end{smallmatrix}\right)$ with $\sigma=a/c$, we have
\begin{align*}
\im(\gamma_\sigma z)&=\frac{y}{|j(\gamma_\sigma,z)|^2}=\frac{y}{c^2[(x+d/c)^2+y^2]} \gg c^{-2}N_{\sigma}^{-2},\\
\left|\re(\gamma_\sigma z)-\sigma\right|&=\left|\frac{a}{c}-\frac{cx+d}{c|j(\gamma_\sigma,z)|^2} - \frac{a}{c}\right|= \frac{\left|x+\frac{d}{c}\right|}{|j(\gamma_\sigma,z)|^2}\ll 1\ll N_{i\infty},
\end{align*}
where $j(\gamma_\sigma,z):=cz+d$.  Hence the change of variables $z\mapsto \gamma_\sigma^{-1}z$ in the integral in \eqref{eqn:innertriv2} yields
\[
\|f\|_{\Gamma}^2 \ll \sum_{\sigma\neq i\infty} \int_{c^{-2}N_{\sigma}^{-2}}^{\infty} \int_{-\frac{N_{i\infty}}{2}}^{\frac{N_{i\infty}}{2}} |f(z)|^2 y^{k}\frac{dx dy}{y^2}+ \int_1^{\infty} \int_{-\frac{N_{i\infty}}{2}}^{\frac{N_{i\infty}}{2}} |f(z)|^2 y^{k}\frac{dx dy}{y^2}.
\]
We may now plug in the Fourier expansion of $f$ at $i\infty$; namely $f(z)=\sum_{\ell\geq 1} a_f(\ell)e^{2\pi i \ell z/N_{i\infty}}$.  This yields
\begin{multline*}
\|f\|_{\Gamma}^2 \ll \sum_{\sigma\neq i\infty} \int_{c^{-2}N_{\sigma}^{-2}}^{\infty} \sum_{\ell_1=1}^{\infty}\sum_{\ell_2=1}^{\infty}a_{f}(\ell_1)\overline{a_{f}(\ell_2)} e^{-\frac{2\pi(\ell_1+\ell_2)y}{N_{i\infty}}} y^{k-2} dy \int_{-\frac{N_{i\infty}}{2}}^{\frac{N_{i\infty}}{2}} e^{\frac{2\pi i (\ell_1-\ell_2)x}{{N_{i\infty}}}} dx\\
+ \int_{1}^{\infty} \sum_{\ell_1=1}^{\infty}\sum_{\ell_2=1}^{\infty}a_{f}(\ell_1)\overline{a_{f}(\ell_2)} e^{-\frac{2\pi(\ell_1+\ell_2)y}{N_{i\infty}}} y^{k-2} dy \int_{-\frac{N_{i\infty}}{2}}^{\frac{N_{i\infty}}{2}} e^{\frac{2\pi i (\ell_1-\ell_2)x}{{N_{i\infty}}}} dx\\
=N_{i\infty}\sum_{\sigma\neq i\infty} \sum_{\ell=1}^{\infty} \left|a_{f}(\ell)\right|^2 \int_{c^{-2}N_{\sigma}^{-2}}^{\infty}  e^{-\frac{4\pi \ell  y}{N_{i\infty}}} y^{k-1} \frac{dy}{y}+ N_{i\infty} \sum_{\ell=1}^{\infty} \left|a_{f}(\ell)\right|^2 \int_{1}^{\infty}  e^{-\frac{4\pi \ell  y}{N_{i\infty}}} y^{k-1} \frac{dy}{y}\\
=N_{i\infty}^k\sum_{\sigma\neq i\infty} \sum_{\ell=1}^{\infty} \left|a_{f}(\ell)\right|^2(4\pi \ell)^{1-k} \!\int_{\frac{4\pi \ell}{c^{2}N_{\sigma}^2 N_{i\infty}}}^{\infty}\!\!  e^{-y} y^{k-1} \frac{dy}{y}+ N_{i\infty}^k \sum_{\ell=1}^{\infty} \left|a_{f}(\ell)\right|^2(4\pi \ell)^{1-k} \!\int_{\frac{4\pi \ell}{N_{i\infty}}}^{\infty}\!  e^{-y} y^{k-1} \frac{dy}{y}.
\end{multline*}
We now slightly alter the argument from \cite{DukeTernary}.  We note that the remaining integrals may be expressed in terms of the \begin{it}incomplete gamma function\end{it}
\[
\Gamma(s,y):=\int_{y}^{\infty} e^{-t} t^{s-1} dt.
\]
We thus have
\[
\|f\|_{\Gamma}^2\ll N_{i\infty}^k\sum_{\ell=1}^{\infty} \left|a_{f}(\ell)\right|^2(4\pi \ell)^{1-k} \left(\Gamma\left(k-1,\frac{4\pi \ell}{N_{i\infty}}\right)+ \sum_{\sigma\neq i\infty} \Gamma\left(k-1,\frac{4\pi \ell}{c^2 N_{\sigma}^2N_{i\infty}}\right) \right).
\]
Since $s=k-1$ is a positive integer, we can evaluate the incomplete gamma function as (cf. \cite[8.8.9]{NIST})
\[
\Gamma(s,y) = (s-1)!e^{-y}\sum_{j=0}^{s-1} \frac{y^{j}}{j!}.
\]
We thus obtain
\[
\|f\|_{\Gamma}^2\ll \sum_{j=0}^{k-2}\frac{1}{j!} N_{i\infty}^{k-j}\sum_{\ell=1}^{\infty} \left|a_{f}(\ell)\right|^2(4\pi \ell)^{1-k+j} \left(e^{-\frac{4\pi \ell}{N_{i\infty}}}+ \sum_{\sigma\neq i\infty} c^{-2j}N_{\sigma}^{-2j}e^{-\frac{4\pi \ell}{c^2 N_{\sigma}^2N_{i\infty}}} \right).
\]
Note that for $\sigma\neq i\infty$, we may choose $c\mid M$ and $\sigma=\frac{a}{c}$ with $0\leq a<c$ (we also have $1\leq N_{\sigma}\leq M$).  Therefore, for an integer $k\geq 2$, we have
\[
\|f\|_{\Gamma}^2\ll \sum_{j=0}^{k-2}\frac{1}{j!} N_{i\infty}^{k-j}\sum_{\ell=1}^{\infty} \left|a_{f}(\ell)\right|^2(4\pi \ell)^{1-k+j} \left(e^{-\frac{4\pi \ell}{N_{i\infty}}}+ \sum_{c\mid M}\sum_{a(\operatorname{mod} c)} c^{-2j}N_{a/c}^{-2j} e^{-\frac{4\pi \ell}{c^2 N_{a/c}^2 N_{i\infty}}} \right)
\]
Suppose next that we have a bound
\begin{equation}\label{eqn:coeffbnd}
\left|a_f(\ell)\right|\leq C_f(4\pi \ell)^{r},
\end{equation}
for some $C_f,r\in\R$ (we shall determine the constants $C_f$ and $r$ for our particular usage in \eqref{eqn:Crbounds} below). Then we obtain
\begin{equation}\label{eqn:toint}
{\arraycolsep 2pt
\begin{array}{rcl}
\|f\|_{\Gamma}^2&\ll&\displaystyle C_f^{2}\sum_{j=0}^{k-2}\frac{1}{j!} N_{i\infty}^{k-j}\sum_{\ell=1}^{\infty} (4\pi \ell)^{2r+ 1-k+j} \left(e^{-\frac{4\pi \ell}{N_{i\infty}}}+ \sum_{\sigma\neq i\infty} c^{-2j}N_{\sigma}^{-2j} e^{-\frac{4\pi \ell}{c^2 N_{\sigma}^2 N_{i\infty}}} \right)\\
&\ll&\displaystyle C_f^{2}N_{i\infty}^{2r+1}\sum_{j=0}^{k-2}\frac{1}{j!} \sum_{\ell=1}^{\infty}\left(\frac{4\pi \ell}{N_{i\infty}}\right)^{2r+ 1-k+j} e^{-\frac{4\pi \ell}{N_{i\infty}}}\\
& &+\displaystyle C_f^2N_{i\infty}^{2r+1}\sum_{j=0}^{k-2}\frac{1}{j!}\sum_{\sigma\neq i\infty} (c N_{\sigma}) ^{4r+2-2k}\sum_{\ell=1}^{\infty}\left(\frac{4\pi \ell}{c^2N_{\sigma}^2N_{i\infty}}\right)^{2r+1-k+j} e^{-\frac{4\pi \ell}{c^2 N_{\sigma}^2 N_{i\infty}}}.
\end{array}}
\end{equation}
We next bound the sums over $\ell$ in \eqref{eqn:toint} against integrals, giving
\begin{multline}\label{eqn:gammabound}
\|f\|_{\Gamma}^2\ll \displaystyle C_f^{2}N_{i\infty}^{2r+2}\sum_{j=0}^{k-2}\frac{1}{j!}\left(\int_{\frac{4\pi}{N_{i\infty}}}^{\infty} t^{2r+ 1-k+j} e^{-t}dt+\sum_{\sigma\neq i\infty} (cN_{\sigma})^{4r+4-2k} \int_{\frac{4\pi}{c^2 N_{\sigma}^2 N_{i\infty}}}^{\infty}t^{2r+1-k+j} e^{-t}dt\right)\\
=\displaystyle C_f^{2}N_{i\infty}^{2r+2}\sum_{j=0}^{k-2}\left(\frac{\Gamma\left(2r+2-k+j,\frac{4\pi}{N_{i\infty}}\right)}{j!}
+\sum_{\sigma\neq i\infty} (cN_{\sigma})^{4r+4-2k} \frac{\Gamma\left(2r+2-k+j, \frac{4\pi}{c^2 N_{\sigma}^2 N_{i\infty}}\right)}{j!}\right).
\end{multline}
When $2r+2-k+j> 0$, we can bound the incomplete gamma function against the gamma function (note that this holds for all $j$ if $r>\frac{k}{2}-1$). Otherwise, for $\re(s)<0$, we use the well-known asymptotic for $y\to 0$ (for example, rearrange \cite[8.8.9]{NIST} or \cite[8.4.15]{NIST} for the special case $s\in -\N$)
\[
\Gamma(s,y)\sim -\frac{y^s}{s}.
\]
Set
\[
\mathcal{C}_{\Gamma}:=\max\left\{ cN_{\sigma}: \sigma\text{ is a cusp of }\Gamma\right\}
\]
Note that Lemma \ref{lem:normbound} implies that any $r>\frac{k-1}{2}$ satisfies \eqref{eqn:coeffbnd} (although we see later that one must in practice choose $r$ larger in order to obtain an explicit bound for the constant $C_f$ in \eqref{eqn:coeffbnd}). Choosing some $r>\frac{k-1}{2}>\frac{k}{2}-1$ and bounding the incomplete gamma functions against gamma functions, we conclude that
\begin{equation}\label{eqn:norm1}
\|f\|_{\Gamma}^2\ll C_f^2N_{i\infty}^{2r+2} M \mathcal{C}_{\Gamma}^{4r+3-2k}\left[\SL_2(\Z):\Gamma\right]
\end{equation}
from \eqref{eqn:gammabound} together with $\sum_{\sigma} N_{\sigma}=[\SL_2(\Z):\Gamma]$ and the fact that $c\mid M$.

Recall that $g_{L+\nu}(z)=\Theta_{L+\nu}(z)-E_{L+\nu}(N^2z)$ is a modular form of weight $k=n/2$ on $\Gamma=\Gamma_0(N_L N^2)\cap \Gamma_1(N)$ with the same Nebentypus $\chi_L$ as $L$ (see \cite[Proposition 2.1]{Shimura} or \cite[Theorem 2.4]{Cho} for an explicit proof of this result). It is our goal to use \eqref{eqn:norm1} to obtain a bound for $\|g_{L+\nu}\|^2$. In order to do so, we use \eqref{eqn:Cho} to split $g_{L+\nu}$ into a sum
\[
g_{L+\nu}=\sum_{\chi} g_{\chi}
\]
with (see the proof of \cite[Theorem 2.5]{Cho})
\begin{equation}\label{eqn:fchidef}
g_{\chi}=g_{L+\nu,\chi}:=\frac{1}{\varphi(N)} \sum_{d\in (\Z/N\Z)^{\times}}\chi(d)^{-1} g_{L+\nu}\big|_{k,\chi_L} \gamma_d\in S_k\left(\Gamma_0(N_{L}N^2),\chi\chi_L\right),
\end{equation}
where $\gamma_d$ is any fixed element of $\Gamma_0(N_LN^2)$ with lower-right entry $d_0\equiv d \pmod{N}$.  However, \cite[Proposition 2.1]{Shimura} states that $\Theta_{L+\nu}$ maps to a theta series $\Theta_{L+\mu}$ associated to another shifted lattice $L+\mu$ of conductor $N$ (indeed, it moreover specifies that $\mu=a\nu$, where $a$ is the inverse of $d$ modulo $N$). Noting that the subspace spanned by Eisenstein series and the space of cusp forms are both preserved under the slash action, one may take the cuspidal parts of each side to conclude that $g_{L+\nu}|_{k,\chi_L}\gamma_d$ is the cusp form $g_{L+\mu}$ associated to $L+\mu$.

We then apply \eqref{eqn:norm1} with $f=g_{\chi}$. We have $M\ll N_LN^2$, $N_{i\infty}=1$, and
$\left[\SL_2(\Z):\Gamma\right]\ll N_L^{1+\varepsilon} N^{2+\varepsilon}.$ As pointed out by Duke, since the cusp width $N_{\sigma}$ for $\sigma=\frac{a}{c}$ in $\Gamma_0(M)$ is $M/(c^2,M)\leq M/c$ and $c\mid M$, we have $\mathcal{C}_{\Gamma_0(M)}\leq M$. Hence $\mathcal{C}_{\Gamma_0(N_{L}N^2)} \leq N_L N^2$ and we have
\[
\|g_{\chi}\|_{\Gamma_0\!\left(N_LN^2\right)}^2\ll C_{g_{\chi}}^2 N_{L}^{4r-n+5+\varepsilon}N^{8r-2n+10+\varepsilon}.
\]
Furthermore, since $a_{G_{L+\mu}}(\ell)=a_{\theta_{L+\mu}}(\ell)-a_{E_{L+\mu}}(\ell)$ and $a_{\theta_{L+\mu}}(\ell)\leq |O(L+\mu)|m(L+\mu)a_{E_{L+\mu}}(\ell)$, we have
\[
a_{g_L+\mu}(\ell_N)=a_{G_{L+\mu}}(\ell)\ll N_L^{(n+5)/2}N^{n-1+\varepsilon}a_{E_{L+\mu}}(\ell).
\]
Hence by \eqref{eqn:fchidef} and Theorem \ref{thm:EisensteinBounds} (together with the independence of $B(L)$ on $L$ observed in \eqref{eqn:Bndef})
\begin{equation}\label{eqn:Crbounds}
a_{g_{\chi}}(\ell_N) \ll B_n N_L^{(n+5)/2}\ell_N^{\frac{n}{2}-1+\varepsilon}.
\end{equation}
We may therefore choose $r=\frac{n}{2}-1+\varepsilon$ and $C_{g_{\chi}}\ll B_nN_L^{(n+5)/2}$ in \eqref{eqn:coeffbnd}. This yields
\[
\|g_{\chi}\|_{\Gamma_0\!\left(N_LN^2\right)}^2\ll B_n^2N_{L}^{2n+6+\varepsilon}N^{2n+2+\varepsilon}.
\]
We then plug back into \eqref{eqn:Deligne}, multiply by the square-root of the index from \eqref{eqn:indexmiddle} due to the change in the subgroup under which the inner product is being taken, and recall that there are $\varphi(N)\ll N $ characters $\chi$ to obtain
\begin{align*}
\left|a_{g_{L+\nu}}(h_N)\right|&\ll\sum_{\chi} |a_{g_{\chi}}(h_N)|\ll B_nN_{L}^{n+\frac{7}{2}+\varepsilon}N^{n+2+\varepsilon}\varphi(N)^{\frac{3}{2}}
h_N^{\frac{\frac{n}{2}-1}{2}+\varepsilon}\\
&\ll B_nN_{L}^{n+\frac{7}{2}+\varepsilon}N^{n+\frac{7}{2}+\varepsilon}h_N^{\frac{\frac{n}{2}-1}{2}+\varepsilon}.
\end{align*}
Thus we obtain, plugging back in $h_N=N^2h$,
\[
\left|a_{G_{L+\nu}}(h)\right|=\left|a_{g_{L+\nu}}(h_N)\right|\ll B_nN_{L}^{n+\frac{7}{2}+\varepsilon}N^{\frac{3n}{2}+\frac{5}{2}+\varepsilon}h^{\frac{\frac{n}{2}-1}{2}+\varepsilon}.
\]
\end{proof}

\section{Universal sums of polygonal numbers and the proof of Theorem \ref{thm:main}}\label{sec:universal}

In this section, we combine the results from Section \ref{sec:cusp} with Section \ref{sec:Eisenstein} to prove Theorem \ref{thm:main}.
\begin{proof}[Proof of Theorem \ref{thm:main}]
(1)  As the value of $\gamma_m$ has been determined when $m=3,4,5,6,8$, we assume that $m=7,9,10,\dots$. We construct escalator trees of generalized $m$-gonal sums up to depth $n_0=4$.  We note that there are no leaves of these partial trees by \cite[Theorem 1.1]{Sun}.  Specifically, for $m\geq 12$ these trees up to depth 4 are independent of $m$ and given by:
\[
\Tree
 [.[1] [.[1,1] [.[1,1,1] [.[1,1,1,k] $1\leq k\leq 4$ ]]  [.[1,1,2] [.[1,1,2,k] $2\leq k\leq 5$ ]] [.[1,1,3] [.[1,1,3,k] $3\leq k\leq 6$ ]]] [.[1,2] [.[1,2,2] [.[1,2,2,k] $2\leq k\leq 6$ ]]  [.[1,2,3] [.[1,2,3,k] $3\leq k\leq 7$ ]] [.[1,2,4] [.[1,2,4,k] $4\leq k\leq 8$ ]]]]
\]
For $m=7,9,10,11$, we can also obtain an explicit tree up to depth 4 (one can refer to \cite{Sun} for the truants), although the tree depends on the value of $m$. Therefore, for each even integer $n\geq 6$, there are only finitely many escalations of rank $n$, and we can check that these escalations
satisfy the conditions in Theorem \ref{thm:EisensteinBounds}.

In the following argument, we assume that $n\geq6$ is an even integer. For each $m$ and each choice of $a_{j,m}=[a_{j,m}^1,...,a_{j,m}^n]$ in the escalator tree, we find a bound $h_j(m)$ such that for $h>h_j(m)$, the corresponding shifted lattice $L_{j,m}+\nu_{j,m}$ represents $h$. As there are only finitely many $j$, we obtain
\[
\gamma_m\leq \max_{j} h_j(m).
\]
One key observation is that for $m$ sufficiently large (depending on $n$), the nodes $a_{j,m}$ are independent of $m$. For small $m$, we may bound $h_j(m)$ against a constant depending only on $n$, so we may assume that $a_{j,m}=a_j$ is independent of $m$, or in other words that the corresponding shifted lattice is $L_{j} +\nu_{j,m}$. The dependence of $h_j(m)$ therefore only lies in the shift $\nu_{j,m}$. This allows one to ignore the level $N_{L_j}$ of the lattice when computing the bound for $h_j(m)$, leading to a uniform bound depending only on $m$ and $n$.

We next show how to obtain the bound $h_j(m)$.  For each shifted lattice $L_{j,m}+\nu_{j,m}$ in the tree, the corresponding theta function $\theta_{L_{j,m}+\nu_{j,m}}$ counting the number of representations of $h$ by $L_{j,m}+\nu_{j,m}$ is a weight $n/2$ holomorphic modular form on some congruence subgroup $\Gamma$; the precise subgroup can be computed using \cite[Proposition 2.1]{Shimura}.

We then split
\[
\theta_{L_{j,m}+\nu_{j,m}}=E_{j,m}+G_{j,m},
\]
where $E_{j,m}$ is a weight $n/2$ Eisenstein series on $\Gamma$ and $G_{j,m}$ a weight $n/2$ cusp form on $\Gamma$.  By the Siegel--Weil theorem for shifted lattices (see \cite{Shimura2004} or \cite{vanderBlij}), the coefficients of $E_{j,m}$ are the products of local densities and simultaneously the weighted sums of numbers of representations by shifted lattices in the genus of $L_{j,m}+\nu_{j,m}$.

By Theorem \ref{thm:EisensteinBounds}, we have
\[
a_{E_{j,m}}(h)\gg_{\varepsilon,n}  m^{-1} h^{\frac{n}{2}-1-\varepsilon},
\]
while Proposition \ref{prop:cusppart} implies that
\[
a_{G_{j,m}}(h)\ll_{\varepsilon,n}m^{\frac{3n}{2}+\frac{5}{2}+\varepsilon} h^{\frac{\frac{n}{2}-1}{2}+\varepsilon}.
\]
Combining these, we see that the $h$th coefficient of $\theta_{L_{j,m}+\nu_{j,m}}$ is positive for
\[
h\gg_{\varepsilon,n}m^{6 + \frac{26}{n-2}+\varepsilon}.
\]
Finally, by \eqref{eqn:hdef}, we conclude that $\ell$ is represented by $f$ if
\[
\ell \gg m^{7+\frac{26}{n-2}+\varepsilon}.
\]
We next take $n=n_{\varepsilon}$ sufficiently large such that $\frac{26}{n-2}<\varepsilon$ to obtain the claim; note that since this is independent of $m$, the implied dependence on the lattice only depends on $\varepsilon$. However, in order to take $n$ sufficiently large, it is necessary that (for $m$ sufficiently large) none of the nodes of depth $n<n_{\varepsilon}$ are universal. Since $P_m(x)$ only takes the values $0$, $1$ and integers greater than or equal to $m-3$, there are at most $2^n$ integers less than $m-3$ which are represented by the sum \eqref{eqn:sumsmgonal}. Hence if $2^n<m-4$, we see that \eqref{eqn:sumsmgonal} is not universal. We conclude that the minimal depth of a leaf in the tree is depth at least $\log(m-4)$, which goes to infinity as $m\to \infty$. Hence for $m$ sufficiently large the depth of a leaf is larger than $n_{\varepsilon}$ in particular, and we conclude the claim.

(2)  Although this argument is contained in \cite{Guy}, we provide it for the convenience of the reader. Note that the number $1\leq \ell\leq m-4$ is not represented as a sum of $\ell-1$ generalized $m$-gonal numbers.  Since every integer is the sum of at most $m$ $m$-gonal numbers (as conjectured by Fermat and proven by \cite{Cauchy}), the form $\sum_{j=\ell}^{\ell+m-1} (\ell+1)P_m(x_j)$ represents every positive integer divisible by $\ell+1$, and thus
\[
\sum_{j=1}^{\ell-1} P_{m}(x_j) + \sum_{j=\ell}^{\ell+m-1} (\ell+1)P_m(x_j) + (2\ell+1)P_{m}(x_{\ell+m})
\]
represents every positive integer other than $\ell$.  We therefore obtain that $\gamma_{m}\geq \ell$.  Since $m$ is arbitrary and the only restriction is $\ell\leq m-4$, we see that for every $\ell\in\N$ there exists a sum of polygonal numbers which represents precisely every nonnegative integer other than $\ell$, and thus there is no uniform bound.
\end{proof}

\begin{remark}
Note that Guy was mainly interested in the smallest dimension for which the sum of polygonal numbers with $a_j=1$ is universal.  In other words, in our setting, Guy obtained a lower bound for the depth of the escalator tree, showing that the depth is at least $m-4$.   His result, combined with the fact that every positive integer is the sum of at most $m$ generalized $m$-gonal numbers implies that the actual depth under this particular restriction is between $m-4$ and $m$, giving a very tight bound.  There exist examples where Guy's lower bound is sharp; for example, Sun \cite{SunLagrange} has shown that every integer is the sum of four generalized octagonal numbers. Moreover, it was seen in the proof of Theorem \ref{thm:main} (1) that the minimal depth of a universal sum (i.e., a leaf of the tree) is at least $\log(m-4)$. It might be interesting to investigate the depth of other branches of the escalator tree.
\end{remark}

\section*{acknowledgments}
The authors would like to express their gratitude to Dr. Yuk-Kam Lau for his warm help and to the referees for their valuable comments.

\end{document}